\newcommand{\R}{\mathds{R}}
\newcommand{\Z}{\mathds{Z}}
\newcommand{\N}{\mathds{N}}
\newcommand{\C}{\mathds{C}}
\newcommand{\F}{\mathds{F}}
\newcommand{\M}{\mathcal{M}}
\renewcommand{\l}{$\ell_{1}$}
\newcommand{\st}{\quad\text{s.t.}\quad}
\newcommand{\norm}[1]{\lVert {#1} \rVert}
\newcommand{\abs}[1]{\lvert {#1} \rvert}
\newcommand{\suchthat}{\,:\,}
\newcommand{\define}{\coloneqq}
\newcommand{\NP}{\textsf{NP}}
\renewcommand{\P}{\textsf{P}}
\newcommand{\ones}{\mathds{1}}
\newcommand{\cH}{\mathcal{H}}
\newcommand{\eps}{\varepsilon}
\newcommand{\cardmin}[1]{$\ell_{0}$-\textsc{min}({#1})} % X
\newcommand{\cardcons}[3]{$\ell_{0}$-\textsc{cons}({#1},\,{#2},\,{#3})} % f, k, X
\newcommand{\cardreg}[2]{$\ell_{0}$-\textsc{reg}({#1},\,{#2})} % rho, X
\newcommand{\tagcardmin}[1]{\mbox{$\ell_{0}$-\textsc{min}({#1})}} % X
\newcommand{\tagcardcons}[3]{\mbox{$\ell_{0}$-\textsc{cons}({#1},\,{#2},\,{#3})}} % f, k, X
\newcommand{\tagcardreg}[2]{\mbox{$\ell_{0}$-\textsc{reg}({#1},\,{#2})}} % rho, X
\DeclareMathOperator{\spark}{spark}
\DeclareMathOperator{\cospark}{cospark}
\DeclareMathOperator{\girth}{girth}
\DeclareMathOperator{\cogirth}{cogirth}
\DeclareMathOperator{\rank}{rank}
\DeclareMathOperator{\supp}{supp}
\DeclareMathOperator{\sign}{sign}
\DeclareMathOperator{\Diag}{Diag}
\DeclareMathOperator{\diag}{diag}
\title{Cardinality Minimization, Constraints, and Regularization: A Survey%
  \thanks{June 17, 2021; August 5, 2022.%Submitted to the editors June 17, 2021; revised August 5, 2022.
%\funding{}
  }}
\author{Andreas M. Tillmann\thanks{TU Braunschweig, Institute for
    Mathematical Optimization, Braunschweig, Germany
    (\email{a.tillmann@tu-bs.de}).}
  \and Daniel Bienstock\thanks{Columbia University, Dept.\ of Industrial
    Engineering and Operations Research, New York, NY, USA
    (\email{dano@columbia.edu}).}
  \and Andrea Lodi\thanks{\'{E}cole Polytechnique Montr\'{e}al, CERC Data
    Science for Real-Time Decision-Making, Montr\'{e}al, QC, Canada
    (\email{andrea.lodi@polymtl.ca}).}
  \and Alexandra Schwartz\thanks{TU Dresden, Faculty of Mathematics,
    Dresden, Germany
    (\email{alexandra.schwartz@tu-dresden.de}).}
  }
\begin{document}
  \maketitle

  % max 250 words
%  \input{00_abstract.tex}
\begin{abstract}
  We survey optimization problems that involve the cardinality of variable
  vectors in constraints or the objective function. We provide a unified
  viewpoint on the general problem classes and models, and give concrete
  examples from diverse application fields such as signal and image
  processing, portfolio selection, or machine learning. The paper discusses
  general-purpose modeling techniques and broadly applicable as well as
  problem-specific exact and heuristic solution approaches. While our
  perspective is that of mathematical optimization, a main goal of this
  work is to reach out to and build bridges between the different
  communities in which cardinality optimization problems are frequently
  encountered. In particular, we highlight that modern mixed-integer
  programming, which is often regarded as impractical due to commonly
  unsatisfactory behavior of black-box solvers applied to generic problem
  formulations, can in fact produce provably high-quality or even optimal
  solutions for cardinality optimization problems, even in large-scale
  real-world settings. Achieving such performance typically draws on the
  merits of problem-specific knowledge that may stem from different fields
  of application and, e.g., shed light on structural properties of a model
  or its solutions, or lead to the development of efficient heuristics; we
  also provide some illustrative examples.
\end{abstract}
  
  % REQUIRED
  \begin{keywords}
    sparsity, 
    cardinality constraints,
    regularization,
    mixed-integer programming,
    signal processing,
    portfolio optimization,
    regression,
    machine learning
  \end{keywords}
  
  % REQUIRED
  \begin{AMS}
    90-02, %Operations research, mathematical programming -> Research exposition (monographs, survey articles) pertaining to operations research and mathematical programming
    90C05, %Operations research, mathematical programming -> Mathematical programming -> Linear programming
    90C06, %Operations research, mathematical programming -> Mathematical programming -> Large-scale problems in mathematical programming
    90C10, %Operations research, mathematical programming -> Mathematical programming -> Integer programming
    90C11, %Operations research, mathematical programming -> Mathematical programming -> Mixed integer programming
    90C26, %Operations research, mathematical programming -> Mathematical programming -> Nonconvex programming, global optimization
    90C30, %Operations research, mathematical programming -> Mathematical programming -> Nonlinear programming
    90C33, %Operations research, mathematical programming -> Mathematical programming -> Complementarity and equilibrium problems and variational inequalities (finite dimensions) (aspects of mathematical programming)
    90C59, %Operations research, mathematical programming -> Mathematical programming -> Approximation methods and heuristics in mathematical programming
    90C90, %Operations research, mathematical programming -> Mathematical programming -> Applications of mathematical programming
    62J07, %Statistics -> Linear inference, regression -> Ridge regression; shrinkage estimators (Lasso)
    68T99, %Computer Science -> Artificial Intelligence -> "None of the above, but in this section"
    94A12, %Information and communication theory, circuits -> Communication, information -> Signal theory (characterization, reconstruction, filtering, etc.)
    91G10  %Game theory, economics, finance, and other social and behavioral sciences -> Actuarial science and mathematical finance -> Portfolio theory
  \end{AMS}

\section{Introduction}\label{sec:intro}
The cardinality of variable vectors occurs in a plethora of optimization
problems, in either constraints or the objective function. In the
following, we attempt to describe the broad landscape of such problems with
a general emphasis on \emph{continuous} variables. This restriction serves
as a natural distinguishing feature from a myriad of classical operations
research or combinatorial optimization problems, where ``cardinality''
typically appears in the form of minimizing or limiting the number of some
objects associated with (non-auxiliary, i.e., structural) binary decision
variables. Cardinality restrictions on general variables are thus of a
decidedly different flavor, and also require different modeling and
solution techniques than those immediately available in the binary case.

The general classes of problems we are interested in can be formalized as
follows:
\begin{itemize}
  \item \textbf{Cardinality Minimization Problems}
    \begin{equation}\label{cardmin}
      \min~\norm{x}_{0} \st x\in X\subset{\R}^{n}, \tag{\tagcardmin{$X$}}
    \end{equation}
  \item \textbf{Cardinality-Constrained Problems}
    \begin{equation}\label{cardcons}
      \min~f(x) \st \norm{x}_0\leq k,\quad x\in X\subseteq\R^n, \tag{\tagcardcons{$f$}{$k$}{$X$}}
    \end{equation}
  \item \textbf{Regularized Cardinality Problems}
    \begin{equation}\label{cardregul}
      \min~\norm{x}_0 + \rho(x) \st x\in X\subseteq\R^n,  \tag{\tagcardreg{$\rho$}{$X$}}
    \end{equation}
\end{itemize}
where we use $\norm{x}_0\define\abs{\supp(x)}=\abs{\{j:x_j\neq 0\}}$ (the
so-called ``$\ell_0$-norm''), $f:\R^n\to\R$, $k\in\N$, and
$\rho:\R^n\to\R_+$. The set $X$ in any of these problems can be used to
impose further constraints on~$x$. For simplicity, we will usually simply
write out the constraints rather than fully state the corresponding
set~$X$; e.g., we may write \cardcons{$f$}{$k$}{$g(x)\leq 0$} instead of
\cardcons{$f$}{$k$}{$\{x\in\R^n:g(x)\leq 0\}$}.

Most concrete problems we will discuss belong to one of these three
classes, although we will also encounter variations and extensions.
Indeed, very similar problems may arise in very different fields of
application, sometimes resulting in some methodology being reinvented or
researchers being generally unaware of relevant results and developments
from seemingly disparate communities. Moreover, the incomplete transfer of
knowledge between different disciplines may prevent progress in the
resolution of some problems that could strongly benefit of new approaches
for similar problems, developed with completely different applications in
mind. With this document, we hope to provide a useful roadmap connecting
several disciplines and offering an overview of the many different
computational approaches that are available for cardinality optimization
problems. Note that a similar overview was given a couple of years ago
in~\cite{SZL13}, but with a much more limited scope of cardinality problems
and their aspects than we consider here (albeit discussing the related case
of semi-continuous variables in more detail, in particular associated
perspective reformulations, which we mostly skip). Moreover, significant
advances have been achieved in just these past few years, which we include
in this survey.

To emphasize the cross-disciplinary nature of many of the cardinality
optimization problem classes and to provide a clear reference point for
members of different communities to recognize their own problem of interest
in this survey, we will group our overview of various concrete such
problems according to the respective application areas and point out
overlaps and differences. The solution methods we shall discuss cover both
exact and heuristic approaches; our own mathematical programming
perspective tends to favor exact models and algorithms that can provide
provable guarantees on solution quality, a stance that appears to be less
commonly taken in practical applications. This is ``a feature, not a bug''
of the present paper---we hope to bring across that in many cases, mixed-integer programming (MIP) offers an attractive alternative to widely-used heuristic methods. Generally, a typical first step in that direction is experimenting with off-the-shelf solvers to tackle basic MIP formulations. Depending on the application, this may already work very well, especially when solution quality is more important than speed. Importantly, MIP solvers also provide
certifiable error bounds of the computed solution w.r.t.\ the optimum if
terminated prematurely (e.g., when imposing a runtime limit), in contrast to many heuristic methods without general quality guarantees that
are commonly employed in various applications. Moreover, improvement to optimality is often not hopeless and can be achieved either by simply allowing more solving time, or by improving the underlying mathematical model formulation and/or incorporating knowledge of the problem at hand into the MIP solver. Thus, as subsequent steps to substantially improve speed and scalability of MIP approaches, it is worth revisiting the model and guiding or enhancing the MIP solver by customizing existing (and/or adding new problem-specific) algorithmic components---a fact we will document with some examples.

We organize the subsequent discussion as follows: In the remainder of this introductory section, we will clarify some relationships between the main problem classes and fix our notation. Then, in Section~\ref{sec:concreteprobs}, we describe the most common different realizations of the above problems \cardmin{$X$}, \cardcons{$f$}{$k$}{$X$}, and \cardreg{$\rho$}{$X$} as they occur in diverse fields like signal processing, compressed sensing, portfolio opti\-mization, and machine
learning; some further related problems are also discussed. In
Section~\ref{sec:exactopt}, we summarize exact modeling techniques (in
particular, mixed-integer linear and nonlinear programming) and algorithmic approaches from the literature, and provide some exemplary numerical experiments to illustrate how the sometimes unsatisfactory performance of general-purpose models and MIP solvers may be significantly improved by some advanced modeling tricks and,
especially, by integrating problem-specific knowledge and heuristic
methods. This is followed in Section~\ref{sec:inexactopt} by reviewing the
plethora of proposed relaxations, regularization, and heuristic schemes,
including popular $\ell_1$-norm and atomic norm minimization as well as greedy methods. Finally, in Section~\ref{sec:scalability}, we address scalability aspects of exact and approximate/heuristic algorithms, and then conclude the paper in Section~\ref{sec:conclusion}.

Moreover, Table~\ref{tab:pointersForBaseProblems} provides an
alternative overview meant to facilitate navigating this document if one is primarily interested in one specific problem. Since this paper covers too many different problems to provide such an overview for all of them, we do so exemplarily for three of the most-widely used problems, and note that the pointers to topics and locations given for these should also be helpful for many other related problems as well. Specifically, Table~\ref{tab:pointersForBaseProblems} covers \cardmin{$\norm{Ax-b}_2\leq\delta$}, \cardcons{$\norm{Ax-b}_2$}{$k$}{$\R^n$}, and \cardreg{$\tfrac{1}{2\lambda}\norm{Ax-b}_2^2$}{$\R^n$}, which will be formally introduced first in Section~\ref{sec:concreteprobs:signalprocessing} in the context of signal processing, but also appear in virtually all other application areas, 
and can be seen as the ``base problems'' for various related variants and extensions.

\begin{table}[hbtp]
  \begin{center}
  \caption{\footnotesize\itshape Some pointers to locations in this survey where information for the exemplary problems \mbox{\normalfont\cardmin{$\norm{Ax-b}_2\leq\delta$},} {\normalfont\cardcons{$\norm{Ax-b}_2$}{$k$}{$\R^n$}}, and {\normalfont\cardreg{$\tfrac{1}{2\lambda}\norm{Ax-b}_2^2$}{$\R^n$}} can be found. Section~\ref{sec:exactopt:modelingcard} provides several reformulations of cardinality that are applicable to all problems; scalability of algorithms is discussed in Section~\ref{sec:scalability}.\hfill~}
  \label{tab:pointersForBaseProblems}
  \footnotesize % SIAM style guide: tables are set in 8pt... 
  \begin{tabular*}{\textwidth}{@{\extracolsep{\fill}}@{\quad}l@{\quad}l@{\quad}p{0.57\textwidth}@{\quad}}
        \toprule
        problem & location & information\\
        \midrule
        \cardmin{$\norm{Ax-b}_2\leq\delta$} &
        %Sect. \ref{sec:exactopt:modelingcard} & several reformulations of cardinality \\
        Sect. \ref{sec:exactopt:cardmin} & exact solution methods, mostly based on mixed-integer programming   \\
        &Sect. \ref{sec:inexactopt:surrogates} & $\ell_1$-surrogate problem BPDN($\delta,\R^n$) and solution approaches, e.g., \hyperref[par:homotopyMethods]{homotopy methods}, \hyperref[par:ADMM]{ADMM}, and \hyperref[par:smoothingTechniques]{smoothing techniques}\\
        &Sect. \ref{sec:inexactopt:approximation_objective} & heuristics based on nonconvex approximations (but only considering related constraints)\\
        &Sect. \ref{sec:inexactopt:greedy_heuristics} & other heuristics such as \hyperref[par:otherPursuitGreedySchemes]{subspace pursuit}  \\ &&\\        \cardcons{$\norm{Ax-b}_2$}{$k$}{$\R^n$} &
        %Sect. \ref{sec:exactopt:modelingcard} & several reformulations of cardinality  \\
        Sect. \ref{sec:exactopt:cardcons} & exact solution methods, mostly based on mixed-integer programming  \\
        &Sect. \ref{sec:inexactopt:surrogates} & $\ell_1$-surrogate problem LASSO($\tau,\R^n$) and solution approaches, e.g., a \hyperlink{SPGL1}{spectral projected gradient method} and \hyperref[par:smoothingTechniques]{smoothing techniques}\\
        &Sect. \ref{sec:inexactopt:relaxation_constraints} & heuristics based on nonconvex formulations  \\
        &Sect. \ref{sec:inexactopt:greedy_heuristics} & other heuristics such as \hyperref[par:hardThresholding]{iterative hard-thresholding} and \hyperref[par:matchingPursuit]{matching pursuit} \\ &&\\
        \cardreg{$\tfrac{1}{2\lambda}\norm{Ax-b}_2^2$}{$\R^n$} &
        %Sect. \ref{sec:exactopt:modelingcard} & several reformulations of cardinality \\
        Sect. \ref{sec:exactopt:cardreg} & theoretical discussion of the general problem~\cardreg{$\rho$}{$X$}, some exact solution methods for special cases, possibilities for \cardreg{$\tfrac{1}{2\lambda}\norm{Ax-b}_2^2$}{$\R^n$} \\
        &Sect. \ref{sec:inexactopt:surrogates} & $\ell_1$-surrogate problem $\ell_1$-LS($\lambda,\R^n$) and solution approaches, e.g., \hyperref[par:ISTA]{iterative soft-thresholding} and \hyperlink{SMNewt}{semismooth Newton methods}\\
        &Sect. \ref{sec:inexactopt:relaxation_constraints} & heuristics based on nonconvex reformulations \\
        &Sect. \ref{sec:inexactopt:greedy_heuristics} & other heuristics such as \hyperref[par:hardThresholding]{iterative hard-thresholding}  \\
        \bottomrule
    \end{tabular*}    
  \end{center}
\end{table}

\subsection{Relationships Between Main Problem Classes}\label{sec:intro:cardpros_relations}
At least in some communities, it appears to be folklore knowledge that problems belonging to the classes \cardmin{$X$}, \cardcons{$f$}{$k$}{$X$}, or \cardreg{$\rho$}{$X$} can sometimes be equivalent in the sense that they share optimal solutions under certain assumptions on the cardinality, constraint and regularization parameters.
Indeed, the fact that in widely-used surrogate models like $\ell_1$-norm problems, such equivalences always hold for the right parameter choices (cf.~Section~\ref{sec:inexactopt:surrogates}), might mislead one to presume the same is true for the $\ell_0$-based problems. However, this is generally not the case. We formalize (non-)equivalence statements for the three main classes of cardinality problems in the following result, where we let $\tagcardmin{$\delta$}\define \min\{\norm{x}_0\,:\,f(x)\leq\delta,~x\in X\}$ be the typical slight variation of the cardinality minimization problem that most naturally relates to the other problem classes; to simplify notation, we also abbreviate $\ell_0$-\textsc{cons}$(k)\define\tagcardcons{$f$}{$k$}{$X$}$, and $\ell_0$-\textsc{reg}$(\lambda)\define\tagcardreg{$\tfrac{1}{\lambda}f$}{$X$}$.

\begin{prop}\label{prop:cardprobs_relations}\
Let $\lambda>0$, $\delta\geq 0$, $X\subseteq\R^n$, and $f:\R^n\to\R$.
    \begin{enumerate}
    \item\label{prop:cardprobs:i} If $x^*$ is an optimal solution of $\ell_0$-\textsc{reg}$(\lambda)$, then it also optimally solves $\ell_0$-\textsc{cons}$(k)$ for $k=\norm{x^*}_0$ and \cardmin{$\delta$} for $\delta=f(x^*)$. The reverse implications are not true in general.
    \item\label{prop:cardprobs:ii} If all optimal solutions $x^*$ of $\ell_0$-\textsc{cons}$(k)$ have the same cardinality~$\norm{x^*}_0$, then they all also solve \cardmin{$\delta$} for $\delta=f(x^*)$. The equal-cardinality assumption cannot be dropped in general.
    \item\label{prop:cardprobs:iii} If all optimal solutions $x^*$ of \cardmin{$\delta$} have the same function value~$f(x^*)$, then they all also solve $\ell_0$-\textsc{cons}$(k)$ for $k=\norm{x^*}_0$. The equal-value assumption cannot be dropped in general.
    \end{enumerate}
\end{prop}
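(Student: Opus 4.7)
The plan is to tackle each item by a short optimality argument for the direct implication, paired with a small explicit counterexample for each ``not true in general'' assertion.

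For item~(\ref{prop:cardprobs:i}), let $x^*$ optimally solve $\ell_0$-\textsc{reg}$(\lambda)$, which minimizes $\|x\|_0+f(x)/\lambda$ over $X$. To show $x^*$ also solves $\ell_0$-\textsc{cons}$(\|x^*\|_0)$, I would suppose for contradiction that some $y\in X$ with $\|y\|_0\le\|x^*\|_0$ has $f(y)<f(x^*)$; then $\|y\|_0+f(y)/\lambda<\|x^*\|_0+f(x^*)/\lambda$, contradicting optimality. A fully symmetric argument handles \cardmin{$f(x^*)$}. For the failure of the reverse implications, I would exhibit a small finite $X$ whose Pareto frontier in the $(\|x\|_0,f)$-plane has a ``concave kink'' invisible to any linear scalarization: concretely, four points (easily realized in $\R^3$) with $(\|x\|_0,f)$-values $(0,10)$, $(1,5)$, $(2,4)$, $(3,0)$. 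The third point optimally solves both $\ell_0$-\textsc{cons}$(2)$ and \cardmin{$4$}, yet it can never minimize $\|x\|_0+f(x)/\lambda$, because the inequalities it must satisfy against $(1,5)$ and $(3,0)$ simplify to $\lambda\le 1$ and $\lambda\ge 4$ simultaneously.

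For item~(\ref{prop:cardprobs:ii}), let $x^*$ solve $\ell_0$-\textsc{cons}$(k)$ with $\|x^*\|_0=k^*\le k$. If $x^*$ did not solve \cardmin{$f(x^*)$}, some $y\in X$ would satisfy $f(y)\le f(x^*)$ and $\|y\|_0<k^*$; then $y$ is feasible for $\ell_0$-\textsc{cons}$(k)$, also optimal there, but of strictly smaller cardinality, violating the equal-cardinality hypothesis. To see this hypothesis is essential, $X=\{(0,0),(1,-1)\}\subset\R^2$ with $f\equiv 0$ works: both points solve $\ell_0$-\textsc{cons}$(2)$ (at cardinalities $0$ and $2$), yet $(1,-1)$ plainly fails to solve \cardmin{$0$}. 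Item~(\ref{prop:cardprobs:iii}) is the symmetric twin: if $x^*$ solves \cardmin{$\delta$} with cardinality $k^*$ and some $y\in X$ had $\|y\|_0\le k^*$ and $f(y)<f(x^*)\le\delta$, then $y$ would also solve \cardmin{$\delta$} but with strictly smaller $f$-value, contradicting equal-value. A matching counterexample: $X=\{(1,0),(0,1)\}\subset\R^2$ with $f(x)=x_1$; both points solve \cardmin{$1$}, but $(1,0)$ does not solve $\ell_0$-\textsc{cons}$(1)$ since $(0,1)$ has strictly smaller $f$.

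The only genuinely delicate step is the reverse direction of item~(\ref{prop:cardprobs:i}). The three forward arguments are essentially one-line contradictions, and the two-point counterexamples for (\ref{prop:cardprobs:ii}) and (\ref{prop:cardprobs:iii}) are immediate. In contrast, ruling out \emph{every} $\lambda>0$ at once for a single $\ell_0$-\textsc{cons} or \cardmin{} optimum requires recognizing the right geometric pattern---a Pareto point lying strictly above the lower convex hull of the other feasible points in $(\|x\|_0,f)$-space---and pinning down numerical values so that the exclusion is unambiguous.
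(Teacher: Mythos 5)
Your proof is correct and follows essentially the same route as the paper: one-line scalarization inequalities for the positive claims (the paper writes them as a direct chain, you as a contrapositive), and for the failure of the reverse implication in part~\ref{prop:cardprobs:i} the same geometric phenomenon of a Pareto point in $(\norm{x}_0,f)$-space lying strictly above the lower convex hull---the paper realizes it with a concrete $2\times 2$ least-squares instance with value pairs $(0,1),(1,\tfrac45),(2,0)$, you with an abstract four-point set, and both counterexamples check out. You also supply the proofs of the positive statements in parts~\ref{prop:cardprobs:ii} and~\ref{prop:cardprobs:iii} that the paper explicitly skips, and your two-point counterexamples there are valid (if even more minimal than the paper's quadratic examples over $X=\R^2$).
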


\begin{proof}
First, let $x^*$ solve $\ell_0$-\textsc{reg}$(\lambda)$. Then, for all $x\in X$ with $\norm{x}_0\leq k=\norm{x^*}_0$, it holds that
$%\[
    f(x^*)= f(x^*)+\lambda(\norm{x^*}_0-k)\leq f(x)+\lambda(\norm{x}_0-k)\leq f(x)
$%\]
, and for all $x\in X$ with $f(x)\leq\delta=f(x^*)$, we have
$
    \norm{x^*}_0 = \norm{x^*}_0+\tfrac{1}{\lambda}(f(x^*)-\delta)\leq \norm{x}_0+\tfrac{1}{\lambda}(f(x)-\delta)\leq\norm{x}_0,
$
 which shows that $x^*$ solves both $\ell_0$-\textsc{cons}$(\norm{x^*}_0)$ and \cardmin{$f(x^*)$} as claimed. To show that the reverse implications do not hold in general, consider the case $X=\R^2$ and $f(x)=\norm{Ax-b}_2^2$ with
\[
A=\begin{pmatrix}0&1\\1&2\end{pmatrix}\quad\text{and}\quad b=\begin{pmatrix}1\\0\end{pmatrix}.
\]
Then, in particular, $\hat{x}_1=(0,\tfrac{1}{5})^\top$ optimally solves both $\ell_0$-\textsc{cons}$(1)$ and \cardmin{$\tfrac{4}{5}$}; $\ell_0$-\textsc{cons}$(0)$ is solved by $\hat{x}_0=(0,0)^\top$ with $f(\hat{x}_0)=1$ and $\ell_0$-\textsc{cons}$(2)$ by $\hat{x}_2=(-2,1)^\top$ with $f(\hat{x}_2)=0$. Thus, the optimal value of $\ell_0$-\textsc{reg}$(\lambda)$ as a function of $\lambda>0$ is
\[
\min\big\{ \tfrac{0}{\lambda}+2, \tfrac{4}{5\lambda}+1, \tfrac{1}{\lambda}+0\big\}=\begin{cases}2, &\lambda\in(0,\tfrac{1}{2}],\\\tfrac{1}{\lambda}, &\lambda\in[\tfrac{1}{2},\infty).\end{cases}
\]
For $\lambda\in(0,\tfrac{1}{2})$, the solution to $\ell_0$-\textsc{reg}$(\lambda)$ is $\hat{x}_2$, for $\lambda=\tfrac{1}{2}$, both $\hat{x}_0$ and $\hat{x}_2$ are optimal, and for $\lambda>\tfrac{1}{2}$, only $\hat{x}_0$ is. This means that $\hat{x}_1$ cannot by recovered by $\ell_0$-\textsc{reg}$(\lambda)$ for \emph{any} $\lambda>0$, which concludes the proof of statement~\ref{prop:cardprobs:i}.

We skip the straightforward proofs of the positive statements in~\ref{prop:cardprobs:ii} and~\ref{prop:cardprobs:iii}. To show that these  implications are not true without the respective assumptions, let $X=\R^2$ and first consider $f(x)=x_1^2$. Then, any $x^*=(0,c)^\top$ with $c\in\R$ optimally solves $\ell_0$-\textsc{cons}$(1)$, but not \cardmin{$0$} unless $c=0$.
Now, let $f(x)=(x_1-2)^2$. Then, $\hat{x}_1=(1,0)^\top$ and $\hat{x}_2=(2,0)^\top$ are both optimal for \cardmin{$1$}, but $\hat{x}_1$ does not solve $\ell_0$-\textsc{cons}$(1)$.
\end{proof}

Note that points~\ref{prop:cardprobs:ii} and~\ref{prop:cardprobs:iii} of Proposition~\ref{prop:cardprobs_relations} imply equivalence of $\ell_0$-\textsc{cons}$(k)$ and \cardmin{$\delta$}, for the appropriate values of $k$ and $\delta$, in case of \emph{solution uniqueness}, which is often an important desideratum (e.g., for signal reconstruction). However, the parameter values that yield such an equivalence are typically not known a priori.

\subsection{Notation}\label{sec:intro:notation}
We let $\R_+$ denote the set of nonnegative real numbers. For a natural
number $n\in\N$, we abbreviate $[n]\define\{1,2,\dots,n\}$. The complement
of a set $S\subset T$ is denoted by $S^c$. The cardinality of a vector $x$
is denoted as $\norm{x}_0\define\abs{\supp(x)}=\abs{\{i:x_i\neq 0\}}$,
where $\supp(x)$ is its support (i.e., index set of nonzero entries). The
standard $\ell_p$-norm (for $1\leq p <\infty$) of a vector $x\in\R^n$ is
defined as $\norm{x}_p\define(\sum_{i=1}^{n}|x_i|^p)^{1/p}$, and
$\norm{x}_\infty\define\max|x_i|$. For a matrix $A$, $\norm{A}_\text{F}$
denotes is Frobenius norm, and $A_i$ its $i$-th column. For a set $S$ and a
vector $x$ or matrix~$A$, $x_S$ and $A_S$ denote the vector restricted to
indices in $S$ or the column-submatrix induced by $S$, respectively. We use
$\ones$ to denote an all-ones vector, and~$I$ to denote the identity
matrix, of appropriate dimensions. A superscript $\top$ denotes
transposition (of a vector or matrix). A diagonal matrix build from a
vector $z$ is denoted by $\Diag(z)$, and conversely, $\diag(Z)$ extracts
the diagonal of a matrix $Z$ as a vector. For vectors $\ell,u\in\R^n$, we
sometimes abbreviate $\ell\leq x\leq u$ (i.e., $\ell_i\leq x_i\leq u_i$ for
all $i\in[n]$) as $x\in[\ell,u]$, extending the standard interval notation
to vectors.

\section{Prominent Cardinality Optimization Problems}\label{sec:concreteprobs}

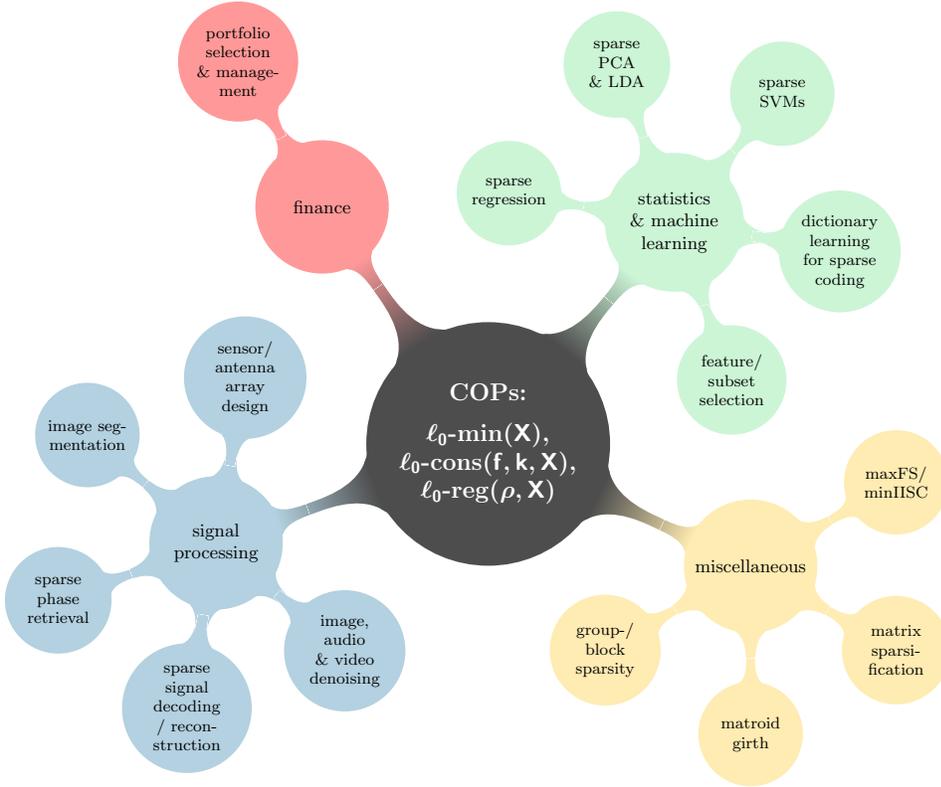
\begin{figure}[t]
  \centering
  \begin{tikzpicture}[scale=0.77,every node/.style={transform shape}]
  \path[mindmap,concept color=black!70, level 1 concept/.append style={sibling angle=75}]
    node[concept,text=white] {\textbf{COPs:}\\[0.5em]\textbf{\boldmath\cardmin{$X$},}\\\textbf{\boldmath\cardcons{$f$}{$k$}{$X$},}\\\textbf{\boldmath\cardreg{$\rho$}{$X$}}}
    [clockwise from=200]
    child[concept color=blue!60!green!30] {
      node[concept] {signal processing}
      [clockwise from=-40]
      child { node[concept] {image, audio \& video denoising} }
      child { node[concept] {sparse signal decoding / reconstruction} }
      child { node[concept] {sparse phase retrieval} }
      child { node[concept] {image segmentation} }
      child { node[concept] {sensor/ antenna array design} }
    }  
    child[concept color=red!40] {
      node[concept] {finance}
      [clockwise from=120]
      child { node[concept] {portfolio selection \& manage\-ment} }
    }
    child[concept color=green!80!blue!20] { node[concept] {statistics \& machine learning} 
      [clockwise from=-190]
      child { node[concept] {sparse regression} }
      child { node[concept] {sparse PCA \& LDA} }
      child { node[concept] {sparse SVMs} }
      child { node[concept] {dictionary learning for sparse coding} }
      child { node[concept] {feature/ subset selection} }
    }
    child[concept color=orange!50!yellow!30] { node[concept] {miscellaneous}
      [clockwise from=30]
      child { node[concept] {maxFS/ minIISC} }
      child { node[concept] {matrix sparsification} }
      child { node[concept] {matroid girth} }
      child { node[concept] {group-/ block sparsity} }
    };
  \end{tikzpicture}
  \caption{\footnotesize\itshape An overview of broader application areas and exemplary problems
    therein that share significant interests in cardinality
    optimization. Overlaps of concrete problem types across fields are
    quite common; for instance, cardinality-constrained Markowitz portfolio
    selection can be rewritten in the form of a constrained sparse
    regression problem (cf.~\cite{BCW18}), which in turn is of the same
    class as certain signal denoising/reconstruction models, see
    Sections~\ref{sec:concreteprobs:signalprocessing}--\ref{sec:concreteprobs:statML}
    for details.}
  \label{fig:applications_mindmap}
\end{figure}

Cardinality optimization problems (COPs, for short) abound in several
different areas of application, such as medical imaging (e.g., X-ray
tomography), face recognition, wireless sensor network design,
stock-picking, crystallography, astronomy, computer vision, classification
and regression, interpretable machine learning, or statistical data
analysis, to name but a few. In this section, we highlight the most
prominent realizations of such problems. To facilitate ``mapping'' concrete
problems to concrete applications, we structure the section according to
the three broad fields in which cardinality optimization problems are
encountered most frequently: signal and image processing, portfolio
optimization and management, and high-dimensional statistics and machine
learning; further related COPs and extensions are gathered in a final
subsection. Along these lines, a first broad overview of applications is
provided in Figure~\ref{fig:applications_mindmap}.

\subsection{Signal and Image Processing}\label{sec:concreteprobs:signalprocessing}
In the broad field of signal processing, it has been found that signal
sparsity can be exploited beneficially in several tasks, e.g., to remove
noise from image or audio data or to reduce the amount of measurements
needed to faithfully reconstruct signals from observations. In particular,
the advent of \emph{compressed sensing} (see~\cite{FR13} for a thorough
introduction) has sparked a tremendous interest in several core cardinality
optimization problems in the past 15~years or so.

At first, the focus was on reconstruction from linear measurements ($b=Ax$), but research quickly also expanded to different nonlinear settings. We will discuss the respective fundamental sparse recovery tasks in Sections~\ref{sec:concreteprobs:signalprocessing:canonical-linear} and~\ref{sec:concreteprobs:signalprocessing:canonical-nonlinear} below; Section~\ref{sec:concreteprobs:signalprocessing:dictionary} covers important generalizations of the main sparsity concept. 

Before we get started, a brief remark on the measurement matrices~$A$ seems in order:
In signal processing applications, $A$ is typically not fully generic but assumes certain forms and properties arising from an underlying physical \emph{measurement} model or setup. Also, much of the theory for efficient solvability (see, e.g., Section~\ref{sec:inexactopt:surrogates:theory}) relies on properties of~$A$ that hold with high probability for certain random matrices. Thus, in signal processing and, in particular, compressed sensing, one often encounters matrices such as Fourier transforms, Gaussian or Bernoulli matrices---sometimes combined with binary masks to blot out random entries, or otherwise modified. In contrast, we note that in other areas of application for the problems introduced in the following (or related tasks), the matrix~$A$ is often comprised of \emph{observational} data (e.g., in finance, regression or machine learning), which is typically unstructured and rarely beholden to specific probability distributions. This distinction may be partially responsible for the many different approaches found across disciplines.

\subsubsection{Sparse Recovery From Linear Measurements}\label{sec:concreteprobs:signalprocessing:canonical-linear}
The fundamental 
\emph{sparse recovery problem}
takes the form\footnote{The decision version of~\cardmin{$Ax=b$} and
  variants with other linear constraints than equality is also called
  \emph{minimum number of relevant variables in linear systems
    (MinRVLS)}~\cite{AK98} or \emph{minimum weight solution to linear
    equations}~\cite{GJ79}.}
\begin{equation*}
  \min~\norm{x}_0 \st Ax=b, \tag{\tagcardmin{$Ax=b$}}
\end{equation*}
where $A\in\R^{m\times n}$ with (w.l.o.g.) $\rank(A) = m < n$, and
$b\in\R^m$. Its variant allowing for noise in the linear measurements is
usually deemed more realistic (although real-world applications for the above noise-free setting do exist) and can be formulated~as
\begin{equation*}
  \min~\norm{x}_0 \st \norm{Ax-b}_2\leq\delta, \tag{\tagcardmin{$\norm{Ax-b}_2\leq\delta$}}
\end{equation*}
with some $\delta\in(0,\norm{b}_2)$ that is often derived from statistical
properties of the noise in applications. The assumption $\delta<\norm{b}_2$
excludes the otherwise trivial all-zero solution. Depending on noise
models and application contexts, the $\ell_2$-norm in the
constraint may be replaced by the $\ell_1$-norm (e.g., when the noise is
impulsive, cf.~\cite{FS09}), by the $\ell_\infty$-norm (in case of uniform
quantization noise or for sparse linear discriminant analysis,
cf.~\cite{BGL16,CL11}, respectively), or possibly by general
$\ell_p$-(quasi-)norms for some~$p>0$.

An alternative to cardinality minimization seeks to optimize data
fidelity within a prescribed sparsity level~$k\in\N$ of the signal vector
to be reconstructed, i.e., typically,
\begin{equation*}
  \min~\norm{Ax-b}_2 \st \norm{x}_0\leq k. \tag{\tagcardcons{$\norm{Ax-b}_2$}{$k$}{$\R^n$}}
\end{equation*}
This problem is often also referred to as \emph{subset selection} or
\emph{feature selection}, see, e.g.,~\cite{M02,BKM16}, and plays an
important role in many regression and machine learning tasks (see also
Section~\ref{sec:concreteprobs:statML}). Here, as for~\cardmin{$\norm{Ax-b}_2\leq\delta$}, the $\ell_2$-norm term is often rewritten
equivalently as $\tfrac{1}{2}\norm{Ax-b}_2^2$ to ensure differentiability (in $x$ with $Ax=b$) and simplify derivative notation; variants employing other norms also exist. The special case with ortho\-gonal~$A$ yields a sparse version of the standard \emph{denoising} problem, where one seeks to ``clean up'' a noisy version $b=x+e$ of the target signal~$x$ (in case $A=I$, cf.~\cite{EA06}), often incorporating an orthogonal basis transformation ($A\neq I$ but orthogonal, as in, e.g., \cite{DJ94a,DJ94b,M09}). Going beyond orthogonal bases, i.e., utilizing sparse representability w.r.t.\ more general~$A$---such as overcomplete dictionaries, see Section~\ref{sec:concreteprobs:signalprocessing:dictionary} below---can further improve denoising capabilities, e.g., in image processing,
see, for instance, \cite{EA06} and references therein.

By its respective definition, \cardmin{$\norm{Ax-b}_2\leq\delta$} requires (approximate) know\-ledge of the noise level~$\delta$, and for \cardcons{$\norm{Ax-b}_2$}{$k$}{$\R^n$}, the user must specify the allowed sparsity level~$k$. Since in practice it may be unclear how to choose either~$\delta$ or~$k$ appropriately, the regularization approach
\begin{equation*}
  \min~\norm{x}_0 + \tfrac{1}{2\lambda}\norm{Ax-b}_2^2 \tag{\tagcardreg{$\tfrac{1}{2\lambda}\norm{Ax-b}_2^2$}{$\R^n$}}  
\end{equation*}
has also been thoroughly investigated. Note that this problem is also particularly suitable to situations where the noise has limited variance (but its level is unknown), and a sparse solution (of unknown cardinality) is sought. Here, the regularization parameter
$\lambda>0$ controls the tradeoff between sparsity of the solution and data
fidelity.  While this model has the potential advantage of being
unconstrained, it is similarly unclear how to ``correctly'' choose
$\lambda$ in most applications. In general, there are many different
approaches to obtain regularization, sparsity or residual error-bound
parameters that work well for an application at hand, including homotopy
schemes and cross-validation techniques. 

A fundamental question from the signal processing perspective is that of
uniqueness of the recovery problem solution. In particular, for the basic
reconstruction problem \cardmin{$Ax=b$}, uniqueness can be characterized by
means of a matrix parameter called the \emph{spark} (see
\cite{GR97,DE03,T19}), which is defined as the smallest number of linearly
dependent columns, i.e.,
\begin{equation*}
    \spark(A)=\min\{\norm{x}_0 \suchthat Ax=0,~x\neq 0\}. \tag{\tagcardmin{$Ax=0,\,x\neq 0$}}
\end{equation*}
Indeed, all $k$-sparse signals $\hat{x}$ are respective unique optimal
solutions of \cardmin{$Ax=A\hat{x}$} if and only if $k<\spark(A)/2$, see
\cite{DE03,GN03} or \cite[Thm.~1.1]{EK12}. The spark is also known as
the \emph{girth} of the matroid defined over the column index set,
cf.~\cite{O92}, and it is also important in other fields, e.g., in the
context of tensor decompositions \cite{K77,KB09,Z17} (by relation to the
so-called ``Kruskal rank'' $\spark(A)-1$) or matrix
completion~\cite{ZMY12}. When working in the binary field $\F_2$, the spark
problem amounts to computing the minimum (Hamming) distance of a binary
linear code, which---along with the strongly related problem of
maximum-likelihood decoding---has been treated extensively in the coding
theory community, see, e.g., the structural and polyhedral results and LP
and MIP techniques discussed in
\cite{PKWTRH10,ZS12,FWK05,KD10,BG86,KTP18,PT20} and references therein.

Another connection to coding theory is found by relating the cardinality-minimi\-za\-tion 
problem  \cardmin{$Ax=b$} to an error-correction perspective in \emph{decoding} applications (see, e.g.,
\cite{CT05}): Suppose a message $y$ is encoded using a linear code $C$ with
full column-rank as $b\define Cy$, but a corrupted version
$\hat{b}\define b+\hat{e}$ is received. If the unknown transmission error
$\hat{e}$ is sufficiently sparse, recovering the true message $y$ can be
formulated as $\min_{x}\norm{\hat{b}-Cx}_0$. Using a left-nullspace
matrix~$B$ for~$C$, multiplying $\hat{b}=Cy+\hat{e}$ from the left by $B$
yields $B\hat{e}=B\hat{b}\eqqcolon d$. Now, the sparse error
vector~$\hat{e}$ can be obtained by solving \cardmin{$Bx=d$}, and once
$\hat{e}$ is known, it remains to solve $Cy=b+\hat{e}$ for $y$ (which is
trivial since $C$ has full column-rank) to recover the original message.

Finally, for all problems defined above, several variants with additional
constraints on the variables have been considered in the literature---in
particular, nonnegativity constraints ($x\geq 0$), more general variable
bounds ($\ell\leq x\leq u$ for $\ell,u\in\R^n\cup\{\pm\infty\}$ with
$\ell\leq 0\leq u$, $\ell<u$), or integrality constraints. The case of
complex-valued variables has also been investigated in compressed sensing
and sparse signal recovery problems; nevertheless, for simplicity, we stick
to the real-valued setting throughout this paper unless explicitly stating otherwise.

\subsubsection{Sparse Recovery From Nonlinear Measurements}\label{sec:concreteprobs:signalprocessing:canonical-nonlinear}
While compres\-sed sensing concentrates on reconstructing sparse signals from
\emph{linear} measurements, analogous tasks have also been investigated for
certain kinds of \emph{nonlinear} observations. In particular, the
classical optics problem of \emph{phase retrieval}~\cite{W63} has been
demonstrated to benefit from sparsity priors as well, see, e.g.,
\cite{MRB07,SBE14}. The (noise-free) sparse phase retrieval problem may be
stated as
\begin{equation*}
  \min~\norm{x}_0 \st \abs{Ax}=b, \tag{\tagcardmin{$\abs{Ax}=b$}}
\end{equation*}
where, generally, $A\in\C^{m\times n}$ (often a Fourier matrix) and $x$ is
also allowed to take on complex values; here, $\abs{Ax}$ denotes the
component-wise absolute value. Naturally, noise-aware variants exist for
this type of problem as well (and are arguably more realistic than the
idealized problem above), as do cardinality-constrained analogues; for
brevity, we do not list them explicitly.  Also, instead of the
``magnitude-only'' measurement model~$\abs{Ax}$, the squared-magnitude
$\abs{Ax}^2$ (again, evaluated component-wise) is often used. Typical
further constraints impose nonnegativity or a priori information on the
signal support, e.g., restricting the solution nonzeros to certain index
ranges. To achieve solution uniqueness up to a global phase factor in phase
retrieval, oversampling (i.e., $m>n$) is necessary in general.

It is worth mentioning that sparse phase retrieval using squared-magnitude
measurements can also be viewed as a special case of what has been termed
\emph{quadratic compressed sensing}~\cite{SESS11}, where the linear
measurements $Ax$ are replaced by quadratic ones $x^\top A_k x$,
$k=1,\dots,K$, with symmetric positive semi-definite matrices~$A_k$. The
most general form of cardinality minimization problem with a (single)
quadratic constraint can be stated~as
\begin{equation*}
  \min~\norm{x}_0 \st x^\top Qx + c^\top x\leq\varepsilon, \tag{\tagcardmin{$x^\top Qx + c^\top x\leq\varepsilon$}}
\end{equation*}
where $Q\in\R^{n\times n}$ is symmetric positive (semi-)definite, and
$\eps>0$. Extensions to multiple quadratic constraints as in quadratic
compressed sensing are conceivable as well. A problem of this type is
considered in the context of \emph{sparse filter design}~\cite{WO13a,WO13b},
namely
\[
  \min~\norm{x}_0 \st (x-b)^\top Q (x-b)\leq\varepsilon
\]
with a positive definite matrix $Q$. Note
that~\cardmin{$\norm{Ax-b}_2\leq\delta$} can also be rewritten in this
form:
\[
  \min~\norm{x}_0 \st x^\top A^\top A x - 2b^\top Ax\leq \delta^2-b^\top b.
\]
Here, however, $Q=A^\top A$ is rank-deficient (for $A\in\R^{m\times n}$
with $\rank(A)=m<n$), resulting in unboundedness of the feasible set in
certain directions.

A cardinality minimization problem of the form
\cardmin{$\norm{Ax-b}_2\leq\delta$,\,$\abs{x}\in\{0,1\}$,\,$x\in\C^n$} was
considered in~\cite{FHMPPT18}, combining nonconvex ``modulus'' constraints
and noise-aware linear measurement constraints. Various related approaches
to exploit the concept of sparsity in the context of
\emph{direction-or-arrival estimation}, \emph{sensor array} or
\emph{antenna design} have also been investigated, see, e.g.,
\cite{SPP14,HLM15,ZLG17,HA21}; however, here, the true cardinality is
typically replaced by an $\ell_1$-norm surrogate
(cf.\ Section~\ref{sec:inexactopt:surrogates}), and group sparsity models
(cf.\ Section~\ref{sec:concreteproblems:miscellaneous:groupsparsity}) may be
used instead of standard vector sparsity.

\subsubsection{Generalized Sparsity Models}\label{sec:concreteprobs:signalprocessing:dictionary}
In the problems considered thus far, the vector~$x$ is assumed to be sparse itself, or to be well approximated by a sparse one. While this basic sparsity model proved adequate and was successfully utilized in numerous examples, in different practical applications, a more general approach is called for, as the signal~$x$ may not be (approximately) sparse directly. Thus, it often makes sense to admit \emph{sparse representations} with respect to a given matrix~$D$ (called \emph{dictionary}), i.e., $x\approx Ds$ with a sparse coefficient vector~$s$. Sometimes, taking $D$ as a certain basis matrix
(e.g., a discrete cosine transform or wavelet basis) can already work quite well, and generally,
overcompleteness in the dictionary---i.e., having more columns than rows---allows for even sparser representations and further applications. For instance, loosely related to the decoding problem outlined earlier, \cite{WMMSHY10} considers face recognition by identifying a new (vectorized) image~$x$ as a sparse linear combination of elements from a large dictionary~$D$ of partially occluded/corrupted images taken under varying illumination, which can be modeled as
\[
  \min~\norm{s}_0 + \norm{e}_0 \st x = Ds + e,
\]
where $e$ is an error vector. (This problem generalizes to the ``robust PCA'' problem of decomposing a matrix into a sparse and a low-rank part, see, e.g.,~\cite{CLMW11}.) Moreover, importantly, a suitable dictionary can be \emph{learned} from data to enhance representability for certain signal or image classes, 
see Section~\ref{sec:concreteprobs:statML}. Thus, in principle, for a
(fixed) dictionary~$D$, one can replace $Ax$ by $ADs$ and $\norm{x}_0$ by
$\norm{s}_0$ in all of the problems from Sections~\ref{sec:concreteprobs:signalprocessing:canonical-linear} and~\ref{sec:concreteprobs:signalprocessing:canonical-nonlinear}.

The above approach is sometimes called the \emph{synthesis} sparsity model,
since the signal $x$ is ``synthesized'' from a few columns of $D$. The
alternative \emph{cosparsity} (or \emph{analysis}) model instead presumes
that $Bx$ is sparse for some matrix $B\in\R^{p\times n}$ with $p> n$, see,
e.g., \cite{EMR07,NDEG13,KR13,SWGK16,D16}. Thus, the respective
analysis-variants of the models discussed earlier can be obtained by simply
replacing $\norm{x}_0$ by $\norm{Bx}_0$; the measurement part (e.g., $Ax=b$
or $\norm{Ax-b}_p\leq\delta$) remains unchanged. Clearly, this constitutes
an immediate generalization of the respective synthesis-variant---note that
the two variants become equivalent when~$B$ is a basis, since then, one can
substitute~$x$ by~$B^{-1}x$ throughout the respective problem and arrive
back at the synthesis model form---and hence offers some more flexibility.

The cosparsity viewpoint has been employed, for instance, in discrete
tomo\-graphy (see, e.g., \cite{DPSS14} for a cosparsity minimization
problem with linear projection equations and box constraints) and
\emph{image segmentation} (see, e.g., \cite{SWFU15} treating a so-called
discretized \emph{Potts model} or \cite{J13,BKLMW09} for one-dimensional ``jump-penalized'' least-squares segmentation, both of which amount to minimization of an $\ell_2$-norm data fidelity term with cosparsity-regularization), where $B$ is taken as a discrete gradient or finite-differences operator. Further applications
include, for example, audio denoising, see, e.g., \cite{GKBG17}.

\subsection{Portfolio Optimization and Management}\label{sec:concreteprobs:finance}
Quadratic programs (QPs) with cardinality constraints rather than a
cardinality objective play a crucial role in financial applications, in
particular, portfolio optimization, see, e.g.,
\cite{B96,CST13,GL13a,MB14,BCW18}.  Broadly speaking, in \emph{portfolio
  selection} (or \emph{portfolio management}), one seeks to find (or
update, resp.) a low-risk/high-return composition of assets from a given
universe, e.g., the constituents of a stock-market index like the
S\&P500. Here, cardinality constraints serve the purpose of reducing the cost and complexity of management of the 
resulting portfolio. These problems are usually formulated in the general
form
\begin{equation}
  \min~x^\top Qx - c^\top x \st Ax\leq b,~\norm{x}_0\leq k,
  \tag{\tagcardcons{$x^\top Qx - c^\top x$}{$k$}{$Ax\leq b$}}
\end{equation}
where the symmetric positive (semi-)definite matrix $Q\in\R^{n\times n}$ is
the (possibly scaled) covariance matrix of the assets and $c\in\R^n$ is the
vector of expected returns. If the focus is on achieving a low risk
(volatility) profile, the return-maximization term $-c^\top x$ is sometimes
replaced by a minimum-return constraint $c^\top x\geq \rho$. Similarly, the
risk term $x^\top Q x$ can be replaced by a maximum-risk constraint
$x^\top Qx \leq r$. The system $Ax\leq b$ subsumes commonly encountered
variables bounds $\ell\leq x\leq u$ (in particular, $x\geq 0$ prohibits
short-selling) as well as further constraints such as $\ones^\top x=1$
(when, as is usual, $x_i\geq 0$ represents allocation percentages) or
minimum-investment constraints\footnote{Minimum-investment constraints have
  the form $x_i\in\{0\}\cup[\ell,u]$, and so are not, technically, linear
  constraints. The associated variables are often referred to as
  semi-continuous (see, e.g., \cite{SZL13}). With standard modeling
  techniques to formalize the cardinality constraints (see, e.g.,
  \cite{B96,BCW18}), however, they can be linearized; e.g., using
  $z_i\in\{0,1\}$ with $z_i=0\Rightarrow x_i=0$, a minimum-investment
  constraint simply becomes $\ell z_i\leq x_i\leq u z_i$; see also
  Section~\ref{sec:exactopt:cardcons:example}.} (e.g., to prevent positions
that incur more transaction fees than they are expected to earn
back). There are also portfolio selection problems with linear objectives,
see, e.g., the summary provided in~\cite{CMBS00}.

Since $Q$ is symmetric positive semidefinite, the above problem is convex
except for the cardinality constraint. Variants of these kinds of models
have been considered that include a further quadratic regularization term
$\lambda\norm{x}_2^2$ in the objective and/or diagonal-matrix extraction
(i.e., separating $Q$ into a positive semidefinite and a diagonal part) as
a kind of preprocessing step; see~\cite{BCW18} for a recent overview.

Moreover, note that~\cardcons{$\norm{Ax-b}_2$}{$k$}{$\R^n$} is a special
case of the above ge\-neral problem, as it can be rewritten~as
\[
  \min~x^\top A^\top Ax-2b^\top A x \st \norm{x}_0\leq k.
\]
However, the matrix $Q\define A^\top A$ is again rank-deficient here for
the matrices~$A$ usually considered in sparse recovery applications. In
fact, by exploiting the fact that a symmetric positive semidefinite
rank-$r$ matrix $Q\in\R^{n\times n}$ can be decomposed as $Q=S^\top S$ with
some $S\in\R^{r\times n}$ (think Cholesky factorization), \cite{BCW18} show
that \cardcons{$x^\top Qx + c^\top x$}{$k$}{$Ax\leq b$} can conversely be
rewritten to resemble
\cardcons{$\tfrac{1}{2}\norm{Ax-b}^2_2$}{$k$}{$\R^n$}, albeit with an
additional linear term in the objective and retaining the other (linear)
constraints.

It is worth mentioning that, in a spirit similar to sparse PCA (see the
next subsection for a definition), the covariance matrix $Q$ in real-world
portfolio selection problems is sometimes replaced by a low-rank estimate,
e.g., from truncating the singular-value decomposition of the $Q$ obtained
with the data, cf.~\cite{BCW18,ZSL14}.

\subsection{High-Dimensional Statistics and Machine Learning}\label{sec:concreteprobs:statML}
Cardinality aspects also play an important role in various applications in machine learning and data science; for clarity, we break down the following discussion into topical subsections.

\subsubsection{Sparse Regression, Feature Selection, and Principle Component Analysis}\label{sec:concreteprobs:statML:regr_fs_pca}
The problems \cardmin{$\norm{Ax-b}_2\leq\delta$} or \cardcons{$\norm{Ax-b}_2$}{$k$}{$\R^n$} are often referred to as \emph{sparse regression}, being cardinality-considerate versions of classical linear regression (ordinary least-squares).
Another problem from statistical estimation that is related to
\cardmin{$\norm{Ax-b}_2\leq\delta$} seeks to find sparse regressors
with a constraint on the maximal absolute correlation between predictors
and the corresponding residual; this can be formulated as the so-called
\emph{discrete Dantzig selector}~\cite{MR17}:
\begin{equation*}
  \min~\norm{x}_0 \st \norm{A^\top(Ax-b)}_\infty\leq\delta. \tag{\tagcardmin{$\norm{A^\top(Ax-b)}_\infty\leq\delta$}}  
\end{equation*}

As mentioned earlier
(cf.\ Section~\ref{sec:concreteprobs:signalprocessing}), the problem
\cardcons{$\norm{Ax-b}_2$}{$k$}{$\R^n$} is also known as \emph{subset selection}
or \emph{feature selection}, see, e.g.,~\cite{M02,BKM16}. Beyond sparse
regression, feature selection is, in fact, a vital part of various machine learning
problems: Wherever a model of some kind is to be trained to perform
inference/prediction tasks, from simple regression to complex neural
networks, the (input) features are typically selected manually and can be
numerous. Thus, integrating a sparsity component to automatically detect
relevant features has become a staple in reducing the computational burden
and sharpen model interpretability; see also
Section~\ref{sec:concreteprobs:statML:classification} below.

Furthermore, QPs with cardinality constraints are not only important in finance (cf.~Section~\ref{sec:concreteprobs:finance}), but are also encountered in feature extraction methods. In particular, the well-known \emph{sparse principal component
  analysis (PCA)} problem (see, e.g., \cite{ZHT06,dABEG08,LT13,DMW18,BB19})
is usually defined as
\begin{equation*}
  \max~x^\top Qx \st \norm{x}_2=1,~\norm{x}_0\leq k. \tag{\tagcardcons{$-x^\top Qx$}{$k$}{$x^\top x=1$}}
\end{equation*}
Clearly, sparse PCA is related to \cardcons{$x^\top Qx + c^\top x$}{$k$}{$Ax\leq b$}, albeit with nonconvex objective---note that earlier, we discussed a minimization problem, but in sparse PCA, we maximize a quadratic term. Also, here, the quadratic equation $\norm{x}_2=1$ $\Leftrightarrow$
$\norm{x}_2^2=1$ introduces further nonconvexity, but may, in fact, be
relaxed to its convex counterpart $\norm{x}_2\leq 1$ in an equivalent
reformulation, see~\cite[Lemma~1]{LT13}. Genera\-lizing
the constraint $\norm{x}_2=1$ to $x^\top Bx=1$ with a symmetric positive
semidefinite matrix $B$, one obtains the \emph{sparse linear discriminant analysis (LDA)} problem, see, e.g., \cite{MWA06}. The sparse PCA problem is also taken up in \cite{LX20}, which presents mixed-integer SDP formulations and an approximate mixed-integer LP formulation, compare their strength to other formulations and analyze their theoretical and practical performance. Similarly, \cite{DMW22} considers the interesting related problem of sparse PCA with global support. Here the goal is, given an $n\times n$ covariance matrix $A$, to compute an $n\times r$ matrix $V$ (with $r$ typically much smaller than $n$) with orthonormal columns, so as to maximize $\mathrm{trace}(V^\top A V)$, but subject to $V$ having at most $k$ nonzero rows. The $r$ columns of $V$ can thus be viewed as a set of $k$-sparse principle components of~$A$ with common global support. 

\subsubsection{Classification}\label{sec:concreteprobs:statML:classification}
Cardinality constraints have also been employed in other machine learning
tasks, and are often introduced to improve interpretability of
learned classification or prediction models. We already mentioned the
feature selection problem \cardcons{$\norm{Ax-b}_2$}{$k$}{$\R^n$}. Another
example is the sparse version of \emph{support vector machines (SVMs)} for
(binary) classification, which can be stated as\footnote{Note that we
  slightly abuse notation by referring to the sparse SVM problem class as
  \cardcons{$L(w,b)$}{$k$}{$(w,b)\in\R^{n+1}$}, since the cardinality
  constraint only involves~$w$ but not~$b$. Never\-theless, clearly, there
  is no requirement that a cardinality constraint involves \emph{all}
  variables of a problem under consideration, although this is typically
  the case in the problems we discuss here.}
\begin{equation*}
  \min~L(w,b) \st \norm{w}_0\leq k, \tag{\tagcardcons{$L(w,b)$}{$k$}{$(w,b)\in\R^{n+1}$}}  
\end{equation*}
where
$L(w,b)\define\sum_{i=1}^{m}\ell(y_i,w^\top
x_i+b)+\tfrac{1}{2\lambda}\norm{w}_2^2$. Here, $\ell$ is one of several
possible convex empirical loss functions (w.r.t.\ input data points
$x_i\in\R^n$ with associated labels $y_i\in\{-1,1\}$) that is minimized by
training the classifier hyperplane $w^\top x+b=0$. Similarly to the
portfolio selection problem treated in \cite{BCW18}, an optional
regularization term $\tfrac{1}{2\lambda}\norm{w}_2^2$ with
$\lambda>0$---called \emph{ridge} or \emph{Tikhonov} penalty---can be used
to ensure strong convexity and thus existence of a unique optimal solution,
see, e.g.,~\cite{BPVP17}.

The idea of ``interpretability by sparsity'' can also be found in recent approaches to train oblique decision trees for (multi-class) classification. While standard decision trees split data inputs at tree nodes according to a single feature (e.g., follow the left branch if $x_i\leq b$, and the right branch otherwise, with tree leaves yielding the predicted class for the input feature vector $x$), more powerful splits use hyperplanes $(a^j)^\top x=b^j$ whose coefficients $(a^j,b^j)$ are obtained via training the model. At least for small tree-depths, one can compute optimal decision trees (in the sense of classification accuracy w.r.t.\ the chosen
task and training/testing data sets) with mixed-integer programming, cf.,
e.g., \cite{BD17}. To retain the clear interpretability of univariate
splits, one can restrict the cardinality of the vectors $a^j$ used at split
nodes $j$ of the classification tree being learned, so each path through the tree represents a series of decisions based on a few features each.

\subsubsection{Dictionary Learning}\label{sec:concreteprobs:statML:DL}
In connection with sparse coding in signal and, in particular, image
processing, \emph{dictionary learning (DL)} problems have received
considerable attention over the past years. Indeed, the observation that
certain signal classes admit sparse approximate representations w.r.t.\
some basis or overcomplete ``dictionary'' matrix (see, e.g.,
\cite{OF96,EA06,MBP14}) was an important motivation for the intense
research on sparse reco\-very techniques. Following this understanding that
signals are not necessarily sparse themselves but may be sparsely
approximated w.r.t.\ a dictionary $D$ (i.e., not $x$ is sparse but
$x\approx Ds$ with $\norm{s}_0$ small), it was soon realized that while
some fixed dictionaries may work reasonably well, better results can be
achieved by adapting the dictionary to the data. Thus, the goal of DL is to
train suitable dictionaries on the datasets of interest for a concrete
application at hand. Example applications include image denoising and
inpainting (see, e.g., \cite{EA06,MES08,MBP14}) or simultaneous dictionary
learning and signal reconstruction from noisy linear or nonlinear
measurements (see, e.g., \cite{MBP14,TEM16}). Possible basic formulations
of the task to algorithmically learn suitable matrices on the basis of
large collections of training signals are
\[
  \min_{\{s^t\},D}~\tfrac{1}{2}\sum_{t=1}^T\norm{x^t-Ds^t}_2^2 + \lambda\sum_{t=1}^T\norm{s^t}_0
\]
or
\[
  \min_{\{s^t\},D}~\sum_{t=1}^T\norm{s^t}_0 \st \norm{x^t-Ds^t}_2 \leq \delta~~\forall t,
\]
usually additionally constraining the columns of $D$ to be unit-norm in
order to avoid scaling ambiguities. Here, all training signals $x^t$,
$t=1,\dots,T$, are sparsely encoded as $Ds^t$ w.r.t.\ the same dictionary
$D$. Unsurprisingly, dictionary learning is also \NP-hard in general (and hard
to approximate)~\cite{T15}, and no general-purpose exact solution methods are
known. Instead, algorithms are typically of a greedy nature or employ
alternating minimization/block coordinate descent, iteratively solving ea\-sier subproblems obtained by fixing all but one group of variables, see,
for instance, \cite{OF96,AEB06,MBPG10}. In particular, many such schemes
involve classical sparse recovery problems like,
e.g.,~\cardmin{$\norm{Ax-b}_2\leq\delta$} or
\cardcons{$\norm{Ax-b}_2$}{$k$}{$X$} as frequent subproblems, so any
progress regarding solvability of those problems can also directly impact
many dictionary learning algorithms. Such DL schemes work reasonably well
in practice, and may even be extended to simultaneously learn a dictionary
for sparse coding and reconstructing the sparse signals, from linear or
nonlinear (noisy) measurements, see, e.g., \cite{TEM16}. Also, note that,
as in compressed sensing and especially
for~\cardmin{$\norm{Ax-b}_2\leq\delta$} and similar problems, the
$\ell_0$-norm is often replaced by its $\ell_1$-surrogate,
cf.~Section~\ref{sec:inexactopt:surrogates}. However, apart
from occasional results demonstra\-ting convergence to stationary points of the typically nonconvex DL models, hardly any success guarantees are known for such methods in general. 

For special cases, researchers have nevertheless considered the question of \emph{dictionary identifiability}, i.e., whether the true underlying dictionary~$D$ can be uniquely reconstructed (up to trivial sign, scale and permutation ambiguities) from measurements $B=DX$ along with sparse signals forming the columns of~$X$. Thus far, results are relatively scarce and mostly yield probabilistic guarantees (typically for certain algorithms) under arguably strong assumptions on the dictionary~$D$ and/or assuming support locations and entry values of~$X$ follow some probability distributions. For instance, \cite{SWW12,SQW17a,SQW17b} investigate the case in which~$D$ is a basis (square, invertible matrix) and measurements are noiseless, \cite{AGM14,AAJNT14,S18} consider noisy measurements and overcomplete but incoherent dictionaries, \cite{BKS15} does so without incoherence requirements, and \cite{AV18} treats the noise-free case with overcomplete~$D$ and a less restrictive ``semi-random'' model for the supports of~$X$. %The algorithmic techniques employed in these works include iterative thresholding, $\ell_1$-minimization, alternating minimization, K-means and Riemannian trust region schemes. 
In~\cite{RMN19}, success guarantees and error bounds are derived for the case of unitary bases~$D$ and~$X$ with certain spectral bound properties that hold with high probability under common probability distribution models for its support/entires. The paper \cite{STW18} relates DL to the geometrical notion of combinatorial rigidity of subspace incidence systems and provides a classification of several DL guarantees from this viewpoint, along with some new identifiability results. Deterministic recovery conditions are even less common; an early example is \cite{AEB06b}, which establishes non-probabilistic identifiability at the cost of potentially exponential sample complexity. More recently, \cite{BT19} avoids probabilistic arguments as well as the inherent intractability of DL and, assuming only a certain norm bound, shows that~$D$ and~$X$ can be approximated up to bounded small violations of the presumed number of dictionary columns and sparsity level of those in~$X$.

\subsubsection{Rank Minimization and Low-Rank Matrix Completion}\label{sec:concreteprobs:statML:rankmin}
A problem related to DL that, in fact, generalizes \cardmin{$Ax=b$}, is the
\emph{affine rank minimization} problem
$\min\{\rank(X) : \mathcal{A}(X)=b\}$, where $\mathcal{A}$ is a linear
map. Clearly, if~$X$ is further constrained to be diagonal, the problem
reduces to finding the sparsest vector in an affine subspace, i.e.,
\cardmin{$Ax=b$}. We refer to \cite{RFP10} for interesting theoretical
analyses of this problem and references to various applications from system
identification and control to collaborative filtering. Another
sparsity-related problem that received attention to due its successful
application to the ``Netflix problem''---in a nutshell, obtaining good
predictions for recommendation systems based on limited (user
rating/preference) observations---is that of \emph{low-rank matrix
  completion}. Here, the most basic problem seeks a matrix
$B\in\R^{m\times n}$ that approximates a given matrix $A\in\R^{m\times n}$
as well as possible under a \emph{rank constraint} $\rank(B)\leq k$. Rank
constraints for matrices are related to cardinality constraints for
vectors; indeed, if $A\in\R^{m\times n}$ has rank~$k$, this means that
only~$k$ of its $\min\{m,n\}$ singular values are nonzero. Thus, using the
singular-value decomposition $A=U\Sigma V^\top$, a rank constraint on $A$
can be expressed as $A=U\Sigma V^\top$, $\norm{\diag(\Sigma)}_0\leq k$.  In
matrix completion, the usual objective is $\min\norm{B-A}_\text{F}$, whence
it is clearly always optimal to have $B$ of rank equal to $k$ (provided
$\rank(A)\geq k$); then, it is common practice to directly split $B$ as
$B=LR$ with $L\in\R^{m\times k}$ and $R\in\R^{k\times n}$ and handle the
rank constraint implicitly by construction. However, the problem has also been viewed as rank minimization under linear constraints, for which the rank can then be modeled semialgebraically, which gives rise to a semidefinite relaxation that is exact under certain conditions, see~\cite{dA03}. \emph{Inductive} or
\emph{interpretable} matrix completion aims at enhancing interpretability
of the reasons for recommendations by substituting $R$ (or $L$, or both) by
$R=ST$ with a known ``feature matrix'' $S$, so that linear combinations of
these features yield $R$, and then enforcing the rank constraint by restricting the cardinality of the
coefficient vectors of these linear combinations to some~$k$, or
restricting the selection of features to~$k$, respectively; see~\cite{BL20}
and references therein.

\subsubsection{Clustering}\label{sec:concreteprobs:statML:clustering}
Finally, it is worth mentioning that the term ``cardinality constraint'' is
also sometimes used with a slightly different meaning. A particular example
is \emph{cardinality-constrained ($k$-means) clustering}, where one seeks
to partition a set of data-points into $k$ clusters, minimizing the
inter-cluster Euclidean distances (to the cluster center). Here, one could
restrict the number of clusters to be considered by an upper cardinality
bound~$k$; however, it is trivially optimal to always use the maximal
possible number of clusters. Then, one can in fact directly incorporate the
knowledge that one will have $k$ clusters into the problem formulation in
other ways (see, e.g., \cite{RSKW19}), similarly to the rank constraint in
matrix completion we saw above. The cardinality of the clusters themselves
may then also be restricted (e.g., to balance the partition to clusters of
equal sizes), which ultimately yields cardinality equalities w.r.t.\ sets
of cluster-assignment variables; as these are typically binary variables,
this type of cardinality constraint is again different from our focus here
(cf.\ beginning of Section~\ref{sec:intro}).

 \subsection{Miscellaneous Related Problems and Extensions}\label{sec:concreteproblems:miscellaneous}
The various classes of cardinality optimization problems discussed up to
now can be generalized and extended in different directions. In this
section, we briefly point out some of these connections.

\subsubsection{``Classical'' Combinatorial Optimization Problems}\label{sec:concreteproblems:miscellaneous:combopt}
As mentioned earlier, $\spark(A)$ corresponds to the girth of the vector
matroid $\M(A)$ defined over the column subsets of $A$; cf.~\cite{O92} for
details on matroid theory and terminology. Thus, $\spark(A)$ is a special
case of the more general problem
\begin{equation*}%\label{girth}
  \girth(\M)\define\min\{\norm{x}_0 \suchthat x=\chi_C\text{ for a circuit $C$ of matroid $\M$}\},
\end{equation*}
where $(\chi_C)_j=1$ if $j\in C$ and zero otherwise (i.e., $\chi_C$ is the
characteristic vector of $C$). Moreover, recall that, when considered over
the binary field~$\F_2$, the spark problem coincides with the problem of
determining the minimum distance of a binary linear code,
cf. Section~\ref{sec:concreteprobs:signalprocessing:canonical-linear} and the references
given there; this amounts to the binary-matroid girth problem. The girth of
a matroid~$\M$ equals the cogirth of the associated dual
matroid~$\M^*$. Moreover, cocircuits of~$\M$ (i.e., circuits of~$\M^*$)
correspond exactly to the complements of hyperplanes of~$\M$,~so
\begin{equation*}
  \cogirth(\M)\define\min\{\norm{x}_0 \suchthat x=\chi_{H^c}\text{ for a hyperplane $H$ of matroid $\M$}\},
\end{equation*}
where $H^c$ is the complement of $H$ w.r.t.\ the matroid's ground set. Note
that in the case of vector matroids, the cogirth is known as \emph{cospark}
(cf. \cite{CT05}) and can be written~as
\begin{equation*}
  \cospark(A)\define\min\{\norm{Ax}_0 \suchthat x\neq 0\};
\end{equation*}
similarly to the spark, it appears in recovery and uniqueness conditions
for analysis signal models and decoding, see, e.g., \cite{CT05,NDEG13}. The
spark and cospark can thus also be interpreted as dual problems, since
$\spark(A)=\cospark(B)$ for any~$B$ whose columns span the nullspace of
$A$.

In fact, $\cospark(B)$ constitutes a special case of the more general
\emph{minimum number of unsatisfied linear relations} (MinULR) problem,
where for $B\in\R^{p\times q}$ and $b\in\R^p$, one seeks to minimize the
number of violated relations in an infeasible system $Bz\sim b$, with
$\sim\;\in\{=,\geq,>,\neq\}^p$ representing all sorts of linear relations;
see \cite{AK95,AK98}. MinULR is also known by the name \emph{minimum
  irreducible infeasible subsystem cover (Min\-IISC)}, and is a
well-investigated combinatorial problem; the same holds for its
complementary problem \emph{maximum feasible subsystem} (MaxFS), which
seeks to find a cardinality-maximal feasible subsystem of $Bz\sim b$,
cf.~\cite{AK95,APT03,P08}. Problems like MinULR play an important role in
infeasibility analysis of linear systems, e.g., when analyzing demand
satisfiability in gas transportation networks \cite{JP18,J15}. Note that
for the inhomogeneous equation $Bz=b$, MinULR can be rephrased via
\[
  \min~\norm{Bz-b}_0\quad\Leftrightarrow\quad \min\{\norm{x}_0 \suchthat x-Bz=b\},
\]
and thus can be seen as a \emph{weighted} version of~\cardmin{$Ax=b$}, with
weights zero for the $z$-variables in the objective. Conversely,
\cardmin{$Ax=b$} can also be rephrased as a special case of MaxFS (or
MinULR, of course), see, e.g., \cite{JP08}. Using a diagonal (and thus,
effectively, binary) weight matrix within the $\ell_0$-term obviously
yields special cases of the analysis formulations that generalize
$\norm{x}_0$ to $\norm{Bx}_0$ for some matrix~$B$. To the best of our knowledge, it has not yet been investigated if and how results on (or involving) the spark from the signal processing context might aid the solution of discrete optimization problems by means of the connections laid out above or by exploiting ``hidden'' spark-like subproblems such as, e.g., in Proposition~\ref{prop:1stgreedyMSsubprob} below.

Finally, countless problems from combinatorial optimization and operations
research applications seek to minimize (or restrict) ``the number of
something'', which is naturally formulated as cardinality minimization
w.r.t.\ non-auxiliary (structural) binary decision vectors under broad
general or highly problem-specific constraints. Classical examples are the
standard packing/partitioning/covering problems
\begin{equation*}
  \min~\ones^\top y \st Ay\sim\ones,~y\in\{0,1\}^n,
\end{equation*}
with $\sim\;\in\{\leq,=,\geq\}$, respectively, and $A\in\{0,1\}^{m\times n}$;
see, e.g., \cite{B98} and textbooks like \cite{KV12}. We do not delve into
these kinds of problems here, since our focus is on handling the
cardinality of \emph{continuous} variable vectors.

\subsubsection{Matrix Sparsification and Sparse Nullspace Bases}\label{sec:concreteproblems:miscellaneous:sparsification}
Another related combinatorial optimization problem essentially extends the
idea of sparse representations from vectors to matrices: For a given matrix
$A\in\R^{m\times n}$ (w.l.o.g.\ with $\rank(A)=m< n$), the \emph{Matrix
  Sparsification} problem is given by
\begin{equation}\label{MS}
  \min~\norm{VA}_0\st \rank(V)=m, ~V\in\R^{m\times m},\tag{MS}
\end{equation}
where $\norm{M}_0=\abs{\{(i,j)\suchthat M_{ij}\neq 0\}}$ counts the
nonzeros of a matrix $M$, extending the common ``$\ell_0$-norm'' from
vectors to matrices. The problem is polynomially equivalent to that of
finding a sparsest basis for the nullspace of a given matrix, by arguments
similar to the aforementioned ``duality'' relation between spark and
cospark. This \emph{Sparsest Nullspace Basis} problem can be formally
stated as
\begin{equation}\label{SNB}
  \min~\norm{B}_0\st AB=0, ~\rank(B)=n-m, ~B\in\R^{n\times (n-m)}\tag{SNB}
\end{equation}
(recall that for an $m\times n$ matrix $A$ with full row-rank $m$, the
nullspace has dimension $n-m$). MS and SNB have been studied quite
extensively from the combinatorial optimization perspective, see, e.g.,
\cite{McC83,HMcC84,CP84,GH87,EM98}; \cite{GN16} provides a nice overview of
their equivalence relation and associated complexity results, and also
establishes some connections to compressed sensing.

Exact solution of the problems MS and SNB, as well as certain approximate
versions, are known to be \NP-hard\ tasks, see
\cite{McC83,CP84,GN16,T13,T15}. Connections to matroid theory reveal an
optimal greedy method for matrix sparsification that sparsifies a given $A$
by solving a sequence of $m$ subproblems; the scheme can be described
compactly as follows (cf.~\cite{EM98,GN16}): \medskip
\begin{enumerate}
   \item Initialize $V=[\,]$ (empty matrix).
   \item For $k=1,\dots,m$, find a $v^k\in\R^m$ that is linearly
     independent of the rows of~$V$ and minimizes $\norm{v^\top A}_0$, and update $V\define (V^\top,v^k)^\top$.
\end{enumerate}
\medskip
The final $V$ minimizes $\norm{VA}_0$ and has full rank $m$, i.e.,
is indeed a solution of MS.

It turns out that the first of the above subproblems amounts exactly to a
\emph{spark} computation, i.e., a problem of the
form~\cardmin{$Ax=0,\,x\neq 0$}:
\begin{prop}\label{prop:1stgreedyMSsubprob}
  The first subproblem in the above greedy MS algorithm can be solved as a spark problem.
\end{prop}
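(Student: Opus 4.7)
The plan is to show directly that the first subproblem reduces to a spark computation on a matrix derived from~$A$. To begin, I would observe that when $k=1$ the matrix~$V$ in the greedy scheme is still empty, so the requirement that $v^1$ be linearly independent of the rows of~$V$ collapses to the single condition $v^1\neq 0$. The first subproblem therefore asks for
\begin{equation*}
    \min\bigl\{\norm{v^\top A}_0 \suchthat v\in\R^m,\,v\neq 0\bigr\}.
\end{equation*}

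Next, I would recast this in terms of the row space of~$A$. Since $A$ has full row-rank~$m$, the linear map $v\mapsto A^\top v$ is injective and its image is exactly the row space of~$A$. Setting $w\define A^\top v=(v^\top A)^\top$ therefore yields the equivalent formulation $\min\bigl\{\norm{w}_0\suchthat w\in\mathrm{rowspan}(A),\,w\neq 0\bigr\}$. To turn the subspace membership into a single homogeneous linear system, I would let $N\in\R^{n\times(n-m)}$ be any matrix whose columns form a basis of~$\cN(A)$; such an~$N$ exists because $\dim\cN(A)=n-m$ and can be obtained, e.g., from a QR decomposition of~$A^\top$. By the fundamental theorem of linear algebra, $\mathrm{rowspan}(A)=\cN(A)^\perp$, whence $w\in\mathrm{rowspan}(A)$ if and only if $N^\top w=0$. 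Substituting this equivalence, the subproblem becomes
\begin{equation*}
    \min\bigl\{\norm{w}_0 \suchthat N^\top w=0,\,w\neq 0\bigr\} \;=\; \spark(N^\top),
\end{equation*}
which is a problem of the form \cardmin{$Ax=0,\,x\neq 0$}. Alternatively, one may invoke the spark/cospark duality $\spark(M)=\cospark(B)$ (for any~$B$ whose columns span~$\cN(M)$) recalled just before the proposition: the first subproblem is by definition $\cospark(R)$ for any basis matrix~$R$ of $\mathrm{rowspan}(A)$, and the duality applied with $M=N^\top$ delivers the same conclusion.

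The argument is essentially bookkeeping and I do not expect a serious obstacle. The only mild subtlety is the full row-rank hypothesis on~$A$, which is already built into the setup of the matrix sparsification problem MS (and could in any case be enforced without loss of generality by discarding redundant rows a priori before invoking the reduction).
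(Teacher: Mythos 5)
Your proposal is correct and follows essentially the same route as the paper: the paper recognizes the subproblem as $\cospark(A^\top)$ and invokes the spark/cospark duality to pass to $\spark(D)$ for a $D\in\R^{(n-m)\times n}$ whose nullspace is spanned by the columns of $A^\top$, and your matrix $N^\top$ is precisely such a $D$, with your orthogonal-complement argument simply verifying that duality explicitly rather than citing it. The only detail the paper adds that you leave implicit is the explicit recovery of the optimal $\bar{v}$ from the spark solution $\bar{x}$ via $\bar{v}=(AA^\top)^{-1}A\bar{x}$, which your injectivity observation already guarantees exists.
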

\begin{proof}
  The first subproblem can be written as
  $\min\{\norm{A^\top v}_0\,:\,v\neq 0\}$, which we recognize as
  $\cospark(A^\top)$. In light of the earlier discussion, this is
  polynomially equivalent to $\spark(D)$, where $D\in\R^{(n-m)\times n}$ is
  such that $A^\top$ is a basis for its nullspace
  (cf. \cite[Lemma~3.1]{T19}). In particular, a solution $\bar{v}$ to
  $\min\{\norm{A^\top v}_0\suchthat v\neq 0\}$ can be retrieved from a
  solution $\bar{x}$ to $\spark(D)$ as the unique solution to
  $A^\top\bar{v}=\bar{x}$, i.e., $\bar{v}=\big(AA^\top\big)^{-1}A\bar{x}$
  (recall that $A^\top\in\R^{n\times m}$ with full column-rank $m<n$).
\end{proof}

In~\cite{GN16}, the authors show that the $m$ subproblems of the greedy MS
algorithm can, in principle, each be solved by means of sequences of $n$
problems of the form $\min~\norm{Bz - b}_0$, i.e., MinULR w.r.t.\ $Bz=b$.
An ongoing work by the present first author pursues a different strategy,
aiming at leveraging the relation to the spark problem without resorting to
breaking down each subproblem of the greedy scheme into many further
subproblems (which are still \NP-hard, too). So far, the literature
apparently only describes a handful of (combinatorial) heuristics for MS or
SNB, see \cite{McC83,CP84,BHKLPW85,CMcC92} and some further references
gathered in~\cite{T13}.

Finally, it is interesting to note that matrix sparsification can also be
interpreted as a special dictionary learning task: The columns of the given
matrix correspond to the ``training signals'' and~$V^{-1}$ to the sought
dictionary that enables sparse representations. Two crucial differences to
the usual applications of DL are that MS requires~$V$ to be a basis (rather
than the common overcomplete dictionary) and the stricter accuracy
requirements w.r.t.\ the obtained sparse representations (i.e., $\delta=0$,
whereas in signal/image processing, one is typically satisfied with, or
even desires, $Ax\approx b$ only). To the best of our knowledge, the
relationship between MS and DL has not yet been explored in either
direction.

\subsubsection{Group-/Block-Sparsity}\label{sec:concreteproblems:miscellaneous:groupsparsity}
Another extension of the sparsity concept in signal processing and learning
leads to \emph{group- (or block-)sparsity} models: Here, the prior is not
sparsity of the full variable vector, but sparsity w.r.t.\ groups of
variables, i.e., whole blocks of variables are simultaneously treated as
``off'' (zero) or ``on'' (all group members are nonzero; may also mean that
at least one member is nonzero). This perspective can be useful in many
signal processing applications like simultaneous sparse approximation or
multi-task compressed sensing/learning (e.g., \cite{TGS06,SPH09,EKB10}),
dictionary learning for image restoration (e.g., \cite{ZZG14}),
neurological imaging or bioinformatics (e.g., \cite{PVMH08}), and may offer
additional interpretability due to identification of the respective active
groups. For instance, in a feature selection context, one may have several
(disjoint or overlapping) groups of related features along with knowledge
that features within a group are either all irrelevant or all have combined
explanatory value together. A typical formulation would then read, e.g.,
\[
  \min~\norm{Ax-b}_2 \st \supp(x)\subseteq
  \bigcup_{G\in\mathcal{S}}G,~\mathcal{S}\subseteq
  \mathcal{G},~\abs{\mathcal{S}}\leq k,
\]
where $\mathcal{G}$ is a known group structure (collection of index
subsets). The group-cardi\-nality constraint is represented by
$\abs{\mathcal{S}}\leq k$ here, ensuring that the computed solution~$x$ has
support restricted to the union of a selection $\mathcal{S}$ of at most~$k$
groups. In \cite{DAP19}, the extension of cardinality constraints to group sparsity is introduced via the concept of \emph{affine sparsity constraints} (ASC), and structural properties of systems of ASCs are studied. For more details and practical application references, intractability results and relaxation properties, we refer to \cite{BBCKS16,HZ10,BH19} and references therein. 

\section{Exact Models and Solution Methods}\label{sec:exactopt}
The cardinality problems descri\-bed in the previous section are all
\NP-hard in general, and are often also very hard to solve
approximately. On the one hand, samples of such intractability results cover, in particular, \cardmin{$Ax=b$} \cite{GJ79,AK95,AK98,T15},
\cardmin{$\norm{Ax-b}_2\leq\delta$} \cite{N95}, cardinality-constrained QPs
\cite{B96}, sparse PCA \cite{TP14},
\cardreg{$\tfrac{1}{2\lambda}\norm{Ax-b}_2^2$}{$x\in\{\R^n,\R^n_+\}$}
\cite{NSID19}, and genera\-lized variants (with other norms or
sparsity-inducing penalty functions) of some such problems \cite{CYW19}, as
well as related problems such as matroid (co-)girth and (co-)spark
\cite{K95,V97,TP14,T19}, MinULR/MaxFS \cite{AK95,AK98}, and matrix
sparsification \cite{McC83,T13,GN16}. On the other hand, there are a few examples of polynomially solvable special cases in the literature that involve certain sparsity patterns or combinatorial properties of the matrix~$A$, see, e.g.,~\cite{DPDW20} for \tagcardcons{$\norm{Ax-b}_2$}{$k$}{$\R^n$} and~\cite{GI10,T19} for compressed sensing sparse recovery.

Thus, polynomial-time exact solution algorithms generally cannot exist
unless \P$=$\NP, which justifies the extensive efforts to devise
practically efficient approximate (heuristic) methods; see
Section~\ref{sec:inexactopt} below. Unfortunately, despite there being
numerous success guarantees under certain conditions on the matrix~$A$ (and
optimal solution sparsity and uniqueness) for most algorithms proposed in
the literature, the strongest such conditions are typically themselves
\NP-hard to evaluate exactly or approximately; see, for instance,
corresponding results on $\spark(A)$, the nullspace property, and the
restricted isometry property (RIP) \cite{TP14,W18}.

Nevertheless, in light of the impressive improvements in modern solvers
over the last decades, it is still worth investigating exact solution
approaches for the different cardinality optimization problems. Here, we
focus on reformulations as mixed-integer linear and nonlinear programs
(MIPs and MINLPs, for short), accompanying structural results, and
specialized solution techniques and solver components for the considered
problems. As mentioned earlier, satisfactory results may already be achievable with off-the-shelf software applied to generic models; depending on the concrete problem/application, scalability and performance can then often be further improved by exploiting problem-specific knowledge in the solving process.

We begin with describing different approaches to model the cardinality of a
variable vector, see Section~\ref{sec:exactopt:modelingcard}. Subsequently,
we will provide overviews of both~general-purpose and problem-specific
modeling and exact solution techniques, following\linebreak our broad
classification into cardinality minimization or constrained problems
(Sections~\ref{sec:exactopt:cardmin} and~\ref{sec:exactopt:cardcons},
resp.) and cardinality-regularized optimization tasks
(Section~\ref{sec:exactopt:cardreg}).

\subsection{Modeling Cardinality}\label{sec:exactopt:modelingcard}
Typically, cardinality terms are modeled using binary indicator variables that effectively encode whether an original problem variable is zero or nonzero. This can be done in a linear fashion when the problem variables are (explicitly or implicitly) bounded, see Section~\ref{sec:exactopt:modelingcard:bounds}, or via nonlinear constraints of the complementarity type, see Section~\ref{sec:exactopt:modelingcard:complem}. It is also possible to employ a bilinear replacement technique (again using binary auxiliary variables), or to model cardinality using continuous auxiliary variables and nonlinear constraints, see Section~\ref{sec:exactopt:modelingcard:other}.

\subsubsection{Exploiting (Auxiliary) Variable Bounds}\label{sec:exactopt:modelingcard:bounds}
The classical approach to model the cardinality of a continuous variable
vector $x \in \R^n$ in a MI(NL)P is by introducing big-M constraints and
auxiliary binary variables $y \in \{0,1\}^n$ that encode whether a
continuous variable is zero or nonzero. More precisely, we can rewrite
$\norm{x}_0$ as $\ones^\top y=\sum_{i\in[n]}y_i$ provided that
\begin{equation*}
  -\M y\leq x\leq \M y,\quad y\in\{0,1\}^n,
\end{equation*}
with $\M>0$ being a sufficiently large constant. Here, if $y_i=0$, the
big-M constraint forces $x_i=0$, while in case of $y_i=1$, no restriction
is imposed upon $x_i$; conversely, if $x_i\neq 0$, then $y_i$ cannot be set
to zero and therefore must be equal to $1$, so $\ones^\top y$ indeed counts
the nonzero entries of $x$. Note that $y_i=1$, $x_i=0$ is still possible,
so generally, we only have $\ones^\top y\geq\norm{x}_0$. Nevertheless,
equality obviously holds at least in optimal points of cardinality
\emph{minimization} problems, and bounding $\ones^\top y$ from above still
correctly represents a cardinality \emph{constraint} w.r.t.~$x$. Therefore,
we may refer to $\ones^\top y$ as the cardinality of $x$ for simplicity.

From a theoretical standpoint, for problems with unbounded variables, it
might not be possible to define sufficiently large bounds within MIP or
even MINLP representations, see~\cite{HL16,JL84}. In practice, appropriate
bounds (or constants $\M$) may also not be available a priori. While
theoretical bounds based on encoding lengths of the data may exist (see,
e.g., \cite{GLS93}), they are impractically huge. Similarly, using
arbitrary large values will, generally, introduce numerical instability
(in floating-point arithmetic). Indeed, supposing a solver works with a
numerical tolerance of, say, $10^{-6}$ (the typical default tolerance of
linear programming (LP) solvers), a value of, e.g., $\M=10^{7}$ can render
the model invalid numerically: For instance, one might then have
$y_i\approx 5\times 10^{-7}$, which the solver counts as zero due to its
tolerance settings, but then the big-M constraints read
$-1/2\lesssim x_i\lesssim 1/2$ and no longer correctly enforce $x_i=0$.

Generally, it is well known that a big-M approach may lead to weak
relaxations, which can significantly slow down the solving process of
(branch-and-bound) algorithms, see, e.g., \cite{BBFLMNGS16} and also the
example later in Section~\ref{sec:card_min_example}. Nonetheless, it is a
simple and flexible approach that still may work reasonably well, and is
therefore often tried in a first effort. The general big-M modeling
paradigm can be refined by using individual lower and upper bound constants
for each variable. In particular, if bounds $\ell\leq x\leq u$ are part of
the original problem, with $0\in[\ell,u]$, we can replace the above big-M
box constraint by
\[
  Ly\leq x\leq Uy,
\]
with $L\define\Diag(\ell), U\define\Diag(u)\in\R^{n\times n}$. Individual
bounds $\ell_i$ and $u_i$ for each $x_i$ could also be derived from the
data by considering the minimal and maximal values each variable may attain
while retaining overall feasibility. Given that it is not unusual
that a MI(NL)P formulation of a COP requires considerable computational
effort to be solved to provable optimality, it may indeed be worth spending some
time to tighten a valid big-M model by computing individual bounds via
\[
  \ell_i \define\min\{ x_i\suchthat x\in X\cap F\},\quad u_i\define\max\{x_i\suchthat x\in X\cap F\},
\]
where the set $F$ symbolizes further constraints possibly required to keep
these problems bounded---e.g.,
$F$ could be a level-set of the objective function w.r.t.\ a known
(sub-)optimal value, cf.~\cite{BBFLMNGS16}. In fact, especially in the
context of solving MI(NL)Ps with a branch-and-bound algorithm, one may even
consider adaptively tightening the bounds by incorporating information on,
e.g., the optimal support size or objective function value obtained along
the way. Should these bound-computation problems turn out to be
impractically hard to solve to optimality themselves, relaxations could be
employed to still provide improved valid bounds, see,
e.g.,~\cite{BBFLMNGS16}.

Some examples for problem-specific derivations of variable bounds can be
found in Sections~\ref{sec:exactopt:cardmin} and~\ref{sec:exactopt:cardcons} below. Note also that in some problems, variables
may be scaled arbitrarily, in which case $\M$ can feasibly be set to any
positive value, and that it may even be possible to not explicitly include
the variables~$x$ in a problem---see the discussion of models and methods
for \cardmin{$Ax=0,x\neq 0$} and \cardmin{$Ax=b$} in
Section~\ref{sec:exactopt:cardmin}. Section~\ref{sec:card_min_example} also provides an illustrative example which, in particular, shows the benefits of good
choices of the constant $\M$.

\subsubsection{Complementarity-Type Formulations}\label{sec:exactopt:modelingcard:complem}
A conceptually different possibility to model the cardinality
and/or support couples auxiliary binary variables to the continuous
variables by means of (nonlinear) \emph{complementarity(-type)
  constraints}:
\begin{equation*}
 x_i(1-y_i)=0\quad\forall\,i\in[n],~y\in\{0,1\}^n\qquad\Rightarrow\quad
 \ones^\top y\geq \norm{x}_0.
\end{equation*}
Here, $y_i=0$ again implies $x_i=0$, so $\ones^\top y \geq \norm{x}_0$ for
all feasible points $x,y$. In optimal solutions of cardinality minimization
problems, integrality of $y$ and $\ones^\top y = \norm{x}_0$ holds
automatically, so $y\in\{0,1\}^n$ can \emph{always} be relaxed to
$0\leq y\leq\ones$, see, e.g.,
\cite{FMPSW18}. Figure~\ref{fig:AuxiliaryVariablesFeasibleSets} illustrates
the effects of the auxiliary variables $y$ here compared to the big-M
formulation. Note also that complementarity-type constraints as above do \emph{not} (implicitly) assume boundedness of the $x$-variables---a potential advantage over the big-M approach, albeit at the cost of linearity.

\begin{figure}
  \centering
  \subfloat[$\ell_i y_i \leq x_i \leq u_iy_i, y_i \in \{0,1\}$]{
    \begin{tikzpicture}
      \fill[blue!20] (0,0) -- (1.5,1) -- (-1,1) -- cycle;\fill[blue] (0,0) circle(0.075);
      \draw[blue, line width = 2pt] (-1,1) -- (1.5,1);

      \draw[->] (-2,0) -- (-1,0) node[below]{$\ell_i$} -- (0,0) node[below]{$0$} -- (1.5,0) node[below]{$u_i$} -- (2.5,0) node[right]{$x_i$};
      \draw[->] (0,0) -- (0,1) node[left]{$1$} -- (0,1.5) node[left]{$y_i$};
    \end{tikzpicture}
  }
  \qquad\qquad
  \subfloat[$\ell_i \leq x_i \leq u_i, x_i(1-y_i) = 0, y_i \in \{0,1\}$]{
    \begin{tikzpicture}
      \draw[blue!40, line width = 3pt] (0,0) -- (0,1);
      \fill[blue] (0,0) circle(0.075);
      \draw[blue, line width = 2pt] (-1,1) -- (1.5,1);

      \draw[->] (-2,0) -- (-1,0) node[below]{$\ell_i$} -- (0,0) node[below]{$0$} -- (1.5,0) node[below]{$u_i$} -- (2.5,0) node[right]{$x_i$};
      \draw[->] (0,0) -- (0,1) node[left]{$1$} -- (0,1.5) node[left]{$y_i$};
    \end{tikzpicture}
  }
  \caption{\footnotesize\itshape Effect of auxiliary variables $y$ in the
    big-M formulation (left) and complementarity-type formulation
    (right). Shaded areas illustrate the effect of relaxing the binary
    variable to $y_i \in [0,1]$.}\label{fig:AuxiliaryVariablesFeasibleSets}
\end{figure}
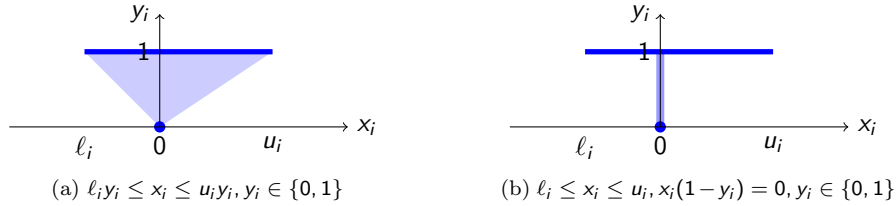

Constraints like $x_i(1-y_i)=0$ are related to the class of
\emph{equilibrium constraints}, cf.~\cite{LPR96}, and can also be
interpreted as \emph{specially-ordered set constraints of type~1} (SOS-1
constraints)~\cite{BW05}, since only one out of a group of
variables---here, a pair $x_i,(1-y_i)$---may be nonzero. Modern MIP solvers
can exploit this structural know\-ledge in certain ways (e.g., for
bound-tightening), so it may be worth informing a solver of this explicitly
in addition to another employed formulation, as done, e.g.,
in~\cite{BKM16}. Specialized branching schemes for SOS-1 or complementarity
constraints were discussed, e.g., in~\cite{BT70,DFJN01}.

Note also that these complementarity-type constraints are bilinear and can
therefore be relaxed using McCormick envelopes~\cite{McC76}, a
relaxation-by-linearization technique that actually is an \emph{exact}
reformulation for bounded $\ell\leq x\leq u$ and $y\in\{0,1\}^n$:
Introducing auxiliary variables $z_i\define x_i y_i$ to replace each
bilinear term and additional linear constraints $z_i\geq\ell_i y_i$,
$z_i\geq x_i+u_i y_i-u_i$, $z_i\leq u_i y_i$, and
$z_i\leq x_i +\ell_i y_i -\ell_i$ ensures equivalence of the original and the
extended problem in this case. However, in the special case in which the bilinear terms are associated with complementarity constraints deriving from cardinality, the McCormick envelopes do not add anything to the big-M reformulation above.

The paper~\cite{FMPSW18} describes various ways to reformulate the
complementarity-type constraints. Because complementarity constraints are
usually defined for nonnegative variables, the above variant is called
\emph{half-comple\-men\-tari\-ty constraints} there. A variant with
classical \emph{full} complementarity constraints can easily be obtained by
splitting the variable $x$ into its nonnegative and nonpositive parts,
respectively.  Moreover, \cite{FMPSW18} discusses four equivalent nonlinear
reformulations. The motivating problem of that paper is of the form
\cardmin{$Ax\geq b,Cx=d$}, though most of the theoretical results on
optimality conditions of the nonlinear reformulations were developed for
the more general problem class
\mbox{\cardreg{$\tfrac{1}{\gamma}f(x)$}{$g(x)=0,h(x)\leq 0$}} with
$\gamma>0$ and continuously differentiable functions $f:\R^n\to\R$,
$g:\R^n\to\R^p$, and $h:\R^n\to\R^q$.

The very recent work~\cite{YMP19} introduced a branch-and-cut algorithm to
solve general \emph{linear programs with complementarity constraints}
(LPCCs) to \emph{global} optimality. In contrast, the previous work
\cite{FMPSW18} was largely concerned with computing stationary solutions.
The LPCC viewpoint offers a quite flexible modeling paradigm with a host of
diverse applications (see, e.g., those surveyed in~\cite{HMPY12}),
including, in particular, \cardmin{$X$} and \cardcons{$c^\top x$}{$k$}{$X$}
for polyhedral feasible sets~$X$, cf.~\cite{FMPSW18,BKS16}. For an overview
of related earlier works on exact methods for (certain subclasses of) LPCCs
or strongly related problems, see~\cite{YMP19} and the many references
therein. We would like to mention explicitly the interesting
minimax/Benders decomposition approach of~\cite{HMPBK08} that was extended
to convex QPs with complementarity constraints in~\cite{BMP13}, and the
quite extensive research into polyhedral aspects---i.e., cutting
planes---in~\cite{DFJN01,DFJN02,DFN03,S10,MPY12,K14,DFKZ14,KDF15,KTP19,FP17,FP18,FP20};
the last three references also consider overlapping cardinality constraints
(formulated as complementarity or SOS-1 constraints) and other MIP solver
components like branching rules for corresponding LPCCs. It is also worth
mentioning that convex quadratic constraints, as they appear in
\cardmin{$\norm{Ax-b}_2\leq\delta$} and similar problems, can be recast as
\emph{second order cone (SOC) constraints}. These have also been studied
extensively, often with a particular focus on deriving cutting planes for
(mixed-integer) SOC programs, see, e.g.,
\cite{GLind08,SBL08,D09,BS13,V13,MT15,CPT18,AJ19,FG06, FFG16, FFG17} and
references therein.

\subsubsection{Further Ways to Model Cardinality}\label{sec:exactopt:modelingcard:other}
Another alternative to model cardinality is considered in \cite{BCWP19}
(see also \cite{BCW18}): Here, the auxiliary binary variables $y$ are
linked to $x$ in the same way as before, i.e., they essentially encapsulate
the logical constraint that if $y_i=0$, then $x_i=0$ shall hold as
well. (Indeed, ``$y_i=0~\Rightarrow~x_i=0$'' is a special case of an
\emph{indicator constraint}, and the reformulations discussed here can be
applied to more general such constraints, see, e.g.,
\cite{BBFLMNGS16,BLTW15} for detailed discussions.) The key observation
then is that one can replace $x_i$ by $y_i x_i$ throughout the problem
formulation; indeed, any $x_i$ then only contributes\footnote{Note that,
  however, $x_i\neq 0$ would then in principle be possible even if
  $y_i=0$. While this does not influence feasibility or the optimal
  solution value in the sense of the original formulation, it needs to be
  considered when extracting the optimal solution. There, the corresponding
  $x_i$ can w.l.o.g.\ be set to zero. \cite{BCWP19,BCW18} propose to add a
  ridge regularization term $\gamma\norm{x}_2^2$ to the objective for
  algorithmic reasons, which automatically enforces that $y_i=0$ indeed
  implies $x_i=0$ in an optimal solution.} to a constraint or the objective
if $y_i=1$. The resulting mixed-integer nonlinear problems considered in
\cite{BCWP19,BCW18} are solved by an \emph{outer-approximation} scheme that
is shown to often work more efficiently than using black-box MINLP
solvers. The general technique is well-known and quite broadly applicable,
cf. \cite{DG86,FL94}. The main idea is a decomposition of the problem that
allows for repeatedly solving an outer problem involving only the binary
variables $y$, and an inner problem that can be solved efficiently (for a
fixed $y$) and provides subgradient cuts (i.e., linear inequalities based
on the subgradient of the inner problem, which can be seen as a convex
function in $y$) that refine the outer problem. Note that the same
arguments as for (half-)complementarity constraints would allow to
linearize various constraints (e.g., linear inequalities) in the context of
the replacement-reformulation technique mentioned earlier (replacing $x_i$
by $y_i x_i$ directly as suggested in \cite{BCWP19,BCW18}), which seems to
not have been tried out yet.

Interestingly, it is also possible to exactly model the cardinality of a vector $x\in\R^n$ using only continuous auxiliary variables, along with certain (nonlinear) constraints. For instance, \cite{YG16} show that
\[
    \norm{x}_0=\min\big\{\norm{u}_1 \,:\, \norm{x}_1=x^\top u,~-\ones\leq u\leq\ones\big\}.
\]
Similar but more complicated reformulations for cardinality constraints ($\norm{x}_0\leq k$) can be found in, e.g., \cite{HG14} (see also Sections~\ref{sec:exactopt:cardcons} and~\ref{sec:inexactopt:relaxation_constraints}), though it seems unclear whether those might be helpful in a cardinality minimization context.

\subsection{Cardinality Minimization}\label{sec:exactopt:cardmin}
The generic cardinality minimization problem \cardmin{$X$} can be
reformulated using auxiliary binary variables~$y$ with any of the
techniques of the previous subsection.

The big-M approach has been applied in~\cite{BNCM16,MBN19} to
\cardmin{$\norm{Ax-b}_p\leq\delta$}, for $p\in\{1,2,\infty\}$, as well as
the corresponding cardinality-constrained and -regularized problems in a
unified fashion; see also references therein for partial earlier
treatments, e.g., of \cardmin{$Ax=b$} in~\cite{KEI13}. While the resulting
mixed-integer (linear or nonlinear) problems were solved with an
off-the-shelf MIP solver in~\cite{BNCM16}, \cite{MBN19} demonstrated (for
$p=2$) that considerable runtime improvements can be achieved if the usual
LP-relaxations that form the standard backbone of modern MIP solvers are
replaced by problem-specific other relaxations, involving the $\ell_1$-norm
as a proxy for sparsity, that admit very fast first-order solution
algorithms (see Section~\ref{sec:inexactopt:surrogates} for an overview of
many such methods). Both these works apparently employ a simple heuristic
to select the big-M constant: starting with
$\M=1.1\norm{A^\top y}_\infty/\norm{y}_2^2$ (a least-squares estimate of
the maximum amplitude of $1$-sparse solutions), accept the computed optimal
solution $x^*$ if $\norm{x^*}_\infty <\M$ and restart otherwise with $\M$
increased to $1.1\M$.

As indicated in the previous subsection, we may consider computing
individual bounds on each variable (and locally tightening them within a
branch-and-bound solving process). As an example, let us consider $Ax=b$,
with the usual assumption that $\rank(A)=m<n$ (and $b\neq 0$). For
\cardmin{$Ax=b$}, we then know that the optimal value is at most $m$ (since
there exists an invertible $m\times m$ submatrix of $A$); thus, for each
$i\in[n]$, we could consider
\[
  \ell_i\define\inf\{x_i\,:\,Ax=b,~\norm{x}_0\leq m\},\qquad u_i\define\sup\{x_i\,:\,Ax=b,~\norm{x}_0\leq m\}.
\]
However, these problems may be as hard to solve to optimality as the
original problem, so one may want to consider relaxations, and one might
also encounter unboundedness (even though the original problem is bounded)
which may be non-trivial to circumvent. In particular, suppose we use a
complementarity reformulation of the cardinality constraint here:
\[
  \ell_i=\inf\{x_i\,:\,Ax=b,~x_j(1-z_j)=0~\forall\,j\in[n],~\ones^\top z\leq m,~z\in[0,1]^{n-1}\}
\]
($u_i$ analogously). Then, one could employ known relaxations of
complementarity constraints (see Section~\ref{sec:exactopt:modelingcard}
and, in particular, Section~\ref{sec:inexactopt:relaxation_constraints}) to
obtain valid values for~$\ell_i$ and~$u_i$---or detect subproblem
unboundedness---by solving the respective relaxations. Alternatively,
boundedness provided, we may combine the big-M selection heuristic from
\cite{BNCM16} outlined earlier with the bound-computation problems: For any
$i\in[n]$, let $L_i$ and $U_i$ be diagonal matrices with the bounds already
computed for variables $x_1,\dots,x_{i-1}$ on their respective diagonals,
and let $\M>0$. Then, to compute a lower bound for $x_i$ (analogously for
an upper bound), we can solve
\begin{align*}
  \min\quad &x_i\\
  \text{s.t.}\quad &Ax=b,~\Diag\big((\diag(L_i),-\M\ones)\big) z \leq x\leq
                     \Diag\big((\diag(U_i),\M\ones)\big) z,\\
  &\ones^\top z\leq m,~z\in[0,1]^n
\end{align*}
repeatedly with increased $\M$ as long as the solution satisfies any of the
big-M constraints with equality. Note that the above problem is an LP and
therefore efficiently solvable in practice; for bound validity, we do not
need to retain the integrality of $z$. Note also that we could easily
integrate possible lower bounds $\underline{s}$ on the optimal cardinality
into either of the above problems by means of the inequality
$\ones^\top z\geq\underline{s}$. Depending on the problem, such bounds may
be available a priori; e.g., for \cardmin{$Ax=b$}, we trivially know that
any solution must have at least two nonzero entries unless $b$ is a scaled
version of a column of $A$ (which can easily be checked).

Another example can be found in~\cite{WO13b}, which considers a big-M
mixed-integer QP (MIQP) reformulation of
\cardmin{$(x-b)^\top Q(x-b)\leq\varepsilon$}, with $Q$ positive
definite. There, individual bounds $\ell_i$ and $u_i$ for each $x_i$ are
derived from the data and even turn out to have closed-form expressions:
\begin{align*}
  \ell_i = &\min\{ x_i\suchthat (x-b)^\top Q(x-b)\leq\varepsilon \} = b_i-\sqrt{\varepsilon\big(Q^{-1}\big)_{ii}},\\
  u_i = &\max\{ x_i\suchthat (x-b)^\top Q(x-b)\leq\varepsilon \} = b_i+\sqrt{\varepsilon\big(Q^{-1}\big)_{ii}}.
\end{align*}
Note that the feasible set of \cardmin{$(x-b)^\top Q(x-b)\leq\varepsilon$}
extends infinitely in certain directions if $Q$ is rank-deficient (i.e.,
only semi-definite). In particular, this is the case for the
correspondingly reformulated problem \cardmin{$\norm{Ax-b}_2\leq\delta$} in
the usual setting with $A\in\R^{m\times n}$, $\rank(A)=m<n$. Then, as for
\cardmin{$Ax=b$}, boundedness of the bound-computation problems has to be
ensured explicitly, for which a cardinality constraint again seems the
natural choice, and relaxation offers ways to circumvent intractability
issues.

As alluded to earlier, some problems allow reformulations or specialized
models that can avoid the need for a big-M or complementarity/bilinear
cardinality formulation. For \cardmin{$Ax=b$} and
\cardmin{$\norm{Ax-b}_\infty\leq\delta$}, \cite{JP08} proposed a
branch-and-cut algorithm that exploits a reformulation of these problems as
MaxFS instances. For instance, $Ax=b$ (with $\rank(A)=m<n$) can be
transformed into reduced row-echelon form via Gaussian elimination,
yielding an equivalent system $u+Rv=r$; a minimum-support solution for this
can be found by finding a maximum feasible subsystem of the infeasible
system $u+Rv=r,u=0,v=0$. A characterization of minimally infeasible
subsystems (the complements of maximal feasible subsystems) by means of the
so-called alternative polyhedron (cf.~\cite{GR90,PR96}) then yields a
binary IP model with exponentially many constraints that are separated and
added to the model dynamically within a branch-and-bound solver framework;
see \cite{JP08,P08,APT03} for the details. At the time of publication, this
branch-and-cut method could only solve rather small instances to
optimality. The scheme also incorporates several heuristics for the MaxFS
(or MinIISC) problem, adapted to the resulting special instances, with one
noteworthy conclusion being that the common $\ell_1$-norm minimization
approach may not be the best choice.

For the problem \cardmin{$Ax=0,x\neq 0$}, i.e., computing $\spark(A)$, note
that any feasible vector lies in the nullspace of a matrix and, therefore,
can be scaled arbitrarily without compromising its feasibility or affecting
its $\ell_0$-norm. Thus, every value $\M>0$ works in a big-M cardinality
modeling approach. Spark computation is discussed in detail in~\cite{T19};
in particular, a formulation with $\M=1$ was employed and---utilizing
additional auxiliary binary variables to model the nontriviality constraint
$x\neq 0$---the resulting MIP was given as
\begin{equation}\label{eq:sparkMIP}
  \min\left\{\ones^\top y\,:\,Ax=0,~-y+2z\leq x\leq y,~\ones^\top z=1;~y,z\in\{0,1\}^n,~x\in\R^n\right\};
\end{equation}
see also analogous MIP models and/or exact algorithms for the cospark,
i.e., vector matroid cogirth problem, in \cite{CCD07,KPD11,AH14}. Here,
only one of the $z$-variables can become $1$, and $z_i=1$ implies
$y_i=x_i=1$, thus ensuring $x\neq 0$ and also eliminating sign symmetry (if
$Ax=0$, then also $A(-x)=0$). Moreover, by exploiting relations to matroid
theory, \cite{T19} proposed the following pure binary IP model for the
spark computation problem \cardmin{$Ax=0,x\neq 0$}:
\begin{equation}\label{eq:sparkIP}
  \min\left\{\ones^\top y\,:\,\ones^\top y_{B^c}\geq 1\quad\forall\,B\subset[n]:~\abs{B}=\rank(A_B)=m;~y\in\{0,1\}^n\right\},
\end{equation}
where $B^c\define[n]\setminus B$. %and $y_S$ or $A_S$ denote the restriction
%of $y$ or $A$ to entries or columns indexed by~$S$, respectively. %def.'d in 1.1
This formulation avoids an explicit representation of~$x$ altogether, at the
cost of having an exponential number of constraints.  Nevertheless, these
constraints can be separated in polynomial time by a simple greedy method,
and~\cite{T19} devises a problem-specific branch-and-cut method combining
the above model~\eqref{eq:sparkMIP} with dynamic generation of the
inequalities from~\eqref{eq:sparkIP} (and some other valid inequalities),
and incorporating dedicated heuristics, propagation and pruning rules as
well as a branching scheme. Using numerical experiments detailed
in~\cite{T19} as an example, Figure~\ref{fig:spark_solved_over_time}
illustrates a key point we wish to emphasize for COPs in general---namely,
that (on average) dedicated solvers can solve more instances more quickly
than by simply plugging a compact model into a general-purpose MIP solver,
and prove better quality guarantees in cases that took unreasonably long to
solve to optimality.

\begin{figure}[t]
  \centering
  \includegraphics[width = 0.5\textwidth]{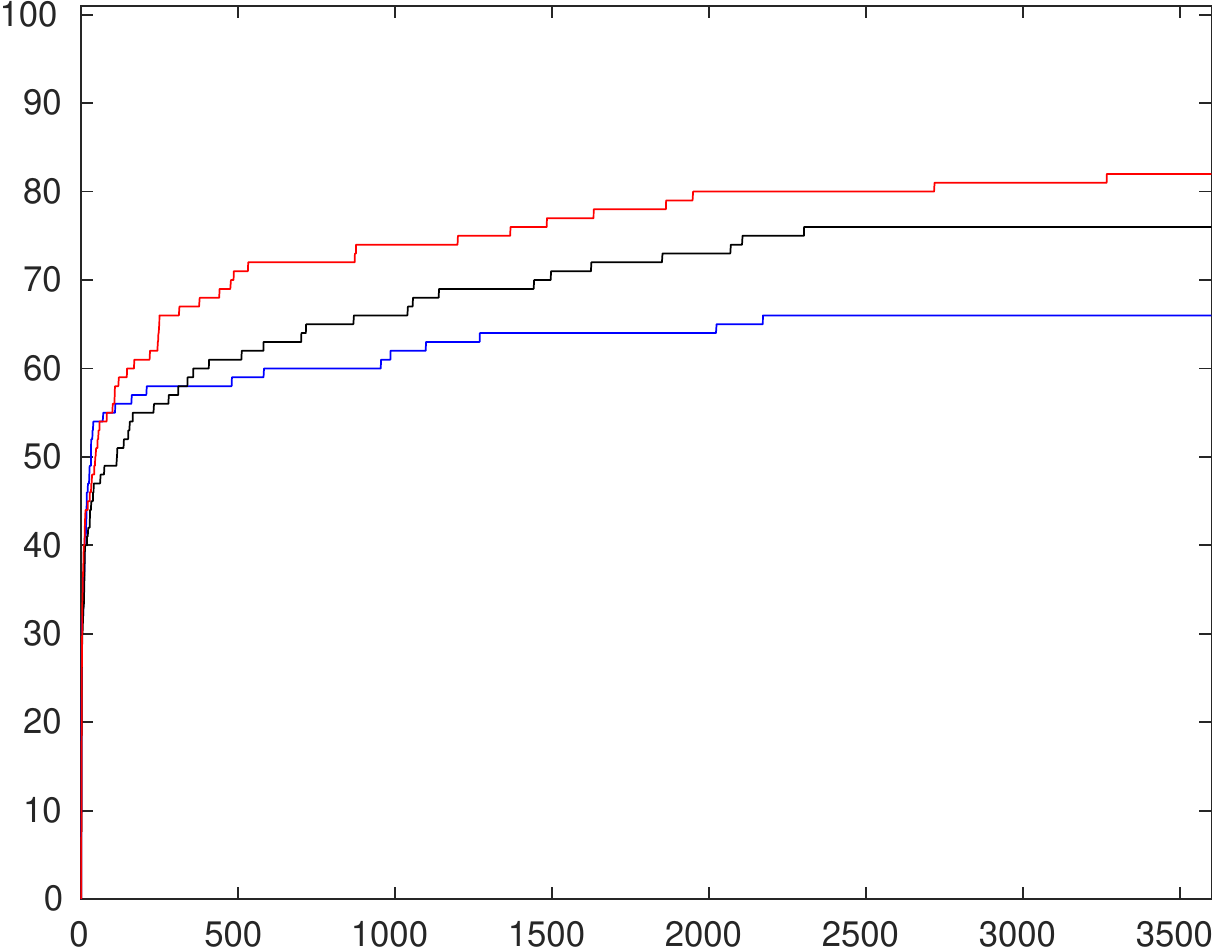}
  \caption{\footnotesize\itshape Results from~\cite{T19}: running time (in seconds; horizontal
    axis) versus number of instances solved to optimality (out of 100;
    vertical axis) for the commercial MIP solver CPLEX~\cite{cplex} applied to the
    compact spark model (blue), the pure binary-IP spark solver (SparkIP;
    black) and the combined MIP spark solver (SparkMIP; red); the latter
    two were implemented in SCIP~\cite{scip} and employed CPLEX as LP-relaxation
    solver. CPLEX achieved the smallest final optimality gap 66 times
    (i.e., only on those instances it could solve to optimality within the
    time limit, which were also solved by the others), SparkIP 79 times,
    and SparkMIP 95 times.}
  \label{fig:spark_solved_over_time}
\end{figure}

A binary IP formulation analogous to~\eqref{eq:sparkIP} can also be given
for \cardmin{$Ax=b$}, based on the following result (we omit its
straightforward proof):

\begin{lem}\label{lem:l0minCovIneqs}
  A set $\emptyset\neq S\subseteq[n]$ is a (inclusion-wise minimal)
  feasible support for~$x$ w.r.t.\ $Ax=b$ if and only if
  $S\cap B^c\neq\emptyset$ for all (maximal) infeasible supports $B$.
\end{lem}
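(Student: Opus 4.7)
The plan is to reduce the statement to a single structural observation and then deduce both the feasibility characterization and its minimality refinement as straightforward consequences. Call $S\subseteq[n]$ a \emph{feasible support} for $Ax=b$ if $b$ lies in the column span of $A_S$, i.e., there exists $x\in\R^n$ with $Ax=b$ and $\supp(x)\subseteq S$. The key observation is that feasibility of supports is \emph{monotone under inclusion}: if $S$ is feasible and $S\subseteq S'$, then the same witness $x$ certifies feasibility of $S'$; equivalently, every subset of an infeasible support is infeasible.

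Given this monotonicity, the non-parenthesized equivalence follows by contraposition. For the forward direction, if $S$ is feasible and $B$ is any infeasible support, then $S\not\subseteq B$ (otherwise $B$ would inherit the witness $x$ supported in $S$ and thus be feasible), which is precisely $S\cap B^c\neq\emptyset$. For the converse, if $S$ were itself infeasible, taking $B\define S$ in the hypothesis would yield the contradiction $S\cap S^c\neq\emptyset$; hence $S$ must be feasible. To justify restricting to \emph{maximal} infeasible $B$ as in the parenthesized reading, I would embed an arbitrary infeasible $B$ into some maximal infeasible $B^*\supseteq B$ (which exists because $[n]$ is finite); then $(B^*)^c\subseteq B^c$, so the intersection condition ranging only over maximal infeasible supports automatically implies it for all infeasible supports.

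The minimality refinement is then immediate: since the intersection condition with respect to all (maximal) infeasible supports is equivalent to feasibility of $S$, an inclusion-wise minimal $S$ satisfying the condition is exactly an inclusion-wise minimal feasible support. I do not expect any genuine obstacle here---the lemma is a standard blocker/transversal-type duality, and the entire argument rests on the monotonicity of feasibility of column supports---but the statement is nevertheless essential for justifying the analogue of the binary IP model~\eqref{eq:sparkIP} for \cardmin{$Ax=b$} that the lemma is invoked to support.
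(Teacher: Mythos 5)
Your proof is correct: the upward-closedness of feasible supports (equivalently, downward-closedness of infeasible ones) immediately gives both directions, the reduction to maximal infeasible $B$, and the correspondence between minimal covering sets and minimal feasible supports. The paper explicitly omits the proof as straightforward, and your argument is exactly the intended blocker-type reasoning, so there is nothing to compare beyond noting agreement.
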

This IP then reads
\begin{equation}\label{eq:sparseRecoveryIP}
  \min\left\{\ones^\top y \,:\, \ones^\top y_{B^c}\geq 1\quad\forall\text{(max.) infeas.\ supports }B;~y\in\{0,1\}^n\right\}.
\end{equation}
In fact, Lemma~\ref{lem:l0minCovIneqs} and~\eqref{eq:sparseRecoveryIP}
extend directly to \cardmin{$\norm{Ax-b}_\infty\leq\delta$}; however, in
contrast to the spark case~\eqref{eq:sparkIP}, the separation problem for
the inequalities in~\eqref{eq:sparseRecoveryIP} w.r.t.\ maximal infeasible
supports can be shown to be generally \NP-hard~\cite{TP12}. The approach is
strongly related to the model from~\cite{JP08} discussed earlier, which
essentially splits the support into that of the positive and negative parts
of~$x$, respectively. This split seems to have some structural advantages
w.r.t.\ greedy separation heuristics, even though the underlying IP has
(roughly) twice as many variables due to the transformation to
MaxFS/MinIISC.

Note that one can also make use of the spark IP formulation, and (with
slight modifications) the solver from~\cite{T19}, to tackle
\cardmin{$Ax=b$}: While \cardmin{$Ax=0$, $x\neq 0$} searches for the overall
smallest circuit of the vector matroid induced by the columns of $A$,
\cardmin{$Ax=b$} can be viewed as seeking the smallest circuit of the
vector matroid over $(A,-b)$ that mandatorily contains the right-hand-side
column (see also~\cite{CCD07}). Thus, \cardmin{$Ax=b$} is equivalent to
\begin{align}
\label{eq:sparseRecoveryViaSparkIP}  \min\quad &\ones^\top y\\
\nonumber  \st &\ones^\top y_{B^c}\geq 1\quad\forall\,B\subset[n]:~\abs{B}=\rank((A_B,-b))=m,~y\in\{0,1\}^{n},
\end{align}
i.e., the covering-type inequalities hold for all complements of bases of
the matroid over the columns of $(A,-b)$ that contain $n+1$. Indeed, it can
easily be seen that bases containing the $(n+1)$-th column of $(A,-b)$
correspond to infeasible supports (w.r.t.\ $Ax=b$) from
Lemma~\ref{lem:l0minCovIneqs} once that column is removed. While these
infeasible supports are not necessarily maximal (there could be columns of
$A$ that are linearly dependent on the ones contained in the basis and thus
could be included in a linear combination without changing infeasibility),
the separation problem can still be solved by a greedy method, unlike for
the covering inequalities corresponding to \emph{maximal} infeasible
supports.  We will explore big-M selection and the approach to solve
\cardmin{$Ax=b$} via \eqref{eq:sparseRecoveryViaSparkIP} a bit further as
an illustrative example in Section~\ref{sec:card_min_example} below.

Besides tightening big-M bounds as discussed earlier, it has been shown in
se\-veral cardinality minimization applications that standard
(general-purpose) branch-and-bound MIP solvers can be improved
significantly by exploiting problem-specific heuristics and relaxations (or
methods tailored to the specific structure of the relaxations encountered
in the process of solving the MIPs), see, e.g.,
\cite{MBN19,WO13a,WO13b,T19}. Of course, this also holds for certain
mixed-integer nonlinear programming (MINLP) formulations. An example is the
efficient heuristic providing high-quality starting solutions for the exact
branch-and-cut scheme (including specialized cuts and branching rules) for
\cardmin{$\norm{Ax-b}_2\leq\delta$,\,$\abs{x}\in\{0,1\}$,\,$x\in\C^n$}
recently proposed in~\cite{FHMPPT18}.

A general impression, which may be gleaned from all the MIP efforts in the
aforementioned references is that \emph{finding} good or even optimal
solutions is often possible with dedicated heuristics (including running
MIP solvers for a limited amount of time), but that \emph{proving}
optimality is hard not only in theory but also in practice.

Finally, we point out that cardinality \emph{minimization} problems could
also be solved by means of a sequence of cardinality-\emph{constrained}
problems with $\norm{x}_0\leq k$ for $k=1,2,\dots$, until the first
feasible subproblem is found (indicating minimality of the respective
cardinality level); of course, one could also apply, e.g., binary search in
this context. Depending on the concrete constraints, since this sequential
approach leaves the possibility to choose an arbitrary objective function,
one could simplify the feasible set by moving parts of it into the
objective---for instance, \cardmin{$\norm{Ax-b}\leq\delta$} could be
tackled by solving \cardcons{$\norm{Ax-b}$}{$k$}{$\R^n$} for $k=1,2,\dots$
until the first subproblem with optimal objective value of at most~$\delta$
was found.

\subsubsection{An Illustrative Example}\label{sec:card_min_example}
Since~\eqref{eq:sparseRecoveryViaSparkIP} appears to be novel, we adapted
the spark-specific code from~\cite{T19} to this variant in order to, in the
following, provide an illustrative example on how heavily exploiting
problem-specific knowledge can significantly improve the performance of
exact solvers versus black-box models. The following example consists of
the matrix~$A$ from instance 67 from~\cite{T19} (a $192\times 384$ binary
parity-check matrix of a rate-$1/2$ WRAN LDPC code) and a right-hand side
$b\define Ax$, where $x\in\R^{384}$ has $30$ nonzero components with
positions drawn uniformly at random and entries drawn i.i.d.\ from the
standard normal distribution. The (modified) spark code is implemented
using the open-source MIP solver SCIP \cite{scip}, which we also employ as
a black-box solver for the standard big-M formulation of \cardmin{$Ax=b$},
i.e.,
\begin{equation}\label{eq:l0minMIP}
  %\min \ones^\top y\quad\st\quad Ax=b,~-\M y\leq x\leq \M y,~y\in\{0,1\}^n.
  \min \big\{ \ones^\top y\,:\, Ax=b,~-\M y\leq x\leq \M y,~y\in\{0,1\}^n\big\}.
\end{equation}
We will consider different choices of $\M$, to illustrate how the quality
of the bounds greatly influences solver efficiency. Specifically, we solve
the problem for conservative choices $\M=1000$ and $\M=100$, an
``optimistic'' choice $\M=10$, as well as $\M=3.9$ (corresponding
approximately to the $99.99\%$ quantile of the standard normal
distribution) and $\M=2.6$, the largest absolute value of the entries in
the generated $x$ (rounded up to one significant digit). Thus, in terms of
uniform bounds for all variables, the latter two exploit knowledge of at
least the distribution of the ground-truth signal vector $x$ and are unrealistically tight, and highly instance-specific. For the problem-specific IP solver, which
does not require big-M values, we illustrate how adding different
components that are tailored to the problem under consideration yields an
increasingly faster algorithm (we refer to~\cite{T19} for detailed
descriptions and omit stating the straightforward modifications to
implicitly handle $Ax=b$ rather than $Ax=0$): Version (i) is characterized
by the basic model~\eqref{eq:sparseRecoveryViaSparkIP} with separation of
covering inequalities as well as generalized-cycle inequalities (but no
local cuts), version (ii) adds a propagation routine (to infer, e.g.,
further variable fixings after branching, and execute problem-specific
pruning rules), (iii) additionally incorporates the well-known
$\ell_1$-minimization problem $\min\{\norm{x}_1:Ax=b\}$ as a primal
heuristic, and finally, in version (iv), we turn off several costly primal
heuristics that are part of SCIP but were not helpful (in particular,
diving and rounding heuristics).  The results are summarized in
Table~\ref{tab:exampleL0minBigMvsSparkIP}; all experiments were run in
single-thread mode under Linux on a laptop (Intel Core i7-8565U CPUs %@
1.8\,GHz and 8\,GB memory); we used the LP-solver SoPlex 5.0.0 that comes
with SCIP 7.0.0 to solve all relaxations.

\begin{table}[t]
  \begin{center}
  \caption{\footnotesize\itshape Runtimes and number of node for model/solver variants on the example instance of \cardmin{$Ax=b$}.}
  \label{tab:exampleL0minBigMvsSparkIP}
  \footnotesize % SIAM style guide: tables are set in 8pt... 
  \begin{tabular*}{\textwidth}{@{\extracolsep{\fill}}@{~}lr@{\qquad}rr@{\quad}}\toprule          
    solver/model              & $\M$ & runtime [s] & nodes \\
    \midrule
    black-box SCIP big-M-MIP  & 1000 & 544.1 & 9503\\
    black-box SCIP big-M-MIP  &  100 &  86.6 &  859\\
    black-box SCIP big-M-MIP  &   10 &  43.1 &  423\\
    black-box SCIP big-M-MIP  &  3.9 &  25.5 &    5\\
    black-box SCIP big-M-MIP  &  2.6 &   3.5 &    1\\[0.5em]  
    modified Spark-IP (i)     &   -- &  62.2 &  621\\
    modified Spark-IP (ii)    &   -- &  40.7 &  377\\
    modified Spark-IP (iii)   &   -- &  20.7 &  165\\
    modified Spark-IP (iv)    &   -- &  15.6 &  149\\
    \bottomrule
    \end{tabular*}
  \end{center}
\end{table}

The experiment clearly shows that the black-box approach profits greatly
from good big-M values. Nevertheless, it is important to keep in mind that
in practical applications, one may not have sufficiently useful information
(such as exploiting knowledge of the ``signal'' distribution to come up
with a reasonable guess like $\M=3.9$ in the above example). If, on the
other hand, explicit bounds are known for $x$, then the approach may work
quite well and is certainly worth a try. However, even if good guesses
(like the $3.9$ above) are available, a dedicated problem-specific solver
may still achieve significant performance improvements. Here, in its ``most
sophisticated'' variants (iii) and (iv), the modified spark computation
code outperforms the black-box solver by at least 20\% in terms of running
time. Even the more rudimentary version (i) is significantly faster than
the big-M approach if no knowledge regarding a good choice of $\M$ is
available and one needs to choose a relatively large $\M$ ($100$ or $1000$)
to be on the safe side. Finally, note that one could, in principle, merge
the two approaches (i.e., use the MIP model~\eqref{eq:l0minMIP} with
dynamically added cuts derived from~\eqref{eq:sparseRecoveryViaSparkIP});
for \cardmin{$Ax=0,x\neq 0$}, such an approach indeed turned out to be
beneficial (see~\cite{T19}), but recall that there were no big-M selection
issues due to scalability of nullspace vectors.

\subsection{Cardinality-Constrained Optimization}\label{sec:exactopt:cardcons}
Naturally, the techniques discussed in
Section~\ref{sec:exactopt:modelingcard} can also be used to reformulate
cardinality-\emph{constrained} problems. For instance, in the presence of
variable bounds $\ell\leq x\leq u$ (possibly of a big-M nature), the
general problem \cardcons{$f$}{$k$}{$X$} can be written as
\begin{align*}
  \min\left\{f(x)\,:\, L y\leq x\leq U y,~\ones^\top y\leq k,~x\in X,~y\in\{0,1\}^n\right\},
\end{align*}
where $U\define\Diag(u)$ and $L\define\Diag(\ell)$. Similarly, the
reformulations using comple\-men\-tarity-type constraints can be employed
in an analogous fashion, although the resulting theoretical properties may
differ in some fine points. For the sake of brevity, we omit the
straightforward details. 

It is also possible to algebraically formulate cardinality constraints 
on vectors, as well as rank constraints on matrices, using continuous auxiliary variables and a set of linear constraints plus one bilinear inequality, see~\cite{HG14}. Somewhat surprisingly, it seems that these reformulations are not very well known and have, to our knowledge, hardly been employed in practical algorithms thus far. The key result for vector sparsity is \cite[Thm.~1]{HG14}: $x\in\R^n$ satisfies $\norm{x}_0\leq k$ if and only if there exist $t\in\R$ and $y,q,w\in\R^n$ such that 
\[
  \norm{q}_1+(k+1)\norm{w}_\infty\leq t\leq x^\top y,\quad x=q+w,\quad \norm{y}_1\leq k,\quad\norm{y}_\infty\leq 1;
\]
note that the $\ell_1$- and $\ell_\infty$-norm terms can be linearized as usual. For the ana\-logous result on rank constraints, see \cite[Thms.~2 and~3]{HG14}. 
The reformulations from~\cite{HG14} are closely related to the sum of the~$k$ largest absolute values of entries in the vector case, or singular values in the matrix case, respectively (see also the ``trimmed LASSO'' discussed at the end of Section~\ref{sec:inexactopt:relaxation_constraints}). A related characterization of a cardinality constraint can be derived from~\cite{YG16}, where it is shown that $\norm{x}_0=\min\{\norm{u}_1 : \norm{x}_1=x^\top u,~-\ones\leq u\leq\ones\}$ for any $x\in\R^n$, so consequently,
\begin{equation}\label{cardconsYG16}
    \norm{x}_0\leq k\quad\Leftrightarrow\quad\norm{u}_1\leq k,~\norm{x}_1=x^\top u,~-\ones\leq u\leq\ones.
\end{equation}

It is noteworthy that bound-computation problems may be easier for
cardinality-constrained problems than for cardinality-minimization: To
ensure validity of computed bounds, it suffices to ensure that a known
upper bound on the minimum objective value is not exceeded
(cf.~\cite{BBFLMNGS16})---for problems of the class \cardmin{$X$}, this
unfortunately leads to (generally intractable) cardinality
constraints. Here, the cardinality constraint can actually be omitted (or,
more precisely, relaxed to $\norm{x}_0\leq n$), so bound-computation
problems may look like
\[
  \inf_x/\sup_x\left\{ x_i \,:\, f(x)\leq \bar{f},~x\in X\right\}.
\]
For instance, \cite{BKM16} suggests the data-driven bounds
$\inf/\sup\{x_i : \norm{Ax-b}_2\leq\bar{f}\}$ for
\cardcons{$\norm{Ax-b}_2$}{$k$}{$\R^n$}, which are simple convex problems.
The required bound~$\bar{f}$ on the optimal objective value can be obtained
by any heuristic, or possibly analytically.

Thus, in particular, the exact branch-and-cut solvers of~\cite{YMP19} (and
some earlier works referenced therein) for LPCCs can be used if $f$ is an
affine-linear function.  Moreover, the polyhedral results (valid
inequalities for polytopes with cardinality constraints) from the
references given at the end of Section~\ref{sec:exactopt:modelingcard}, as
well as the aforementioned branching schemes (e.g., \cite{DFJN01}) can also
be applied in the present general context. Note that~\cite{B96} describes a
branching rule that allows to avoid auxiliary binary variables.

An exact mixed-binary minimax (or outer-approximation) algorithm was
developed in~\cite{BVP17,BPVP17} for the sparse SVM problem
\cardcons{$L(w,b)$}{$k$}{$(w,b)\in\R^{n+1}$}, subsuming a ridge
regularization term in $L$ (cf. Section~\ref{sec:concreteprobs:statML}),
with encouraging performance in the context of logistic regression and
hinge loss sparse SVM. It was later extended to more general MIQPs,
including, in particular, the portfolio selection problem,
see~\cite{BCW18,BCWP19}. This method is the latest in a series of exact
algorithm proposals for variants of MIQPs with cardinality constraints,
often focusing on portfolio optimization applications, that includes, in
particular, \cite{B96,SLK08,BS09,BL09,GL13a,GL13b,BMP13,CST13,CZZS13}. A
recent survey of models and exact methods for portfolio selection tasks,
including cases with cardinality constraints, is provided by~\cite{MDA19};
another fairly broad overview of MIQP with cardinality constraints can be
found in~\cite{ZSLS14}. A MIQP algorithm for the special case of feature
selection (or sparse regression), \cardcons{$\norm{Ax-b}_2$}{$k$}{$\R^n$},
was proposed in~\cite{BKM16}, including the aforementioned ways to compute
tighter big-M bounds; some statistical properties of such sparse regression
problems and relations to their regularized versions are discussed in,
e.g., \cite{ZZ12,SPZZ13}. Other tweaks of the straightforward big-M MIQP
approach are discussed in~\cite{KCF18} (see also \cite{AG19}). Introducing
a ridge regularization term to the regression objective, \cite{BVP17}
recast the problem as a binary convex optimization problem and propose an
outer-approximation solution algorithm that scales to large dimensions, at
least for sufficiently small~$k$. A different (big-M free) MIQP formulations is considered in~\cite{XD20}, which also includes an analysis of different relaxation bounds and a numerical comparison with the method from~\cite{BVP17} and some existing and novel heuristics in a large-scale setting. A similarly scalable problem-specific branch-and-bound method for a MIQP model of the corresponding regularized problem---i.e., minimizing a weighted objective with an $\ell_2$ data fidelity, an $\ell_0$ cardinality, and a ridge penalty term---is discussed in~\cite{HMR21b}. An extension of this method to group sparsity is described in~\cite{HMR21}, where both exact and approximate solutions are considered. It is also possible to recast cardinality-constrained least-squares problems with ridge penalty as
mixed-integer semidefinite programs (MISDPs), see~\cite{PWEG15,GPU18}, but
those can only be solved exactly for small-scale instances, despite
providing stronger relaxations. For the sparse PCA problem
\cardcons{$x^\top Qx$}{$k$}{$x^\top x=1$}, two exact MIQP solvers were very
recently developed in \cite{DMW18} and \cite{BB19}.

It is also worth mentioning that simple cardinality-constrained problems
with separable objective function $\phi(x)=\sum_{i=1}^{n}\phi_i(x_i)$ and
$X=X_1\times\cdots\times X_n$ with $0\in X_i$ for all $i$, admit a
closed-form solution, see~\cite{LZ13}.

Portfolio optimization seems to be the showcase example for cardinality
constraints. Therefore, in the following, we provide some more details on
the formulation of such problems as MIQPs, along with a few numerical
experiments to shed some light on their practical solution with black-box
MIP solvers.

\subsubsection{Illustrative Example: Cardinality-Constrained Portfolio Optimization Problems}\label{sec:exactopt:cardcons:example}
The classical Markowitz mean-variance optimization problem (cf.~\cite{M52})
can be described as follows:
\begin{equation}\label{harry}
  \min\big\{\lambda x^\top Q x - \bar{\mu}^\top x\,:\,A x \geq b\big\}.
\end{equation}
%\begin{subequations} \label{harry}
%\begin{align}
%  \min\quad&\lambda x^\top Q x \ - \ \bar \mu^\top x \label{portobj}\\
%  \st & A x \geq b.
%\end{align}
%\end{subequations}
Here, $x \in \R^n$ is a vector of \textit{asset positions},
$Q\in\R^{n \times n}$ is the \textit{sample covariance matrix of asset returns},
$\bar \mu \in \R^n$ the \textit{vector of average asset returns},
$\lambda \geq 0$ is a \textit{risk-aversion multiplier}, and $A x \geq b$
are generic linear \textit{portfolio construction requirements}. The
objective of~\eqref{harry} represents a tradeoff between risk and portfolio
performance.  In the simplest form, the linear requirements for feasible
portfolios are
\begin{subequations} \label{typical}
  \begin{align}
    & \sum_{j = 1}^n x_j \, = \, 1, \label{convexity}\\
    & x \ge 0. \label{longonly}
    \end{align}
\end{subequations}    
In this case, $x_j \geq 0$ represents the percentage of a portfolio
invested in an asset $j$.

Modern versions of problem \eqref{harry} incorporate features that require
binary variables. There is a large literature that addresses such features,
see, e.g. \cite{B96,BS09,CST13}. A critical feature is a
\textit{cardinality constraint} on the number of positions to be taken,
e.g., an upper bound on the number of nonzero~$|x_j|$. Here, we detail
typical portfolio optimization/management constraints along with their
respective practical motivation and (numerical) aspects to consider when
building and solving such models. Moreover, we discuss some experiments
using a recent version of the commercial MI(Q)P solver Gurobi~\cite{gurobi}
on formulations that incorporate several modern features, using real-world
data.
\begin{itemize}
\item {\bf Long-short portfolios.} In the modern practice, an asset $j$ can
  be ``long'', ``short'' or ``neutral'', represented, respectively, by
  $x_j > 0$, $x_j< 0$ or $x_j= 0$. We can write, for any asset $j$,
  $x_j = x^+_j - x^-_j$,
  %\begin{align*}
  %  & x_j \ = \ x^+_j - x^-_j, %\label{finalj}
  %\end{align*}
  with the (important) proviso that $x^+_j x^-_j = 0$. This complementarity
  constraint provides an example of the use of binary variables, as
  discussed earlier: The problem will always be endowed with upper bounds
  on $x^+_j$ and $x^-_j$; denote them by $u^+_j$ and $u^-_j$,
  respectively. Then, to ensure $x^+_j x^-_j = 0$, we write
  \begin{align}
    & x^+_j \le u^+_j y_j, \quad x^-_j \le u^-_j(1 - y_j), \quad y_j \in \{0,1\}. \label{selectsidej}
  \end{align}
  A portfolio manager may also seek to limit the total exposure in the long
  and short side. This takes the form of respective constraints
  \begin{align*}
    & L^+ \, \le \, \sum_{j = 1}^n x^+_j \, \le \, U^+ \quad \text{and} \quad
      L^- \, \le \, \sum_{j = 1}^n x^-_j \, \le \, U^-, %\label{exposures}
  \end{align*}
  for appropriate nonnegative quantities $L^\pm$ and $U^\pm$. %$L^+, U^+, L^-$, and $U^-$.  
  A constraint of the form \eqref{convexity} does not make sense in a
  long-short setting; instead one can impose
  $\sum_{j = 1}^n( x^+_j + x^-_j) = 1$ (together with
  \eqref{selectsidej}). Additionally, one may impose upper and lower bounds
  on the ratio between the total long and short exposures.

\item {\bf Portfolio update rules.} In a typical portfolio management
  setting, a portfolio is being updated rather than constructed ``from
  scratch''.  Each asset $j$ has an \textit{initial position} $x^0_j$ which
  could itself be long, short or neutral. Thus, we can write
  \begin{align*}
    & x_j \ = \ x^0_j + \delta^+_j - \delta^-_j, %\label{movej}
  \end{align*}
  where $\delta^+_j \ge 0$ and $\delta^-_j \ge 0$ are the changes in the
  long and short direction, respectively. These quantities may themselves
  be (individually) upper- and lower-bounded, and the same may apply to the
  sums $\sum_{j = 1}^n \delta^+_j$ and $\sum_{j = 1}^n \delta^-_j$.

\item {\bf Cardinality constraints.} As stated above, a typical requirement
  is to place an upper bound on the number of nonzero positions $x_j$. We
  can effect this through the use of binary variables, by repurposing the
  binary variable $y_j$ introduced above, introducing a new binary variable
  $z_j$, and imposing
  \begin{align*}
    & x^+_j \leq u^+_j y_j, \quad x^-_j \leq u^-_j z_j, \quad z_j \leq 1 - y_j, \quad z_j \in \{0, 1\}, \\
    & \sum_{j = 1}^n (y_j + z_j) \ \leq \ k,
  \end{align*}
  where $k > 0$ is the upper bound on the number of nonzero positions.
  However, this is not the only case a cardinality constraint may be
  needed. Such rules may also apply, for example, to specific subsets of
  assets (e.g., within a certain industrial sector).

\item {\bf Threshold rules.}  When $n$ is large and $\lambda$ is large, the
  standard mean-variance problem may produce portfolios that include assets
  in minute quantities. Hence, a manager may seek to enforce a rule that
  states that an asset is either neutral (i.e. $x_j = 0$) or takes a
  position that is ``large enough'', resulting in so-called semi-continuous
  variables. We can reuse the binary variables just described, for this
  purpose: For any asset $j$, we constrain
  \begin{align*}
    & x^+_j \geq \theta^+_j y_j \quad \mbox{and} \quad x^-_j \geq \theta^-_j z_j,
  \end{align*}
  where $\theta^+_j$ and $\theta^-_j$ are the respective threshold values.
  In addition, we may apply similar rules to the $\delta^\pm$ quantities
  introduced above (so as to deter unnecessary movements).

\item {\bf Reduced-rank approximations of the sample covariance matrix $Q$.}
  Typical covariance matrices arising in portfolio management have high
  rank (usually, full rank) but with many tiny eigenvalues. In fact, the
  spectrum of such matrices displays the usual ``real-world'' behavior of
  rapidly declining eigenvalues. For instance, if $n = 1000$ (say), only
  the top $200$ eigenvalues may be significant, and of those, the top $50$
  will dominate. Usually the top eigenvalue is significantly larger than
  the second largest, and so on.
   
  Let us consider the spectral decomposition of $Q = V \Omega V^\top$,
  where $V$ is the $n \times n$ matrix whose columns are the eigenvectors
  of $Q$ and $\Omega = \diag (\omega_1, \omega_2, \ldots, \omega_n)$ is the
  diagonal matrix holding the respective eigenvalues
  $\omega_1 \geq \omega_2 \geq \ldots \geq \omega_n \ (\geq 0)$.  We can
  then approximate
  \begin{align}
    & Q \approx \sum_{i = 1}^H \omega_i v_i v_i^\top, \label{Qapprox}
  \end{align}
  where $v_i$ is the $i$-th eigenvector of $Q$ and $H \leq n$ is
  appropriately chosen. The primary reason (as seen by practitioners) for replacing the sample covariance
  matrix with the approximation in \eqref{Qapprox} is that by doing so one removes ``noise'', i.e., that one obtains a better
  representation of the ``true'' underlying covariance matrix.

  Denoting by $V^H$ the $H \times n$ matrix whose
  $i$-th row (for $1 \leq i \leq H$) is $v_i^\top$, the objective of
  problem \eqref{harry} can be written as
  \begin{align}
    \min\quad&\lambda \sum_{i = 1}^H \omega_i f^2_i - \bar{\mu}^\top x, \label{portobj2}
  \end{align}
  where the $f_i$ are new variables, subject to the constraint $V^H x=f$. 
  %\begin{align*}
  %  & V^H x  =  f.
  %\end{align*}
  As stated above, the choice of $H$ hinges on how quickly the eigenvalues
  $\omega_i$ decrease. A prematurely small choice for $H$ may result in a
  poor approximation to $Q$, and a large choice yields a formulation with
  very small parameters. One can overcome these issues by relying on the
  \textit{residuals}, that is to say the quantities
  \begin{align}
    & \rho_j \ \define \left(Q - \sum_{i = 1}^H
      \omega_i v_i v_i^\top\right)_{jj} = \ Q_{jj} - \sum_{i = 1}^H \omega_i v_{ij}^2, \quad  j \in [n].
  \end{align}  
  These quantities are nonnegative since
  $Q - \sum_{i = 1}^H \omega_i v_i v_i^\top = \sum_{i = H+1}^n \omega_i v_i
  v_i^\top $ is positive semidefinite. Moreover, in general, the $\rho_j$
  should be small if the approximation \eqref{Qapprox} is good (namely,
  since $\sum_{i = H+1}^n \omega_i v_i v_i^\top$ is diagonal-dominant). The
  residuals can be used to update the objective \eqref{portobj2} as
  follows:
  \begin{align}
    \min\quad&\lambda \sum_{i = 1}^H \omega_i f^2_i \, + \lambda \sum_{j = 1}^n \rho_j x_j^2 \ \ - \ \bar \mu^\top x. \label{portobj3}
  \end{align}  
  Some of the $\rho_j$ may be extremely small---in such a case, it is
  numerically convenient to replace them with zeros.
\end{itemize}

\subsubsection{Portfolio Optimization Example: Experiments}\label{sec:exactopt:cardcons:experiments}
We next outline the results on a challenging instance with the following
attributes:
\begin{itemize}
\item $n = 741$ with data from the Russell 1000 Index (made publicly
  available by the authors of \cite{BCW18})
\item Full covariance matrix
\item Cardinality limit $k=50$ with all threshold values set at
  $\theta^+_j = \theta^-_j = 0.01$
\item Long-short model with maximum and minimum long exposures set at
  $U^+=0.5$ and $L^+=0.4$, respectively, and maximum short exposure set at
  $U^-=0.2$ (and no minimum short exposure, i.e., $L^-=0$)
\end{itemize}
The above formulation was run using Gurobi 9.1.1 on a machine with 20
physical cores (Intel Xeon E5-2687W v3, 3.10\,GHz) and 256\,GB of
RAM. Table~\ref{table:gurobitest13} summarizes the observed performance
using default settings.

\begin{table}[t]
  \centering      
  \caption{\footnotesize\itshape Behavior of solver on cardinality-constrained portfolio optimization problem.}\label{table:gurobitest13}
  \footnotesize % SIAM style guide sets tables in 8pt...
  \begin{tabular*}{\textwidth}{@{\extracolsep{\fill}}@{\quad}rr@{\qquad}rrr@{\quad}}
    \toprule                  
    nodes & incumbent & best bound & gap & runtime [s] \\
    \midrule
    0        &  25.40531  &  0.53332 & 97.9$\%$  &    14 \\
    0        &   0.86096  &  0.53332 & 38.1$\%$  &    18 \\
    15       &   0.78644  &  0.54223 & 31.1$\%$  &    25 \\
    5611     &   0.72123  &  0.55025 & 23.7$\%$  &    92 \\
    11838    &   0.71995  &  0.55025 & 23.6$\%$  &   130 \\
    106774   &   0.71904  &  0.57292 & 20.3$\%$  &   500  \\
    242365   &   0.71898  &  0.57873 & 19.5$\%$  &  1000 \\
    524934   &   0.71898  &  0.58455 & 18.7$\%$  &  2002 \\
    817290   &   0.71898  &  0.58788  & 18.2$\%$ &  3002 \\
    994932   &   0.71898  &  0.58946  & 18.0$\%$ &  3605 \\
    \bottomrule
  \end{tabular*}
\end{table}

Let us consider now the outcome when we run the same portfolio optimization
problem, but now using the approximation to the covariance matrix obtained
by taking the top $H = 250$ modes. In this case, the top eigenvalue equals
$8.10 \times 10^{-2}$ %$8e-2$
while the $250^\text{th}$ is approximately $1.57\times
10^{-4}$. Table~\ref{table:gurobitest13_pca_250} summarizes the results.

Each table provides relevant statistics concerning the corresponding run;
the rows were selected to highlight significant steps within the run (e.g.,
discovery of a new incumbent or improvement of the best lower bound) so as to
provide the reader with a qualitative understanding of the progress made by
the solver.

\begin{table}[t]
  \caption{\footnotesize\itshape Behavior of solver on approximation to instances in Table \ref{table:gurobitest13} obtained by using $H = 250$ modes.}
  \label{table:gurobitest13_pca_250}
  \centering
  \footnotesize % SIAM style guide sets tables in 8pt...
  \begin{tabular*}{\textwidth}{@{\extracolsep{\fill}}@{\quad}rr@{\qquad}rrr@{\quad}}
    \toprule              
    nodes & incumbent & best bound & gap & runtime [s] \\
    \midrule
    0    & 4.98030  & -0.00080  &  100.0$\%$   &   6 \\
    217  & 2.08509  &  0.00009  &  100.0$\%$   &  39 \\
    562  & 0.69882  &  0.00009  &  100.0$\%$   &  52 \\
    924  & 0.31362  &  0.00009  &  100.0$\%$   &  80 \\
    1048 & 0.31362  &  0.31293  &    0.2$\%$   & 105 \\           
    1067 & 0.31362  &  0.31362  &    0.0$\%$   & 116 \\           
    \bottomrule
  \end{tabular*}
\end{table}

In the first case, we ended the solving run after approximately one hour of
elapsed time with a remaining optimality gap of about 18\,\%. In the second
case (cf.~Table~\ref{table:gurobitest13_pca_250}), the solver is able to
close the gap so as to attain optimality within tolerance after about two
minutes. We stress that such reduced-rank problems are not always
significantly easier than their full-rank counterparts, but, overall, they
prove more practicable on average. The objective value of a solution as per
the rank-reduced problem is a lower bound for its value in the true problem
(since we are ignoring positive terms in the spectral expansion of the
covariance matrix), but beyond this simple statement, an accurate
estimation of how close this lower bound actually is can be nontrivial.

More importantly, a portfolio manager would prefer a rank-reduced
formulation because the modes being ignored are quite small and hence may
\emph{seem} negligible. However, it is important to note that this reasoning does
not amount to a mathematically correct statement. Indeed, note that the
best lower bound after one hour in Table~\ref{table:gurobitest13} is
notably larger than the optimal value of the approximated problem in
Table~\ref{table:gurobitest13_pca_250}.

An additional and important aspect of this discussion that we are not
addressing is the practical impact on portfolio management that a
reduced-rank representation will have. A portfolio manager will not simply
be interested in solution speed---rather, the performance of the resulting
portfolio is of great interest. This point is significant in the sense that
the covariance matrix $Q$, and, of course, the spectral decomposition
$Q = V \Omega V^\top$, are data-driven. Both objects are bound to be very
``noisy'', for lack of a better term. It can be observed that (in
particular) the leading modes of $Q$ (i.e., the columns of the matrix
$V^H$) can be quite noisy. A closely related issue concerns the number of
modes $H$ to rely on. The proper way to handle such noise is by applying
some form of robust optimization, see \cite{GI03}, but an in-depth analysis
of these topics is outside the scope of the present work.

\subsection{Cardinality Regularization Problems}\label{sec:exactopt:cardreg}
There appears to be hardly any literature focusing specifically on the
\emph{exact} solution of regularized cardinality mini\-mization problems,
\cardreg{$\rho$}{$X$}. As mentioned earlier, \cite{FMPSW18} contains
theoretical optimality conditions (but no exact algorithm) for
\mbox{\cardreg{$\tfrac{1}{\gamma}f(x)$}{$g(x)=0,h(x)\leq 0$}} with
$\gamma>0$ and continuously differentiable functions $f:\R^n\to\R$,
$g:\R^n\to\R^p$, and $h:\R^n\to\R^q$. Similarly, \cite{LZ13} considers such
problems allowing for additional constraints that form a closed convex set
$X$. They also show that for separable~$f$ and constraints representable as
$X_1\times\cdots\times X_n$ with $0\in X_i$ for all~$i$, the
$\ell_0$-regularized problem admits a closed-form solution. A statistical
discussion of (solution properties of) least-squares regression with
cardinality regularization,
\cardreg{$\tfrac{1}{2\lambda}\norm{Ax-b}_2^2$}{$\R^n$}, as well as other
concave regularizers, can be found in \cite{ZZ12} (albeit without
algorithmic results) and suggests that from a statistical perspective,
cardinality-constrained least-squares regression is preferable to its
cardinality-regularized variants.

In the cosparsity model, the problem of one-dimensional ``jump-penalized'' least-squares segmentation,
\[
\min\tfrac{1}{2}\norm{b-x}_2^2 + \lambda\norm{Bx}_0,
\]
where~$B$ is the difference operation (so that $\norm{Bx}_0=\sum_{i=2}^{n}\chi_{\{x_i\neq x_{i-1}\}}$) and $\lambda>0$, can be solved in polynomial time, see \cite{J13,BKLMW09} and references therein. Similarly, if~$B$ encodes more general adjacency relations between entries of~$x$,
\[
\min\big\{\tfrac{1}{2}\norm{b-x}_2^2 + \lambda\norm{Bx}_2^2 +\mu\norm{x}_0\,:\, x\geq 0\big\}
\]
(with $\lambda$, $\mu>0$) admits a polynomial-time solution, while for the variant with a cardinality constraint $\norm{x}_0\leq k$ instead of the second regularization term, no such methods are known and MIQP techniques can be applied, see~\cite{AGH21} and the previous works detailed therein.

Generally, the techniques from Section~\ref{sec:exactopt:cardmin} are
applicable to regularized cardinality problems as well: If, for instance,
auxiliary binary variables $y\in\{0,1\}^n$ are used to reformulate
$\norm{x}_0$ as $\ones^\top y$ (coupling $x$ and $y$ via, e.g., big-M
constraints), one can simply integrate the regularization term into the new
objective $\ones^\top y+\rho(x)$.  Depending on the concrete choice of the
function $\rho$, mixed-integer linear or nonlinear programming can then be
applied analogously.

Similarly, some techniques from Section~\ref{sec:exactopt:cardcons}
\emph{might} also be applicable in the regularization context after
reformulating \cardreg{$\rho$}{$X$} as
\begin{equation*}
    \min_{t,x}\left\{t\,:\,\norm{x}_0+\rho(x)\leq t,~x\in X,~t \geq 0\right\}.
\end{equation*}
However, $\norm{x}_0 \leq t-\rho(x)$ is obviously not a classical
cardinality constraint, as the right-hand side also depends on $x$ and
$t \geq 0$ is a variable. We are not aware of any work investigating this
type of mixed constraint.

Finally, \cite{DCL15} considers \cardreg{$\rho$}{$X$} and derives an
exponential-size convexification through disjunctive programming. Based on
this convexification, the authors propose a class of penalty functions
called ``perspective penalties'' that are the counterpart of the
perspective relaxation well-known in the mixed-integer nonlinear context
\cite{GLind08}. Computational experiments comparing various lower bounds
(including those from \cite{PWEG15}) are discussed. A similar convexification
attempt is considered in~\cite{WGK20}, but this time applied to the setting in
which the cardinality is explicitly modeled via
$x_i(1 - y_i) =0~\forall i \in [n]$ with $y$ binary. The convexification is
then obtained by exploiting the interplay between non-separable convex
objectives and combinatorial constraints on the indicator variables; some
computational results on real-world datasets are reported as well.

\section{Relaxations and Heuristics}\label{sec:inexactopt}
Most of the exact solution methods mentioned in the last section make use
of problem-specific heuristics and/or efficient ways to solve the
encountered relaxations. Indeed, incorporating such components into
dedicated MIP and MINLP algorithms (along with other aspects like
propagation and branching rules or cutting planes) can drastically improve
performance compared to black-box approaches with general-purpose solvers,
see, e.g., \cite{BPVP17,BB19,T19}, or the example for \cardmin{$Ax=b$} in
Section~\ref{sec:card_min_example}.

Additionally, heuristic methods are of interest in their own rights, as
they are often (at least empirically) able to provide good-quality
solutions in fractions of the sometimes considerable runtime an exact mixed-integer
programming approach may take, and therefore also open the possibility---or
sometimes the only reasonable way---to tackle very high-dimensional,
large-scale instances (see also Section~\ref{sec:scalability}).

Thus, in this section, we attempt to survey the countless heuristics,
relaxation and approximation methods proposed for the various cardinality
problems discussed in Section~\ref{sec:concreteprobs}. We begin with the well-known $\ell_1$-norm surrogate for the cardinality in Section~\ref{sec:inexactopt:surrogates}, devoting subsections to the reasons for its success and the many different (classes of) algorithms that have been proposed for various $\ell_1$-problems. Due to the sheer number of results, variations, and
improvements, in Section~\ref{sec:inexactopt:surrogates:theory} we limit ourselves to some key results that exhibit the general flavor of so-called ``recovery guarantees'' and introduce some
of the most important concepts. Then, in Section~\ref{sec:inexactopt:surrogates:algorithms}, we give an extensive (though likely still not exhaustive) overview of algorithmic approaches to
solve $\ell_1$-minimization problems; earlier, but less comprehensive,
overviews can also be found in, e.g., \cite{FR13,BJMO12,LPT15}. 
Moving beyond $\ell_1$, we subsequently discuss the more general concept of atomic norms (Section~\ref{sec:inexactopt:surrogates:atomic}) and further approximations of cardinality objectives (Section~\ref{sec:inexactopt:approximation_objective}) and constraints (Section~\ref{sec:inexactopt:relaxation_constraints}). Finally, Section~\ref{sec:inexactopt:greedy_heuristics} survey greedy-like and miscellaneous other heuristics.
We remark that readers who are already very familiar with $\ell_1$-norm theory and algorithms might want to skip Section~\ref{sec:inexactopt:surrogates} and may find the results/tools of the later sections, some of which are fairly new and/or perhaps less known, more useful.

Note that the polyhedral results mentioned in the previous section, i.e.,
valid inequalities for various kinds of cardinality problems, can be viewed
as a means to strengthen the respective LP (or other) relaxations, and
could quite possibly be combined with many heuristic- and/or
relaxation-based approaches. For brevity, we do not repeat the pointers to
the literature in this context. Such integration possibilities appear to
have been largely overlooked thus far, and might offer an interesting
avenue for future refinements of existing inexact models and algorithms.

\subsection{$\ell_1$-Norm Surrogates: Basis Pursuit, LASSO, etc.}\label{sec:inexactopt:surrogates}

The most popular relaxation technique replaces the so-called $\ell_0$-norm
by the ``closest'' convex real norm---the $\ell_1$-norm.  Indeed, it is
easily seen that
\[
  \lim_{p\searrow 0} \norm{x}_p^p = \lim_{p\searrow 0} \sum_{i=1}^{n} \abs{x}^p = \norm{x}_0.
\]
This sentiment along with empirical observations led to the wide-spread use
of the $\ell_1$-norm as a tractable surrogate to promote sparsity, and has
since been underpinned with various theoretical results on when such
approaches work correctly, see, e.g.,~\cite{FR13} for an overview of
breakthrough results in the field of compressed sensing.

In sparse regression, compressed sensing, and statistical estimation, the
following incarnations of such $\ell_1$-based problems are encountered most
often:
\begin{align}
  \min\quad &\norm{x}_1 \st Ax=b,~x\in X; \tag{\mbox{BP($X$)}}\\
  \min\quad &\norm{x}_1 \st \norm{Ax-b}_2\leq\delta,~x\in X; \tag{\mbox{BPDN($\delta,X$)}}\\
  \min\quad &\tfrac{1}{2}\norm{Ax-b}_2^2 \st \norm{x}_1\leq\tau,~x\in X; \tag{\mbox{LASSO($\tau,X$)}}\\
  \min\quad &\norm{x}_1 + \tfrac{1}{2\lambda}\norm{Ax-b}_{2}^{2} \st x\in X. \tag{\mbox{$\ell_1$-LS($\lambda,X$)}}
\end{align}
The \emph{basis pursuit} problem BP($X$) was first discussed and proven to
provide sparse solutions for underdetermined linear equations
in~\cite{CDS98}. Usually, $X=\R^n$ here, but the nonnegative ($X=\R^n_+$),
bounded ($\ell\leq x\leq u$), complex ($X=\C^n$), or integral
($X\subseteq\Z^n$) settings have also been investigated, see, e.g.,
\cite{DT05,FR13,LPST16,KKLP17}. The \emph{basis pursuit denoising} problem
BPDN($\delta,X$) extends the noise-free model BP($X$) by allowing
deviations from exact equality and thus providing robustness against
measurement noise as well as the possibility to achieve even sparser
solutions. As for the original cardinality minimization problem, other
norms than the $\ell_2$-norm have been considered for the constraints,
e.g., the $\ell_\infty$-norm in~\cite{BLT18} or the $\ell_1$-norm
in~\cite{JR15}. The \emph{least absolute shrinkage and selection operator}
LASSO($\tau,X$) was motivated in a regression context as a way to improve
prediction accuracy and interpretability by promoting shrinkage (and thus,
ultimately, sparsity) of the predictor variables~\cite{T96}; it can be seen
as the $\ell_1$-approximation to the cardinality-constrained least-squares
problem. Finally, the \emph{$\ell_1$-regularized least-squares} problem
$\ell_1$-LS($\lambda,X$) is often employed as well, especially if no
immediate bounds $\delta$ or $\tau$ for the related BPDN or LASSO problems
are known, and because it is an unconstrained problem (provided $X=\R^n$)
and thus potentially can be solved even more efficiently. In fact, in contrast to the associated $\ell_0$-based problems (cf.~Prop.~\ref{prop:cardprobs_relations}), it is known that BPDN($\delta,\R^n$), LASSO($\tau,\R^n$) and
$\ell_1$-LS($\lambda,\R^n$) are always \emph{equivalent} for certain values of the
parameters $\delta$, $\tau$ and $\lambda$ (see, e.g., \cite{VDBF08}),
although the precise values for which this holds are data-dependent and
generally unknown a priori. The recent work~\cite{BPY21} analyzes the stability of these programs w.r.t.\ parameter choices in a denoising setting with $A=I$, and indicates that the regularized version behaves most robustly. In all these problems, typically $X=\R^n$,
though like for BP($X$), other constraints are occasionally considered as
well.

Two more $\ell_1$-minimization problem variants that have turned out to be
of special interest in some applications are the so-called \emph{Dantzig
  Selector}~\cite{CT07}
\begin{equation}
  \min\quad \norm{x}_1 \st \norm{A^\top(Ax-b)}_\infty\leq\varepsilon,~x\in X, \tag{\mbox{DS($\varepsilon,X$)}}
\end{equation}
whose cardinality-minimization counterpart was proposed in~\cite{MR17}, and
the Tikhonov/ ridge-regularized $\ell_1$-LS problem
\begin{equation}
  \min\quad \norm{x}_1 + \tfrac{1}{2\lambda_1}\norm{Ax-b}_{2}^{2} +
  \tfrac{\lambda_2}{\lambda_1}\norm{x}_2^2 \st x\in X, \tag{\mbox{EN($\lambda_1,\lambda_2,X$)}}
\end{equation}
known as the \emph{elastic net}~\cite{ZH05}. The additional ridge penalty
here ensures strong convexity of the objective function and, consequently,
uniqueness of the minimizer. Like $\ell_1$-LS($\lambda,X$),
EN($\lambda_1,\lambda_2,X$) has been used in several applications such as
portfolio optimization~\cite{BPVP17} or support vector machine
learning~\cite{WZZ06}.

In the following, we first provide a very brief overview of
the theoretical success guarantees that led to the popularity of
$\ell_1$-formulations, and then discuss algorithms.

\subsubsection{Dipping a Toe Into Why $\ell_1$-Reformulations Became Popular}\label{sec:inexactopt:surrogates:theory}

Nowadays, using the $\ell_1$-norm as a tractable surrogate for the
cardinality is commonplace. This rise to popularity was in large part
fueled by the advent of compressed sensing, the signal processing paradigm
that reduces measurement acquisition efforts at the cost of more complex
signal reconstruction. Low cardinality of signal vectors (i.e., sparsity)
has proven to be key for solving the nontrivial recovery problems, and
\cite{CDS98} laid essential groundwork in demonstrating that the
$\ell_1$-surrogate offers a viable and efficient alternative to the ``true
sparsity'' represented by the $\ell_0$-norm and associated \NP-hard
reconstruction tasks. Most of the earliest sparse recovery research focused
on the problems BP($\R^n$) and BPDN($\delta,\R^n$), so for the sake of
exposition, we highlight them and their $\ell_0$ counterparts
\cardmin{$Ax=b$} and \cardmin{$\norm{Ax-b}_2\leq\delta$} here, too.  The
interesting setup in compressed sensing has $A\in\R^{m\times n}$ with
$\rank(A)=m<n$, so the system $Ax=b$ is underdetermined and has infinitely
many solutions. Sparsity is key to overcome this ill-posedness by allowing,
in principle, the exact reconstruction of sufficiently sparse signals as
the respective \emph{unique} sparsest solutions to $Ax=b$, i.e., unique
optimal solutions of \cardmin{$Ax=b$}. As mentioned earlier
(cf.\ Section~\ref{sec:concreteprobs:signalprocessing}), this uniqueness
requires that the signal cardinality is smaller than $\spark(A)/2$,
regardless of any algorithm being used to solve the actual reconstruction
problem, see, e.g., \cite{DE03}.

Although proven to be \NP-hard only much later in~\cite{TP14}, computing
the spark was deemed intractable early on, and since cardinality
minimization problems like \cardmin{$\norm{Ax-b}_2\leq\delta$} were already
known to be \NP-hard as well (cf.\ \cite{GJ79,N95}), the focus quickly
shifted to alternative conditions that ensure sparse solution uniqueness
and/or reconstruction error analysis of surrogate methods, in particular
$\ell_1$-minimization.

The best-known such \emph{recovery conditions} can all be formulated using
a few key matrix parameters, namely:
\begin{itemize}
\item The \emph{mutual coherence} of a matrix $A$,
  \[
    \mu(A)\define \max_{\begin{subarray}{c}i,j\in[n],\\i\neq j\end{subarray}} \frac{\abs{A_i^\top A_j}}{\norm{A_i}_2\,\norm{A_j}_2}.
  \]
\item The order-$k$ \emph{nullspace constant} ($k$-NSC) of a matrix $A$,
  \[
    \alpha_k\define
    \max_{x,S}\left\{\,\norm{x_S}_1\,:\,Ax=0,~\norm{x}_1=1,~S\subseteq[n],~\abs{S}\leq k\,\right\}.
  \]
\item The order-$k$ \emph{restricted isometry constant} ($k$-RIC) of a matrix $A$,
  \[
    \delta_k\define\min_x\left\{\,\delta\,:\,(1-\delta)\norm{x}_2^2\leq\norm{Ax}_2^2\leq(1+\delta)\norm{x}_2^2~\forall
      x\text{ with }1\leq\norm{x}_0\leq k\,\right\}.
  \]
\end{itemize}
For a $k$-sparse solution $\hat{x}$ of $Ax=b$, these parameters can all be
used to certify uniqueness of $\hat{x}$ as the sparsest solution, for
instance via $2k<1+1/\mu(A)^2$ \cite{LZP17}, $\alpha_k<1/2$ \cite{DH01}, or
$\delta_{2k}<1$ \cite{CT05,C08}---indeed, these conditions each imply
$k<\spark(A)/2$. Yet more interestingly, these parameters also yield
recovery conditions for $\ell_1$-minimization and other (algorithmic)
approaches, i.e., they can be used to ensure correctness of the solutions
to surrogate problems (possibly up to certain error bounds) w.r.t.\ the
original sparsity target. In particular, \emph{uniform sparse recovery
  conditions (SRCs)} such as incoherence (small enough $\mu(A)$), the
nullspace property (NSP) or the restricted isometry property (RIP) ensure
$\ell_0$-$\ell_1$-equivalence for all $k$-sparse vectors, i.e., that the
solution to BP($\R^n$) is unique and coincides with the unique solution of
\cardmin{$Ax=b$}. Besides uniform SRCs, there are also various
\emph{individual SRCs} that establish uniqueness of, e.g.,
BP($\R^n$)-solutions for specific $\hat{x}$. The most powerful one in this
regard is sometimes called \emph{strong source condition}~\cite{GHS11}:
$\hat{x}$ is the unique optimal solution to BP($\R^n$) if and only if
$\rank(A_{\supp(\hat{x})})=\norm{\hat{x}}_0$ and there exists a vector~$w$
with $A^\top w\in\partial\norm{\hat{x}}_1$ such that $\abs{(A^\top w)_j}<1$
for all $j\notin\supp(\hat{x})$. (This condition can be derived via
first-order optimality and complementary slackness.) Similarly to uniform
SRCs, the strong source condition has its analogues for other
$\ell_1$-problems like BPDN($\delta,\R^n$), $\ell_1$-LS($\lambda,\R^n$),
LASSO($\tau,\R^n$), and their ``analysis/cosparse counterparts'' (with
$\norm{Bx}_1$ instead of $\norm{x}_1$ for some matrix $B$), see,
e.g.,~\cite{ZYY16}.

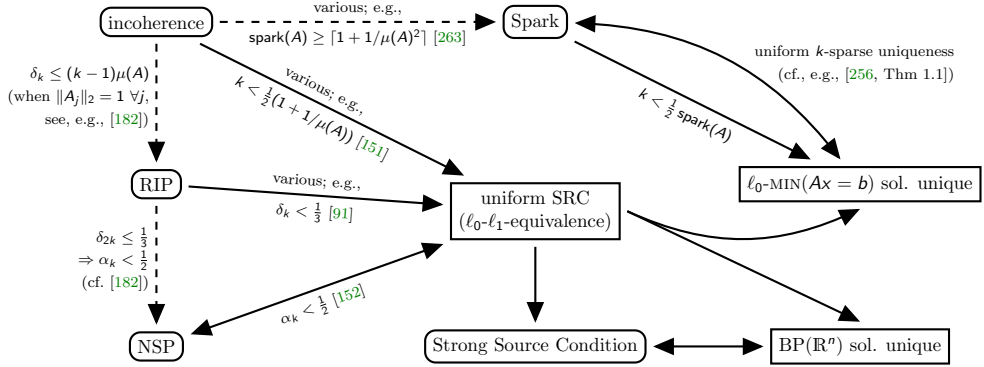
\begin{figure}[t]
  \centering
  \vspace*{2em}
    \begin{tikzpicture}[scale=0.7196, transform shape, >=triangle 45]
      \tikzstyle{biarc}=[<->,draw=black!100,thick]
      \tikzstyle{textnode}=[draw=black!100,thick,inner sep=4pt, outer sep=4pt, minimum width = 15pt]
      \tikzstyle{arc}=[->,draw=black!100,thick]
      \node[textnode, rounded corners] (nsp) at (0,3) {NSP};
      \node[textnode, rounded corners] (rip) at (0,6) {RIP};
      \node[textnode, rounded corners] (inc) at (0,9) {incoherence};
      \node[textnode, rounded corners] (s) at (7,9) {Spark};
      \node[textnode, rounded corners] (ssc) at (7,3) {Strong Source Condition};
      \node[textnode] (p0) at (13,6) {\cardmin{$Ax=b$} sol. unique};
      \node[textnode] (p1) at (13,3) {BP($\R^n$) sol. unique};
      \node[textnode, align=center] (uni) at (7, 5.5) {uniform SRC\\($\ell_0$-$\ell_1$-equivalence)};
      \draw[biarc](nsp) --node[below, sloped, pos=0.5] {\footnotesize{$\alpha_k<\tfrac{1}{2}$ \cite{DH01}}}  (uni); 
      \draw[arc](rip) --node[above, sloped, pos=0.5]{\footnotesize{various;
        e.g.,}} node[below, sloped, pos=0.5]{\footnotesize{$\delta_k<\tfrac{1}{3}$ \cite{CZ13}}} (uni);
      \draw[arc,dashed,thick](rip) to node[left, align=right, pos=0.45]{\footnotesize{$\delta_{2k}\leq\tfrac{1}{3}$}\\ \footnotesize{$\Rightarrow\alpha_k<\tfrac{1}{2}$}\\\footnotesize{(cf.~\cite{F10})}} 
      (nsp);
      \draw[arc](inc) --node[above, sloped, pos=0.45]{\footnotesize{various; e.g.,}} node[below, sloped, pos=0.45]{\footnotesize{$k < \tfrac{1}{2}(1+1/\mu(A))$ \cite{DE03}}} (uni);
      \draw[arc,dashed,thick](inc) --node[left, align=right,pos=0.45]{\footnotesize{$\delta_k\leq (k-1)\mu(A)$}\\\footnotesize{(when $\norm{A_j}_2=1~\forall j$,}\\\footnotesize{see, e.g.,~\cite{F10})}} (rip);
       \draw[arc,dashed,thick](inc) --node[above]{\footnotesize{various;
           e.g., }} node[below]{\footnotesize{$\spark(A)\geq \lceil 1 + 1/\mu(A)^2\rceil$ \cite{LZP17}}} (s);
       \draw[arc](s) --node[below, sloped]{\footnotesize{$k < \tfrac{1}{2}\spark(A)$}} (p0);
       \draw[biarc](s) to [bend left=25] node[above, align=right, pos=0.7]
       {\footnotesize{$\qquad\qquad\qquad\qquad\qquad$uniform $k$-sparse
           uniqueness}\\ \footnotesize{(cf., e.g., \cite[Thm~1.1]{EK12})}} (p0);
       \draw[biarc](ssc) -- (p1);
       \draw[arc](uni) -- (ssc);
       \draw[arc,thick]([yshift=0.0cm]uni.east) to [bend right=25] node[below, sloped]{} (p0.south);
       \draw[arc,thick]([yshift=-0.0cm]uni.east) -- node[above, sloped]{} (p1.north);
    \end{tikzpicture}
    \caption{\footnotesize\itshape Illustration of relations between matrix
      properties and associated SRCs, and their implications w.r.t.\
      recovery of $k$-sparse solutions of \cardmin{$Ax=b$} and
      BP($\R^n$). (Adapted from~\cite{T13}.)}
  \label{fig:recoveryconditionsBP}
\end{figure}

The strongest SRCs are known for basis pursuit, i.e., BP($\R^n$); we
illustrate some such conditions and their relationships in
Figure~\ref{fig:recoveryconditionsBP}. This figure was adapted
from~\cite{T13}, where many more details about recovery conditions are
described, with a focus on BP($\R^n$) and computational
complexity\footnote{Computing the $k$-NSC or $k$-RIC of a matrix is
  \NP-hard, see~\cite{TP14}, whereas the mutual coherence can obviously be
  computed efficiently.}.

Regarding other $\ell_1$-problems, in particular BPDN($\delta,\R^n$) or
$\ell_1$-LS($\lambda,\R^n$), we refer to \cite{FR13} and the concise
summary, derivations, and references therein; the following is an example
of the noise-aware recovery conditions one can find in this context. Let
$\sigma_k(x)_p=\inf\{\norm{x-z}_p : \norm{z}_0\leq k\}$ be the
$\ell_p$-norm error of the best $k$-term approximation of a vector
$x\in\C^n$; this comes into play in situations where $x$ is not exactly
sparse (see also~\cite{CDDV09}). Let a matrix $A\in\C^{m\times n}$ satisfy
\[
  \norm{y_S}_2\leq \frac{\rho}{k^{1/2}}\norm{y_{S^c}}_1+\tau\norm{Ay}_2\qquad\forall y\in\C^n~\forall S\subseteq[n]:\abs{S}\leq k,
\]
where $\rho\in(0,1)$ and $\tau>0$ are constants; this is called the
$\ell_2$-robust nullspace property of order~$k$. Then (see
\cite[Thm.~4.22]{FR13}), for any $\hat{x}\in\C^n$, a solution $x^*$ of
BPDN($\delta$,$\C^n$) with $b=A\hat{x}+e$ and $\norm{e}_2\leq\delta$
recovers the vector~$\hat{x}$ with an $\ell_p$-error ($1\leq p\leq 2$) of
at most
\[
  \norm{x^*-\hat{x}}_p \leq
  \frac{\alpha}{k^{1-1/p}}\sigma_k(\hat{x})_1+\beta k^{1/2-1/p}\delta,
\]
for some constants $\alpha,\beta>0$ that depend solely on $\rho$ and
$\tau$.

Another typical kind of question investigated in compressed sensing
pertains to the number of measurements, i.e., the number of rows of $A$,
that are needed to ensure recovery of $k$-sparse vectors by
$\ell_1$-approaches (and others). The arguments typically use the same
matrix parameters as before, with an apparent focus on restricted isometry
properties of random matrices. Briefly, it can be shown for Gaussian (and
other) random matrices that $\ell_0$-$\ell_1$-equivalence for $k$-sparse
vectors holds with high probabi\-li\-ty if $m\geq C(\delta_k) k\log(n/k)$,
where $C(\delta_k)$ is a constant depending only on the order-$k$ RIC; see
\cite[Chapter~9]{FR13}. Very many similar results establish that for sufficiently many random measurements of some kind, one of the (deterministic) sparse recovery conditions holds with high probability. A slightly different approach is taken in \cite{dAEELKa13}, where it is shown how to relax the standard NSP into one that holds with high probability under certain distributional assumptions on the nullspace (rather than the sensing matrix itself), and relate the task of verifying this condition to classical combinatorial optimization problems.

These types of results laid the foundation for the success of
$\ell_1$-approximations to sparsity, or cardinality terms, and gave rise to
a huge amount of research, both on theoretical improvements and efficient
algorithms for various problem variants. In order to not dilute the focus
of the present paper too much, we do not delve further into the theory
outlined above, and refer to \cite{FR13,EK12,E10,D06} as good starting
points for anyone wishing to dig deeper. We will, nonetheless, complement
the present primer with an overview of the various algorithms for
$\ell_1$-norm optimization problems in the following subsection.

\subsubsection{Algorithmic Approaches to $\ell_1$-Problems}\label{sec:inexactopt:surrogates:algorithms}

A plethora of different solution methods have been applied and specialized
to efficiently handle one or more of the above problems or slight
variations. It is noteworthy that most methods are first-order methods that
often do not need the matrix $A$ to be given explicitly and can instead
work with fast operators implementing matrix-vector products with $A$
and/or $A^\top$. This enables application of such algorithms in large-scale
regimes and special settings where $A$ corresponds to, e.g., a fast Fourier
transform. Moreover, the algorithms can often handle complex data and
variables as well; for simplicity, we again focus only on the real-valued
setting. 

For clarity, we group the different approaches according to the broader
categories they fall into:

\paragraph{\bf Reformulation as LPs or SOCPs}

It is well known that the absolute value function, and thus, by extension,
the $\ell_1$-norm, can be linearized. Hence, any problem involving only
linear and $\ell_1$-norm terms in the objective and the constraints can be
written as a \emph{linear program (LP)}. Similarly, convex $\ell_2$-norm
terms (as in, e.g., BDPN($\delta,\R^n$)) can be reformulated using
second-order cone techniques, yielding \emph{second-order cone programs
  (SOCPs)}. For both these classes, there are well-known standard solution
methods like simplex method variants (for LPs), active-set or interior
point algorithms (for both), see, e.g., \cite{V01,BV04}, with highly sophisticated genera-purpose implementations (e.g., \cite{cplex,gurobi,W96,scip}).

Namely, $\ell_1$-\textsc{magic} (see \cite{CR05}) employs the generic
primal-dual interior point solver from~\cite{BV04} to solve the LP
reformulation
\begin{equation*}
  \min~\ones^\top u \st Ax=b,~-u\leq x\leq u
\end{equation*}
of BP($\R^n$). This approach has also been applied to some related problems
that can be written as LPs, such as DS($\varepsilon,\R^n$), and,
analogously (using another general-purpose log-barrier algorithm
from~\cite{BV04}), to problems such as BPDN($\delta,\R^n$) or
total-variation minimization that can be recast as SOCPs.
Similarly, SolveBP, described in~\cite{CDS98,CDS01}, solves (a
perturbed version of) the alternative LP reformulation of BP($\R^n$) with
variable splits, i.e.,
\begin{equation}\label{eq:l1varsplit}
  \min~\ones^\top x^+ +\ones^\top x^- \st Ax^+ -Ax^-=b,~x^\pm\geq 0,
\end{equation}
by employing the primal-dual log-barrier solver PDCO (based on~\cite{GMPS91}), which can also handle other problems via suitable LP- or SOCP-reformulations, e.g.,
BPDN($\delta,\R^n$).

For $\ell_1$-minimization problems with linear constraints $Ax=b$ or
\mbox{$\norm{Ax-b}_p\leq\delta$} with $p\in\{1,\infty\}$ (as well as some
related problems such as the ``least absolute deviation (LAD)-Lasso''
$\min\{\norm{Ax-b}_1\suchthat\norm{x}_1\leq\tau\}$, cf.~\cite{WLJ07}), it
has been proposed in~\cite{PZVL17} to employ the parametric simplex
method~\cite{D63,V01}. (The earlier version~\cite{PZVL15} of~\cite{PZVL17}
contains more details and applications.) Note that any implementation will
face the typical challenges of a simplex solver---efficient basis updates,
cycling avoidance, etc.---and is therefore nontrivial. For similarities
with and differences to the related homotopy methods discussed below, see
the discussion in~\cite{BLT18}.

The recent contribution~\cite{MWZ19} demonstrates that $\ell_1$-problems,
recast as LPs---in particular, BP($\R^n$) and DS($\tau,\R^n$)---can be
solved very efficiently by using column generation (cf.\ \cite{DDS05}) and
dynamic constraint generation (i.e., cutting planes). The suggested method
initializes the variable and constraint index sets to be included in the
first master problem based on the (efficiently obtainable) solution of the
homotopy method for $\ell_1$-LS($\lambda,\R^n$). Afterwards, violated but
not yet included constraints are identified and added to the model and new
variables are added by solving a classical LP-based pricing problem.
Iterating over the resulting sequence of %(increasing, but significantly
%smaller than the full original problem) 
smaller subproblems is demonstrated to
yield the optimum for the original problem at hand much faster than
directly solving it as an LP or with an alternating direction scheme (see
below). The same idea, i.e., column (and constraint) generation based on
LP reformulations, was also proposed recently for $\ell_1$-regularized
training of SVMs, see~\cite{DMW19}.

Finally, \verb#l1_ls#, described in~\cite{KKLBG07}, is an interior-point
(primal log-barrier) solver for problems of the form
$\ell_1$-LS($\lambda,\R^n$) or $\ell_1$-LS($\lambda,\R^n_+$); the algorithm
employs a truncated Newton subroutine to obtain approximate search
directions.

\paragraph{\bf Homotopy Methods}\label{par:homotopyMethods}

Homotopy methods for $\ell_1$-minimization problems have been described in,
e.g.,~\cite{OPT00,MCW05,AR09,DT08}. The basic idea is to directly and
efficiently identify breakpoints of the piecewise-linear solution path of
$\ell_1$-LS($\lambda,\R^n$), following changes in $\lambda$ from
$\lambda\geq\norm{A^\top b}_\infty$ (for which the optimum is
$x^*_\lambda=0$) in a sequence decreasing to $0$, reaching an optimal
solution of BP($\R^n$). Stopping as soon as $\lambda$ drops below $\delta$
yields an optimal solution for BPDN($\delta,\R^n$). The $\ell_1$-homotopy
framework can also be applied, with small modifications, to solve
DS($\varepsilon,\R^n$), LASSO($\tau$,$\R^n$) and several other related problems,
cf.\ \cite{A08,AR09,A13,T96,EHJT04,OPT00}.

For BPDN with $\ell_\infty$-constraints, i.e., for
$\min\{\norm{x}_1\suchthat\norm{Ax-b}_\infty\leq\delta\}$ (and thus, since
$\delta=0$ is possible, also for BP($\R^n$)), a related homotopy method
called $\ell_1$-\textsc{Houdini} was developed in~\cite{BLT18}.  In fact,
$\ell_1$-\textsc{Houdini} can be extended to treat the more general problem
class
$\min\{\norm{x}_1\suchthat \ell\leq Ax-b\leq u,~Dx=d\}$~\cite{BLT18,B18},
which includes the Dantzig selector problem DS($\varepsilon,\R^n$) as a
special case. The algorithm works in a primal-dual fashion\footnote{Note
  that, in principle, the $\ell_2$-norm-based homotopy methods described
  earlier are also of a primal-dual nature; however, there, solutions to
  the respective dual subproblems admit a closed-form solution that can be
  integrated into the primal formulas~directly.}, solving auxiliary LPs
efficiently with a dedicated active-set algorithm.

Yet another homotopy method, the DASSO algorithm, is introduced
in~\cite{JRL09} for DS($\varepsilon,\R^n$). Similarly to
$\ell_1$-\textsc{Houdini}, it solves auxiliary LPs in every iteration. The
paper also provides conditions under which the homotopy solution paths for
DS($\varepsilon,\R^n$) and $\ell_1$-LS($\lambda,\R^n$) or
BPDN($\delta,\R^n$) coincide.
    
For sufficiently sparse solutions, all these homotopy algorithms are highly
efficient, beating even commercial LP solvers (cf.~\cite{LPT15,BLT18}), and
can also be used for cross-validation purposes when a suitable
measurement-error bound or regularization parameter is yet unknown, since
they provide solutions for the whole homotopy path (i.e., all values of
$\lambda$, possibly translated to $\delta$ for BPDN-constraints, that
induce a change in the optimal solution support). Efficiency in the form of
the so-called $k$-step solution property---i.e., recovering $k$-sparse
solutions in~$k$ iterations---is discussed, e.g., in~\cite{DT08}. However,
similarly to the simplex method for LPs, these homotopy methods can
generally take an exponential number of iterations in the worst
case~\cite{MY12,B18}.

Finally, the famous LARS algorithm (\emph{least angle regression},
see~\cite{EHJT04}) is a heuristic variant of the $\ell_1$-LS homotopy
method that also computes the true optimum for sufficiently sparse
solutions (via the $k$-step solution property mentioned above). However,
LARS is generally not an exact solver, because it allows only for increases
in the current support set, whereas full homotopy schemes also allow for
the (possibly necessary) removal of indices that entered the support at
some previous iteration. 

\paragraph{\bf Iterative Shrinkage/Thresholding and Other Gradient Descent-Like
  Algorithms}\label{par:ISTA}
  
A large number of proposed methods belong to the broad class of
\emph{iterative shrinkage/thresholding algorithms} (ISTA). Such methods
have mostly been derived for $\ell_1$-LS($\lambda,\R^n$) or closely related
problems, from different viewpoints and under different names, such as ISTA
and its accelerated cousin FISTA~\cite{BT09}, thresholded Landweber
iterations~\cite{DDDM04}, iterative soft-thresholding~\cite{BL08},
fixed-point iterations~\cite{HYZ08,WYGZ10}, or (proximal) forward-backward
(or monotone operator) splitting~\cite{CW05,CombP11,GSB14,R76}; see also
\cite{FN03,FN05}. Variants and extensions are numerous and sometimes known
by yet other names (e.g., Douglas-Rachford splitting or the Arrow-Hurwicz
method, both of which are special cases of the Chambolle-Pock
algorithm~\cite{CP11}).

In essence, such methods perform a gradient-descent-like step followed by
the application of a proximity operator. For instance, for
$\ell_1$-LS($\lambda,\R^n$), the basic ISTA iteration updates
\[
  x^{k+1} = \mathcal{S}_{\lambda\gamma^k}\left( x^k - \gamma^k A^\top\left(Ax^k -b\right)\right)
\]
with stepsizes $\gamma^k$, where $\mathcal{S}_\alpha$ is the
\emph{soft-thresholding operator}, defined component-wise as
\[
  \mathcal{S}_{\alpha}(x)_i \define \sign(x_i)\max\left\{\abs{x_i}-\alpha,\,0\right\}.
\]
This very general scheme that can be applied to many more problems than
just $\ell_1$-LS($\lambda,\R^n$). The stepsizes are typically chosen as
constants or related to Lipschitz constants of the least-squares term.
Acceleration of iterative shrinkage/thresholding schemes can be achieved by
homotopy-like continuation schemes (e.g., as in~\cite{WYGZ10}),
sophisticated stepsize selection routines (e.g., as in
\cite{BT09,FNW07,WNF09}), or by mitigating the negative influence of~$A$ being ill-conditioned (see~\cite{BDF07}, and also~\cite{GS10}). 
%the two-step modification TwIST~\cite{BDF07}
%aimed at mitigating the negative influence of ill-conditioning of the
%matrix~$A$ on the convergence speed of the iterative soft-thresholding
%scheme. Similarly, the CGIST algorithm from~\cite{GS10} tackles
%$\ell_1$-LS($\lambda,\R^n$) via a forward-backward splitting algorithm and
%proposes acceleration and robustification against ill-conditioned matrices
%by employing a specialized conjugate gradient solver for the (quadratic)
%subproblems. 
%Introducing a gradient-like term with respect to the employed
%thresholding operator (derived from belief propagation in graphical models)
%into the iterate update formulas, \cite{DMM09} proposes a variation of
%iterative thresholding algorithms that mimics a first-order
%\emph{approximate message passing} (AMP) scheme and reportedly improves the
%undersampling-sparsity tradeoff compared to more classical ISTA approaches.
Another variation mimics a first-order \emph{approximate message passing} (AMP) scheme~\cite{DMM09}.

In~\cite{FHT10}, an algorithmic framework called \texttt{glmnet} is
proposed for generalized linear models with convex regularization terms, in
particular including $\ell_1$-LS($\lambda,\R^n$), DS($\varepsilon,\R^n$),
and EN($\lambda_1,\lambda_2,X$). The method combines homotopy-like
parameter continuation with cyclic coordinate descent, making the update
steps extremely efficient and the algorithm one of the fastest for $\ell_1$-regularized least-squares problems (cf.~\cite{FHT10,MWZ19}).
%. computational evidence in~\cite{FHT10,MWZ19}
%suggests that the \texttt{glmnet} algorithm may be the fastest one to solve
%$\ell_1$-regularized least-squares problems. 
Nevertheless, note that it does not yield the full homotopy solution path, but instead imposes a sequence of regularization parameters that are chosen a priori or
adaptively, but not guided by homotopy path breakpoints.

The STELA algorithm~\cite{YP17} solves $\ell_1$-LS($\lambda, \R^n$) by means
of successive \mbox{(pseudo-)} convex approximations, based on a parallel
best-response Jacobi algorithm, and can be
%. The acronym STELA is derived from interpreting the algorithm 
interpreted as an iterative soft-thresholding algorithm with
exact line search. %In particular, STELA is shown to be a competitive method
%for $\ell_1$-problems also in~\cite{KT16}, and 
It has been extended to the
sparse phase retrieval problem and more general nonconvex regularizers, see~\cite{YPEO19,YPCO18}, and is further related to the
majorization-minimization approach and block coordinate descent.

The SpaRSA algorithm~\cite{WNF09} can solve $\ell_1$-LS($\lambda,\R^n$)
and, in fact, much more general problems that minimize the sum of a smooth
function and a nonsmooth, possibly nonconvex regularizer. It is related to
IST algorithms like the above, GPSR (see directly below) and trust-region
methods, but handles subproblems and stepsize selection differently. The
algorithm consists of iteratively solving subproblems that can be viewed as
a quadratic separable approximation of the $\ell_2$-norm term at the
current iterate, using a diagonal Hessian approximation for the
second-order part. %The iterative scheme involves the soft-thresholding
%operator and a Barzilai-Borwein-like stepsize rule combined with an
%acceptance criterion to avoid large oscillations of the (non-monotone)
%objective values. 
The overall scheme can, moreover, also be applied to
\cardreg{$\tfrac{1}{2\lambda}\norm{Ax-b}_2^2$}{$\R^n$}, resulting in the
use of hard- instead of soft-thresholding for the subproblem solutions.

When focusing on constrained problems like BP($\R^n$) or
BPDN($\delta,\R^n$), gradient-descent-like iterations can also be combined
with projections onto the constraint set: GPSR (\emph{gradient projection
  for sparse reconstruction})~\cite{FNW07} is such an algorithm, derived to
solve $\ell_1$-LS($\lambda,\R^n$). It applies a gradient projection scheme
with either Armijo-linesearch/backtracking or Barzilai-Borwein stepsize
selection to a reformulation of $\ell_1$-LS($\lambda,\R^n$) as a QP with
nonnegativity constraints, obtained by a standard variable split as
in~\eqref{eq:l1varsplit}. A variant using continuation is also
discussed. It is worth mentioning that a different projected gradient
scheme is proposed in~\cite{DFL08}, derived as an accelerated extension of
the iterative shrinkage/thresholding principle.

Another example is ISAL1, an infeasible-point subgradient algorithm for
$\ell_1$-minimization problems that uses adaptive approximate projections
onto the constraint set, see~\cite{LPT14}. It can handle a variety of
constraints and, in particular, is able to solve BP($\R^n$), BPDN($\delta,\R^n$),
or unconstrained problems like $\ell_1$-LS($\lambda,\R^n$). More details
are provided in \cite{T13}, including a variable target-value version of
the algorithm.

\hypertarget{SPGL1}{}
Finally, the SPGL1~\cite{VDBF08} algorithm can solve problems BP($\R^n$), BPDN($\delta,\R^n$) and LASSO($\tau,\R^n$) by employing a sequence of LASSO subproblems with suitably chosen $\tau$-parameters that are approximately solved with an efficient specialization of the \emph{spectral projected gradient} method from~\cite{BMR00}. Later, SPGL1 was generalized to objective functions of the gauge-function type and more general constraints, e.g., additionally including nonnegativity, see~\cite{VDBF11}.

\paragraph{\bf Alternating Direction Method of Multipliers (ADMM)}\label{par:ADMM}

This class of algorithms alternates improvement steps with respect to
different variable groups; in particular, auxiliary variables may be
introduced as in the augmented Lagrangian approach to relax constraints
into the objective function. The idea of treating variable groups
separately is typically to obtain comparatively easy subproblems, allowing
for fast iterations that enable applicability also in large-scale regimes
(similarly to block coordinate descent). A generic example for such a
decomposition in a more general context will be provided in
Section~\ref{sec:inexactopt:relaxation_constraints}.

YALL1, described in~\cite{YZ11}, is a framework of specialized alternating
direction methods for several $\ell_1$-problems, including BP($\R^n$),
BP($\R^n_+$), BPDN($\delta,\R^n$), and BPDN($\delta,\R^n_+$) as well as
weighted-$\ell_1$-norm minimization or $\ell_1$-constrained variants.

SALSA~\cite{ABDF10}---short for \emph{(constrained) split augmented
  Lagrangian shrinkage algorithm}---is another ADMM scheme applied to a
classic augmented Lagrangian reformulation of $\ell_1$-LS($\lambda,\R^n$)
that is obtained by introducing auxiliary variables $v=x$ and
Lagrange-relaxing this constraint. The scheme can be extended to BPDN($\delta,\R^n$)~\cite{ABDF11} and more general objectives than the $\ell_1$-norm; 
%SALSA also allows for more general
%regularization functions than the $\ell_1$-norm. Algorithm
%C-SALSA~\cite{ABDF11} extends the approach to BPDN($\delta,\R^n$) (again,
%also for more general objectives than $\ell_1$) by first moving the
%constraint into the objective via the corresponding indicator function, and
%then applying the SALSA reformulation. 
%The algorithm 
it hinges on efficient proximity operators for the regularization term, provided by standard soft-thresholding in the $\ell_1$-case, and requires computation of
inverses for $(A^\top A+\alpha I)$ or $(A A^\top+\alpha I)$, $\alpha>0$.

\paragraph{\bf Smoothing Techniques}\label{par:smoothingTechniques}

The NESTA algorithm~\cite{BBC11} is developed for problem
BPDN($\delta,\R^n$) and works by applying Nesterov's smoothing techniques
(cf.~\cite{N05}) to the $\ell_1$-norm objective function. The general method can also be applied to related problems, e.g., with a weighted-$\ell_1$ objective or the $\ell_\infty$-norm constrained problem (in its Lagrangian/regularized form), and may be combined with a homotopy-like parameter continuation scheme for decreasing $\delta$-values. The algorithm was mainly designed for the case
$A^\top A=I$; it can handle the non-orthogonal setting as well, but then
may require costly subroutines such as computing a full singular
value decomposition of~$A$.

The paper~\cite{GLW13} proposes two related algorithms: NESTA-LASSO is a
specialization of NESTA (i.e., essentially, Nesterov's algorithm) to LASSO($\tau,\R^n$) with a slight modification to establish
additional convergence properties, and \textsc{ParNes} combines SPGL1 (described earlier) with NESTA-LASSO, solving the LASSO subproblems
of the spectral projected gradient (SPG) scheme approximately with the
novel algorithm. Thus, \textsc{ParNes} can solve both BPDN($\delta,\R^n$)
and $\ell_1$-LS($\lambda,\R^n$) in particular.

The TFOCS~\cite{BCG11} framework for solving a variety of $\ell_1$-related
(as well as more general) problems is based on reformulating constraints in
the form of $A(x)+b\in\mathcal{K}$ with a linear operator $A$ and a closed
convex cone $\mathcal{K}\in\R^n$, smoothing the typically nonsmooth
objective function (e.g., the $\ell_1$-norm), and then applying efficient
first-order methods on the dual smoothed problem, along with a way to eventually recover associated approximate primal solutions. The TFOCS
framework can thus be adapted to concrete problems at hand (e.g.,
BP($\R^n$) or LASSO($\tau,\R^n$)) in a template-like fashion, combining
first-order methods like FISTA or standard projected-gradient schemes with
proximity or projection operators and other building blocks.

\paragraph{\bf Bregman Iterative Algorithms}

The paper \cite{YOGD08} (see also \cite{YO13}) proposes \emph{Bregman
  iterative regularization} to solve BP($\R^n$), extending previous
work~\cite{OBGXY05}. The method builds on iteratively solving subproblems
involving the so-called Bregman distance, which is essentially the slack of
a subgradient inequality, and can be traced back to~\cite{B67}. These
subproblems turn out to reduce to problems of the form
$\ell_1$-LS($\lambda,\R^n$) with a different right-hand-side vector $b$ in
each iteration. Thus, any available solver for $\ell_1$-regularized
least-squares problems can be employed to solve the subproblems of the
Bregman iterative scheme. In~\cite{YOGD08}, the authors propose to use the
fixed-point continuation (FPC) algorithm of~\cite{HYZ08}, but note that
today, more efficient methods are known (even compared to the active-set
improvement of FPC, \verb#FPC_AS#, introduced in \cite{WYGZ10}), e.g.,
\texttt{glmnet}~\cite{FHT10,MWZ19}. It is noted in~\cite{YO13} that the
Bregman iterative procedure is equivalent to the augmented Lagrangian
method.

The \emph{Linearized Bregman iteration}~\cite{YOGD08,COS09,OMDY11} also
tackles BP($\R^n$), by solving a Tikhonov-regularized version of the problem, i.e.,
$\min\{\norm{x}_1+\tfrac{1}{2\lambda}\norm{x}_2^2\suchthat Ax=b\}$, which can be shown to yield the same solution as BP($\R^n$) for sufficiently large $\lambda>0$ \cite{FT07,LY13}. The necessary value of $\lambda$ is data-dependent
and generally unknown, but may be estimated for practical purposes as a
small multiple of the maximal absolute-value entry of the (unknown) optimal
solution~\cite{LY13}. The crucial difference to the standard Bregman
iteration is that the quadratic data-fidelity term
$\tfrac{1}{2}\norm{Ax-b}_2^2$ in the $\ell_1$-regularized least-squares
problems is replaced by its (gradient-based) linear approximation
$x^\top A^\top(Ax-b)$, hence \emph{linearized} Bregman. 
%The strongly convex
%additional ridge/Tikhonov regularization term allows to prove global linear
%convergence with explicit rates, see~\cite{LY13}. 
The %latter 
paper~\cite{LY13} %also
discusses extensions of the method to BPDN($\delta,\R^n$) and low-rank
matrix recovery problems, and \cite{Y10} shows that it can be viewed as gradient descent applied to a certain dual reformulation, and as such can be sped up significantly by incorporating, e.g., %Barzilai-Borwein stepsizes, 
stepsize linesearch or Nesterov's acceleration technique. Moreover, \cite{SSKSP19} provided a partial Newton method acceleration scheme for the linearized Bregman
method, extending an earlier improvement suggestion of~\cite{QLW14}
involving generalized inverse matrices.

In \cite{GO09}, a \emph{Split Bregman} formulation is proposed, which
amounts to applying the Bregman approach to an augmented Lagrangian model involving auxiliary variables, solving subproblems by alternating minimization.
%introduces auxiliary variables coupled to the original variables by linear
%equality constraints that are then moved to the objective as a
%least-squares deviation term. The Bregman approach is then applied to this
%new formulation, and the associated subproblems are solved by alternating
%minimization (instead of, e.g., FPC schemes or other solvers for
%$\ell_1$-LS-like problems). The Split Bregman algorithm was demonstrated to
%be faster than linearized Bregman iterations on several application
%problems, including BP($\R^n$). 

\paragraph{\bf Other Noteworthy Algorithmic Approaches}%\label{par:otherL1Approaches}

There are some further interesting methods that, while certainly related to
some degree, do not quite fit into the previous categories. Therefore, we
list them here.
\begin{itemize}  
\item \hypertarget{SMNewt}{}A \emph{semismooth Newton method} is proposed in~\cite{GL08} for
  $\ell_1$-LS($\lambda,\R^n$). While %shown to be 
  locally superlinearly convergent, the method %, which can be realized as an active-set algorithm,
  depends strongly on the selected starting points, 
  which is overcome by the globalization strategy described in~\cite{MU14} that, essentially, %leads to a
  %combination of ISTA and the semismooth Newton scheme, replacing steps of
  replaces iterations by ISTA steps if a certain filter-based acceptance
  criterion fails. Regularization parameter choice in the context of
  semismooth Newton methods applied to $\ell_1$-regularized least-squares
  problems is discussed in~\cite{CJK10}. Recently, \cite{BCNO16} proposed a
  unifying semismooth Newton framework that can be used to generate
  different incarnations of such second-order methods, including active-set
  and second-order ISTA schemes. Further related methods are zero-memory quasi-Newton forward-backward splitting algorithm from~\cite{BF12} (see also~\cite{BFO18}) that can also handle more general problems, or, e.g., the orthant-wise learning algorithm
  from~\cite{AG07} and the SmoothL1 and ProjectionL1 methods from
  \cite{SFR07} that also utilize (restricted) second-order information.

  %It is also worth mentioning that \cite{BF12} (see also~\cite{BFO18})
  %introduces a zero-memory quasi-Newton forward-backward splitting
  %algorithm (using diagonal$+$\mbox{rank-$1$} Hessian approximations) that
  %turned out to be a semismooth-Newton-like method in some of the cases the
  %quite general framework can handle, in particular, $\ell_1$-related
  %ones. Several related methods for $\ell_1$-regularized problems that also
  %make use of (restricted) second-order information are mentioned in
  %\cite{BF12,BFO18}, e.g., the orthant-wise learning algorithm
  %from~\cite{AG07} or the SmoothL1 and ProjectionL1 methods from
  %\cite{SFR07}.

\item \emph{Iterative Support Detection (ISD)} and the analogous \emph{threshold-ISD} \cite{WY10} target improving the solution sparsity if BP($\R^n$) or $\ell_1$-LS($\lambda,\R^n$), respectively, fail to work as intended in reconstructing a sparse signal (e.g., if the number of measurements $m$ is too small). The methods are specific reweighting schemes that iteratively solve smaller BP or $\ell_1$-LS instances, each setting to zero the objective contribution of the support of the previous instance's solution. These subproblems can, in
  principle, be tackled by any specialized solver (the authors
  of~\cite{WY10} use YALL1, i.e., an ADMM approach). Empirically, ISD is
  demonstrated to perform slightly better than the strongly related
  \emph{iteratively reweighted $\ell_1$-minimization (IRL1)} method
  from~\cite{CWB08} (which follows---and actually introduced---the same
  general idea, but uses different weights derived from the respective
  previous subproblem solution) as well as the similar \emph{iteratively
    reweighted least-squares (IRLS)} algorithm, also known as \emph{FOCUSS}
  (FOCal Underdetermined System Solver), which employs a reweighted
  $\ell_2$-norm objective, or possibly nonconvex $\ell_p$-quasinorms with
  $0<p<1$, cf.~\cite{CW08,DDVFG10,GR97,O85}.
    
\item The term \emph{Active-Set Pursuit} refers to a collection of
  algorithms based on a dual active-set QP approach to basis pursuit and
  related problems, described in~\cite{FS12}. It can also solve
  BPDN($\delta,\R^n$) and be utilized in a reweighted basis pursuit
  algorithm (solving a sequence of BP-like problems with objective
  $\norm{W^k x}_1$ with different diagonal weighting matrices $W^k$) aiming
  at further reducing the sparsity of computed solutions, similarly to ISD.

\item The \emph{polytope faces pursuit} algorithm from~\cite{P06} is a
  greedy method for the solution of the Basis Pursuit
  problem~BP($\R^n$). In essence, it proceeds by identifying active faces
  of the polytope that constitutes the feasible set of the dual of the
  standard variable-split LP reformulation \eqref{eq:l1varsplit} of
  BP($\R^n$) and adding or removing solution components one at a time. Numerical experiments show a favorable comparison against matching pursuit
  (cf.~Section~\ref{sec:inexactopt:greedy_heuristics}) and an
  interior-point LP solver for BP($\R^n$) for a few specific
  signal types.

\item The paper~\cite{CCG12} proposes to use standard algorithms for
  general convex feasibility problems to compute sparse solutions, by
  employing either cyclic or simultaneous (weighted) subgradient
  projections w.r.t.\ equality constraints $Ax=b$ (projecting onto rows
  separately) and constraints $\norm{x}_1\leq\tau$. Thus, these methods can
  be understood to asymptotically solve LASSO($\tau,\R^n$) if the combined set
  $\{x: Ax=b,~\norm{x}_1\leq\tau\}$ is nonempty, although~\cite{CCG12}
  motivates them differently and it is typically not possible to determine
  this type of constraint consistency a priori (it essentially amounts to
  optimally choosing the parameter $\tau$). Thus, the proposed algorithms
  CSP-CS and SSP-CS are not really exact solvers for a certain
  $\ell_1$-problem, but should rather be viewed as $\ell_1$-based
  heuristics. Note also that, in principle, one could project onto the
  whole feasible set $Ax=b$ directly in explicit closed form, although the
  projections onto single rows are significantly cheaper. Alternating
  projection methods for convex sets are a special case of the splitting
  methods discussed earlier, and as such also come with various convergence
  guarantees. In particular, the heuristics from~\cite{CCG12} could easily
  be extended to other problems, e.g., involving constraints like
  $\norm{Ax-b}_2\leq\delta$, by employing approximate projections such as
  those utilized in ISAL1~\cite{LPT14,T13}.
\end{itemize}

\bigskip
To conclude this section, we point out the extensive numerical comparison
for several Basis Pursuit solvers (i.e., implementations provided by the
respective authors) reported in~\cite{LPT15}; see also~\cite{KT16}. This
comparison demonstrates that the interior-point codes
$\ell_1$-\textsc{magic} and SolveBP are not competitive with other methods,
including, in particular, the respective dual simplex algorithms of
SoPlex~\cite{W96,scip} and CPLEX~\cite{cplex} applied to the variable-split
LP formulation \eqref{eq:l1varsplit}. The overall ``winner'' of this solver
comparison for BP($\R^n$) is the $\ell_1$-homotopy method based on
$\ell_1$-regularized least squares. However, recall that the more recent
work \cite{MWZ19} demonstrated that LP techniques can be made faster than
the homotopy method by integrating column and constraint generation.

Moreover, significant speed-ups and accuracy improvements can be achieved
for almost all methods by incorporating a so-called \emph{heuristic
  optimality check (HOC)}, described in~\cite{LPT15} for BP($\R^n$),
extended to BPDN($\delta,\R^n$) and $\ell_1$-LS($\lambda,X$) in~\cite{T13},
and generalized to BPDN-like problems with arbitrary norms in the
constraints in~\cite{BLT15}. It is also worth noting that the work
\cite{MD10} provides extensive parameter tuning experiments for various
iterative (hard and soft) thresholding methods and some other algorithms,
aiming at relieving users from the burden of having to select appropriate
regularization, noise- or sparsity-level parameters when using one of the
noise-aware $\ell_1$-optimization models and dedicated solvers; see also~\cite{BPY21} for recent results on parameter choice sensitivity of these models.

Numerous further papers treat more variations of the above methods and
ideas, often providing slight improvements to the originally proposed
schemes, generalizing them to a broader context, or treating much more
general optimization problems that contain one or more of the above
$\ell_1$-problems as special cases (e.g., minimization of composite convex
objective functions). For instance, quadratic/nonlinear basis pursuit is
discussed in \cite{OYVS13,OYDS13}, and so-called compressed phase retrieval
in~\cite{MRB07}. Moreover, the paper \cite{YCHL10} surveys and compares
various methods and available implementations for $\ell_1$-regularized
training of linear classifiers. Similarly to $\ell_1$-regularization in the
context of sparse signal recovery or sparse regression, the described
techniques stem from the whole range of applicable approaches, including
cyclic coordinate-descent methods, active-set and quasi-Newton schemes, and
projected (sub-)gradient algorithms.

It goes beyond the scope of this survey to further identify and remark on
possible extensions and applicable algorithms. Nevertheless, we note that 
recent modifications of the many algorithms summarized above may often be 
found simply by searching for citations of the respective original works 
referenced here. Moreover, implementations of many of the methods (usually 
in Matlab or Python) can also be found online, either prototyped directly 
by their authors or as part of more sophisticated larger software packages.
An important generalization of the $\ell_1$-norm approach (and the nuclear 
norm surrogate for matrix rank, cf.~\cite{RFP10}) is discussed in the following.

\subsection{Generalization: Atomic Norm Minimization}\label{sec:inexactopt:surrogates:atomic}
From a geometric perspective, the popular $\ell_1$-norm approach to
reconstructing sparse solutions from few linear measurements can also be
viewed as minimizing the so-called \emph{atomic norm} induced by the set of
unit one-sparse vectors; the convex hull of this \emph{atomic set}
coincides with the unit $\ell_1$-norm ball, i.e., the cross-polytope. As
laid out in~\cite{CRPW12}, this perspective yields a natural generalization
which gives rise to related convex heuristics for the recovery of sparse,
or simple'', solutions in a variety of applications: Provided the solution
in question is formed as a nonnegative linear combination of a few elements
of a (centrally symmetric, compact) atomic set~$\mathcal{A}\subset\R^n$,
the convex program $\min\{\norm{x}_\mathcal{A}:\norm{Ax-b}\leq\delta\}$,
can successfully recover it under certain assumptions, where
$\norm{x}_\mathcal{A}\define\inf\{\sum_{a\in\mathcal{A}}
c_a:x=\sum_{a\in\mathcal{A}} c_a a,~c_a\geq 0~\forall a\in\mathcal{A}\}$ is
the atomic norm. While the atomic norm may not be computable for an
arbitrary atomic set, in many cases of interest it does turn out to be
tractable or efficiently approximable; besides sparse vectors, the examples
detailed in~\cite{CRPW12} include the recovery of, e.g., low-rank matrices
(where the atomic norm reduces to the well-known nuclear norm~\cite{RFP10},
i.e., the sum of singular values), permutation or orthogonal matrices,
vectors from lists and low-rank tensors, with applications in machine
learning, (partial) ranking, or object tracking. Further applications of
the atomic norm framework cover, e.g., breast cancer prognosis from gene
expression data via a group-LASSO model with overlaps~\cite{OJV11}, linear
system identification~\cite{SBTR12}, sparse phase retrieval and sparse
PCA~\cite{MRD21}, image superresolution~\cite{CFDC20}, direction-of-arrival
estimation~\cite{YX16}, or speeding up neural network training by
sparsifying stochastic gradients~\cite{WSLCPW19}, to name but a
few. Similarly to the $\ell_1$-case, iterative reweighting can improve
solution sparsity for atomic norm minimization \cite{YX16}, and conditions
for exact or bounded-error approximate recovery from noiseless or noisy
linear measurements, respectively, can be formulated generally and for
special cases. For instance, \cite{CRPW12} provide probabilistic guarantees
in terms of the number of Gaussian linear measurements required for success
for several settings, and the very recent work~\cite{C22} gives
deterministic recovery conditions analogous to the nullspace property
(cf.~Section~\ref{sec:inexactopt:surrogates:theory}).

\subsection{Other Approximations for the Cardinality Objective}\label{sec:inexactopt:approximation_objective}
There are several works that consider replacing the cardinality objective
by other nonlinear approximations than the $\ell_1$-norm. The main reason
this is apparently less common is presumably the fact that such
approximations are almost exclusively nonconvex, yielding harder
optimization problems. For instance, the nonconvex $\ell_p$-quasinorms with
$0<p<1$ naturally tend to the $\ell_0$-norm for $p\searrow 0$ (so the
smaller~$p$, the better the approximation, generally), but while some
recoverability results similar to $\ell_1$-minimization problems can be
shown (see, e.g., \cite{C09,CS08}), the classic problem variants with such
nonconvex $\ell_p$-objectives are still (strongly)
\NP-hard~\cite{GJY11}. In both theory and practice, gains can be achieved
over $\ell_1$-minimization w.r.t.\ recoverable sparsity levels or number of
required measurements, and despite hardness and nonconvexity issues such as
the need to distinguish local from global optima (cf.~\cite{CG15} in the
present context), fast algorithms that work quite well have been developed.
For instance, \cite{MR10,BL11} describe IRLS-related or subgradient-based
descent schemes, respectively, \cite{GJY11} investigates an interior-point
potential-reduction method, and \cite{MFH11} proposes a coordinate-descent
algorithm for least-squares regression with nonconvex penalty
regularization targeting sparsity.

The paper~\cite{MBZJ09} proposes a method called SL0 (smoothed $\ell_0$)
that consists of an (inexact) projected gradient scheme applied to
maximizing the smooth functions
$F_\sigma(x)\define\sum_{i=1}^n e^{-x_i^2/(2\sigma^2)}$, for a decreasing
sequence of $\sigma$-values. Since for $\sigma\to 0$,
$e^{-x_i^2/(2\sigma^2)}\to 1-\norm{x_i}_0$, it follows that
$F_\sigma(x)\to n-\norm{x}_0$, so maximizing $F_\sigma(x)$ amounts to
approximately minimizing $\norm{x}_0$. Convergence is proven under certain
assumptions, and numerical experiments suggest superiority w.r.t.\ basis
pursuit in some settings.

The comparatively early work~\cite{M99}, published before the rise of
compressed sensing, treats the problem of finding minimum-support vertex
solutions of general polyhedral sets. In particular, it is demonstrated
under mild assumptions that $\ell_0$-regularized minimization of a concave
function over polyhedral constraints admits an optimal vertex solution, and
that there exists an exact smooth approximation of the cardinality penalty
term such that for certain finite choices of penalty parameters,
minimum-support solutions are retained. The suggested approximation is
$\norm{x}_0\approx n-\ones^\top e^{-\alpha y}$ for some (sufficiently
large) $\alpha>0$, where $e^q=(e^{q_1},\dots,e^{q_n})^\top$ for a vector
$q\in\R^n$ and $-y\leq x\leq y$.  With $X\subseteq\R^n$ describing the
polyhedral set and $f$ the concave original objective, the suggested
regularized problem thus reads
\[
  \min_{(x,y)} f(x) + \beta \ones^\top(\ones - e^{-\alpha
    y})\quad\text{s.t.}\quad x\in X,~-y\leq x\leq y,
\]
with regularization parameter $\beta\leq\beta_0$ for some $\beta_0>0$ and
penalty parameter $\alpha\geq\alpha_0(\beta)$ for some $\alpha_0(\beta)>0$.
(Note that $y$ effectively models the component-wise absolute value
of~$x$.) Special cases discussed explicitly are linear programs and linear
complementarity problems. The suggested algorithm based on this exact
penalty scheme is an application of a finitely-terminating fast successive
linearization algorithm. Adaptions of the approach from~\cite{M99} to the
problem \cardmin{$\norm{Ax-b}_\infty\leq\delta$} and a sparse portfolio
optimization problem are discussed in~\cite{JP08} and~\cite{LLRSS12},
respectively.

Based on ideas from~\cite{BB97}, \cite{JP08} discussed a heuristic for \cardmin{$\norm{Ax-b}_\infty\leq\delta$} that
builds on the equivalent bilinear reformulation
\[
  \min\ones^\top z\quad\text{s.t.}\quad b-\delta\ones\leq Ax-b\leq
  b+\delta\ones,~x_i(1-z_i)=0~~\forall i\in[n],~0\leq z\leq\ones,
\]
which is closely related to
the approach in \cite{FMPSW18}. To overcome the nonconvexity of the
bilinear (equilibrium or complementarity-type) constraints, one can move
the bilinear constraint into the objective and introduce an upper-bound
constraint for the cardinality; a sequence of subproblems can then be
solved efficiently for different objective bounds to obtain a final
solution, see \cite{JP08,BB97,BM93}. 

The connection to MaxFS/MinIISCover described in Section~\ref{sec:concreteproblems:miscellaneous:combopt} has also been exploited to derive a variety of (often LP-based) heuristics for cardinality minimization problems such as sparse signal reconstruction, subset selection, classifier hyperplane placement and others, see, e.g., \cite{C01,P08,C12,C19,FCR20,FCR22} and references therein. Numerical studies in these works suggest that such heuristics often yield better solutions than more common (e.g., greedy or $\ell_1$-norm-based) approaches, but still appear to be less widely known.

\subsection{Other Relaxations of Cardinality Constraints}\label{sec:inexactopt:relaxation_constraints}
Analogously to the reformulation of the cardinality minimization problem
mentioned in Section \ref{sec:exactopt:modelingcard}, one can reformulate
cardinality-constrained problems \cardcons{$f$}{$k$}{$X$} using
complementarity-type constraints as
\begin{equation}\label{eq:cardConstRelaxed}
  \min~f(x) \st  x \in X,~\ones^\top y \leq k,~x_i (1-y_i) = 0~\forall i\in [n],~0 \leq y \leq \ones,
\end{equation}
which was discussed in \cite{BKS16} as well as in \cite{FMPSW18, BLP14} for
cardinality minimization problems. The continuous-variable problem
\eqref{eq:cardConstRelaxed}, although being a relaxation (of~$y$ being
binary), still has the same global solutions as the original problem
\cardcons{$f$}{$k$}{$X$}. Note, however, that local solutions of
\eqref{eq:cardConstRelaxed} at which the cardinality constraint is not
active are not necessarily local solutions of \cardcons{$f$}{$k$}{$X$}.
This situation is specific to cardinality-constrained problems and does not
occur when the same type of reformulation is used for cardinality
minimization or regularization problems. We first focus on approaches that
tackle cardinality-constrained problems via the relaxed reformulation
\eqref{eq:cardConstRelaxed} with tools from nonlinear optimization.

Due to the complementarity-type constraints, the relaxed problem
\eqref{eq:cardConstRelaxed} is nonconvex and degenerate in the sense that
the feasible set does not have interior points and classical constraint
qualifications from nonlinear optimization are not satisfied.  Therefore,
it needs special care both in its theoretical analysis and in numerical
solution methods, see, e.g., \cite{CKS16, BS18} for tailored optimality
conditions. Due to the close relation of the relaxed problem to
mathematical programs with complementarity constraints (MPCCs), it is
possible to modify solution approaches for MPCCs, see, e.g., \cite{BKS16,
  BBCS17, LPR96, HKS13} and references therein. Since the
complementarity-type constraints in the relaxed problem
\eqref{eq:cardConstRelaxed} are linear\footnote{In the literature, complementarity constraints are usually of the form $0\leq g(x)\perp h(x)\geq 0$~and are called \emph{linear} if both $g$ and $h$ are affine-linear functions; the condition itself is always nonlinear.}, it is especially worth taking a
look at MPCCs with linear complementarity constraints, see
Section~\ref{sec:exactopt:modelingcard} for some references on linear
programs with complementarity constraints (LPCCs) and extensions to convex
QPs with complementarity constraints. Lately, augmented Lagrangian methods
have also become popular for degenerate problems such as MPCCs or the
relaxed problem \eqref{eq:cardConstRelaxed}, because they can be applied
directly without specialization, see \cite{ISU12, KRS2020, KRS2021}.

In \cite{XS20}, an ADMM was designed for the relaxed reformulation of the
cardinality regularization problem \cardreg{$\rho$}{$Ax \geq b$}, and the
authors of \cite{YG16} use the observation~\eqref{cardconsYG16}, i.e., that
\begin{equation*}
  \norm{x}_0 \leq k
  \quad \Leftrightarrow \quad  \norm{u}_1 \leq k,~\norm{x}_1 = x^\top u,~-\ones \leq u \leq \ones,
\end{equation*}
which is closely related to the reformulation used in
\eqref{eq:cardConstRelaxed}, as the basis for an alternating exact penalty
method and an alternating direction method. The central idea used in such
alternating methods (sometimes also called splitting or decomposition
methods) is to separate the considered problem into two (or more) parts
such that each individual problem is tractable. The precise methods then
differ with regards to which problem is considered, how exactly it is
split, how the resulting subproblems are coupled, and how they are solved
individually. To illustrate the basic idea for the relaxed problem
\eqref{eq:cardConstRelaxed}, let us assume that the objective function $f$
can be written as $f(x) = f_C(x) + f_N(x)$ with a convex function $f_C$ and
a nonconvex function $f_N$. Further, assume that the feasible set $X$ is
convex. Then, \eqref{eq:cardConstRelaxed} can be stated equivalently as
\begin{align*}
  \min_{(x,y), (v,w)}\quad &f_C(x) + f_N(v)\\
  \text{s.t.}\quad &x \in X,~\ones^\top y \leq k,~0 \leq y \leq \ones,\\
  &v_i (1-w_i) = 0 \quad \forall i\in [n],\\
  &(x,y) = (v,w).
\end{align*}
We can move the coupling condition $(x,y) = (v,w)$ to the objective using a
(say) least-squares penalty term and obtain
\begin{align*}
  \min_{(x,y), (v,w)}\quad &f_C(x) + f_N(v) + \alpha \norm{(x,y) - (v,w)}_2^2\\
  \text{s.t.}\quad &x \in X,~\ones^\top y \leq k,~0 \leq y \leq \ones,\\
  &v_i (1-w_i) = 0 \quad \forall i\in [n],
\end{align*}
with some penalty parameter $\alpha > 0$. For fixed values of $(v,w)$, this
problem is convex with respect to $(x,y)$. But for fixed values of $(x,y)$,
the problem is \emph{not} convex w.r.t.\ $(v,w)$ due to the
complementarity-type constraints and the potentially present nonconvex part
$f_N$ of the objective function. Nevertheless, in case $f$ is convex and
thus $f_N \equiv 0$, solving the minimization problem with regards to
$(v,w)$ reduces to projecting $(x,y)$ onto the set of points $(v,w)$ with
$v_i (1-w_i) = 0$ for all $i\in [n]$, for which a closed-form solution is
available. A closed-form solution for a nonconvex, but quadratic function
$f_N$ is given in \cite{XS20}. Moreover, recall that ADMM schemes are also
popular for convex $\ell_1$-based models,
cf. Section~\ref{sec:inexactopt:surrogates:algorithms}, or for nonconvex
tasks like dictionary learning, where the decomposed problem may not always
have closed-form solutions but can often be quickly solved approximately by
iterative schemes, see, e.g., \cite{TEM16,LTYEP21}. Thus, one can alternate
between solving the optimization problem over just $(x,y)$ and just
$(v,w)$, respectively. While such alternating minimization schemes often
work well in practice, it can be nontrivial to actually prove convergence.

Recently, one can also see some efforts to develop a unified theory for
several classes of complementarity-type constraints including those in the
relaxed problem \eqref{eq:cardConstRelaxed}, see for example \cite{BG18,
  BCH21}. In the future, those could give rise to new, more flexible
solution approaches. The basic idea here is to consider a more general
class of optimization problems with disjunctive constraints, i.e., where
the feasible set can be represented not only via intersections but also
unions of sets.  In fact, the resulting theory can be applied to the
relaxed problem \eqref{eq:cardConstRelaxed} but also directly to
cardinality-constrained problems \cardcons{$f$}{$k$}{$X$}, because the set
$\{x \in \R^n : \norm{x}_0 \leq k\}$ can be written as the union of
finitely many $k$-dimensional subspaces of $\R^n$.

Next, we describe some approaches that consider the cardinality-constrained
problem \cardcons{$f$}{$k$}{$X$} directly and employ methods from nonlinear
optimization. For the problem \cardcons{$f$}{$k$}{$\R^n$}, i.e.,
\begin{equation*}
  \min_x~f(x) \st \|x\|_0 \leq k
\end{equation*}
without additional constraints, several optimality conditions---such as
coordinate-wise optimality---are introduced in \cite{BE13} and then used to
analyze the convergence properties of an iterative hard thresholding
algorithm and an iterative greedy simplex-type method. In \cite{BH16}, this
approach is extended to allow closed convex feasible sets
$X \subseteq \R^n$ and efficient methods to compute the projection onto the
sparse feasible set\footnote{Note that, unlike projection onto the
  $k$-sparse set $\{x\in\R^n:\norm{x}_0\leq k\}$, projection onto the
  $k$-\emph{cosparse} set $\{x\in\R^n:\norm{Bx}_0\leq k\}$, with
  $B\in\R^{p\times n}$, is \NP-hard \cite{TGP14} (see also \cite{T19}).}
(i.e., $\{x\in X:\norm{x}_0\leq k\}$) are presented. Further
generalizations to regularized cardinality problems \cardreg{$\rho$}{$X$}
and to group sparsity can be found in \cite{BH18, BH19}. 

More optimality conditions based on various tangent cones, normal cones and
restricted normal cones can be found in \cite{PXZ15, BLPW14, BLPW13a,
  BLPW13b, LZ13}. In addition to developing these optimality conditions,
the authors also apply them to analyze the convergence properties of an
alternating projection method for \cardmin{$Ax=b$} and a penalty
decomposition method for \cardcons{$f$}{$k$}{$X$} and
\cardreg{$\rho$}{$X$}, see also
Section~\ref{sec:inexactopt:greedy_heuristics}. The authors of \cite{LLS21}
employ a similar penalty decomposition method for
\cardcons{$f$}{$k$}{$\R^n$} with an emphasis on possibly nonconvex
objective functions $f$, and in \cite{TYYS17} a penalty decomposition-type
algorithm is tailored to cardinality-constrained portfolio
problems. Similar optimality conditions also form the basis of \cite{HM20},
where the authors consider regularized linear regression problems and
combine a cyclic coordinate descent algorithm with local combinatorial
optimization to escape local minima. It is also worth mentioning that~\cite{AGH21} formulate an iterative convex relaxation method for \cardcons{$\norm{b-x}_2^2+\norm{Bx}_2^2$}{$k$}{$\R^n_{\geq 0}$}, where $B$ encodes adjacency relations of entries in~$x$ (e.g., neighboring pixels in an image), which is based on a non-relaxed complementarity-type MIQP model and perspective reformulation; this scheme is shown to significantly outperform standard $\ell_1$-techniques for such problems.

So-called \emph{difference of convex functions (DC)} approaches, see, e.g.,
\cite{ZSLS14, GTT18, LDLV15, LM14} and the many references therein, utilize
the fact that most nonconvex objective functions $f(x)$ occurring in
real-life applications can be written as a difference of two convex
functions $f(x) = g(x) - h(x)$. It should be noted that the DC formulation
of a function is generally not unique and that different formulations can
have different properties. The DC formulation can then be exploited
algorithmically, e.g., by replacing the function $h$ with an affine
approximation, which results in a convex objective function. For the
cardinality-constraint $\norm{x}_0 \leq k$, there exist several DC
formulations, e.g.,
\[
  \norm{x}_1 - \norm{x}_{1,k}  = 0
  \quad \text{or} \quad
  \norm{x}_2^2 - \norm{x}^2_{2,k} = 0,
\]
where $\norm{x}_{1,k}$ and $\norm{x}_{2,k}$ denote the largest-$k$ norms of
the vectors $x$, meaning the $\norm{\cdot}_1$- or $\norm{\cdot}_2$-norm
applied to the $k$ largest components (in absolute value) of $x$,
respectively. Such DC reformulations can be used in all classes of COPs; in
case of cardinality-constrained problems, a penalty formulation is often
used to move the cardinality term into the objective function. 

Finally, it is worth mentioning that \cardcons{$\tfrac{1}{2}\norm{Ax-b}_2^2$}{$k$}{$\R^n$} can be approximated by the so-called \emph{trimmed LASSO} (cf.~\cite{ABN21,BCM17})
\[
\min \tfrac{1}{2\lambda}\norm{Ax-b}_2^2+\norm{x}_{1,k}.
%\min\{\tfrac{1}{2\lambda}\norm{Ax-b}_2^2+\min\{\norm{\phi-x}_1:\norm{\phi}_0\leq k\}\}.
\]
This problem actually solves the cardinality-constrained least-squares problem exactly for sufficiently large~$\lambda$, is related to a variety of other LASSO-like problems, and it as well as closely related variants can be solved by several algorithmic techniques including ADMM and DC programming, see \cite{HG14,ABN21,BCM17,GTT18} and references therein.

\subsection{Greedy Methods and Other Heuristics}\label{sec:inexactopt:greedy_heuristics}
There is a large number of further algorithmic approaches that have been
adapted to cardinality minimization or cardinality-constrained
problems. Broadly speaking, these methods are mostly greedy schemes or
based on algorithmic frameworks originating in convex optimization. Other
broad families of heuristics such as evolutionary algorithms or randomized
search can also be found, but are apparently much less common in the
context of cardinality optimization problems. Since, moreover, such methods
are typically highly application-specific (for example, portfolio
optimization has been addressed by means of clustering and local relaxation
\cite{MS12}, particle swarm schemes \cite{DLL12}, genetic algorithms,
simulated annealing, and tabu search \cite{CMBS00}, and even neural
networks \cite{FG07}), we do not delve into the details in this paper.

\paragraph{\bf Hard Thresholding}\label{par:hardThresholding}
The papers \cite{BD08b,BD09a} introduce the \emph{iterative
  hard-threshold\-ing algorithm (IHT)} alluded to earlier, and prove
convergence of the algorithm iterates to local minima as well as error
bounds under certain conditions (e.g., the RIP).  For
\cardreg{$\tfrac{1}{\lambda}\norm{Ax-b}_2^2$}{$\R^n$}, the IHT iteration
(starting at $x^0\define 0$) reads
\begin{equation}\label{eq:IHTiteration}
  x^{k+1} \define \cH_{\sqrt{\lambda}}\left(x^k + A^\top\left(b-Ax^k\right)\right),
\end{equation}
where $\cH_{\sqrt{\lambda}}(\cdot)$ is the hard-thresholding operator,
defined component-wise as
\[
  \cH_{\epsilon}(z_i)\define\begin{cases} \hfill
    0,&\abs{z_i}\leq\epsilon\\ ~z_i,&\abs{z_i}>\epsilon.\end{cases}
\]
A similar iterative scheme is also proposed and analyzed in~\cite{BD08b}
for the cardinality-constrained $\ell_2$-mini\-mi\-zation problem
\cardcons{$\norm{Ax-b}_2^2$}{$k$}{$\R^n$}, called the \emph{$k$-sparse
  algorithm} there. The iterations are completely analogous
to~\eqref{eq:IHTiteration} except that $\cH_{\sqrt{\lambda}}(\cdot)$ is
replaced by the operator $\cH^0_{k}(\cdot)$, which retains the $k$ largest
absolute-value entries. %The IHT method also works for complex-valued
%variables. 
The papers~\cite{BD08b,BD09a} also discuss connections and
similarities of IHT and matching pursuit algorithms like OMP and CoSaMP
(outlined further below).

In~\cite{BTW15}, the IHT approach is combined with the conjugate gradient
principle to the \emph{CGIHT} algorithm. That work also provides
probabilistic recovery and stability guarantees for CGIHT variants (applied
to \cardcons{$\norm{Ax-b}_2^2$}{$k$}{$\R^n$}, joint-sparsity, and matrix
completion problems) as well as empirical phase transition diagrams, and
demonstrates computational advantages over IHT and several variations. For
brevity, we refer to \cite{BTW15} for an overview of these accelerated or
modified IHT variations, including the corresponding references.

Further closely related methods are (gradient) hard thresholding
pursuit~\cite{F11,YLZ18} and the general directional pursuit framework
from~\cite{BD08a} (although the latter is arguably more related to matching
pursuit schemes, which is the viewpoint in that paper).
Finally, the IHT method was extended to the case of group sparsity in~\cite{BKS21}.
    
\paragraph{\bf Matching Pursuit}\label{par:matchingPursuit}
Matching pursuit algorithms were originally developed to approximate a
signal (function) with relatively few atoms from a given overcomplete
dictionary, and can also be applied to obtain sparse approximate solutions
to underdetermined linear systems, i.e., to approximately solve
\cardcons{$\tfrac{1}{2}\norm{Ax-b}_2^2$}{$k$}{$\R^n$} or similar problems.
The basic \emph{matching pursuit} (MP) algorithm from~\cite{MZ93}
iteratively selects one column from~$A$ at a time, namely one that has
highest correlation with the residual $b-A\tilde{x}$, where $\tilde{x}$ is
zero except in the components corresponding to previously selected columns,
where the coefficients achieving the maximal residual-norm reduction in the
respective iteration are stored. The \emph{orthogonal matching pursuit}
(OMP) algorithm~\cite{PRK93} updates all coefficients of previously chosen
columns at each iteration rather than keeping them at their initial values
as in MP, thereby allowing for better approximations that potentially use
fewer columns.

There are many further variants that build on the general (O)MP greedy
principle and introduce different tweaks to improve the algorithmic
performance and/or achieve better solution quality and sparse recovery
guarantees under certain conditions: OMPR~\cite{JTD11} (\emph{OMP with
  replacement}) is an OMP variant that also allows for removal of
previously chosen columns from the solution support being constructed.  The
method is a special instantiation of a more general class of algorithms
that also generalizes, e.g., hard thresholding pursuit~\cite{F11}.  The
CoSaMP~\cite{NT09} algorithm (\emph{compressive sampling MP}) combines the
OMP idea with techniques from convex relaxation and other methods,
essentially iterating through residual updates with thresholding,
least-squares solution approximation on the estimated support, and further
thresholding. Similarly, StOMP~\cite{DTDS12} (\emph{stagewise OMP})
generalizes OMP by performing a fixed number of ``stages'' consisting of
obtaining support estimates by hard thresholding and updating the solution
estimate and residual based on the current support estimate in a
least-squares fashion; its analysis is focused on special choices of~$A$
with columns randomly generated from the unit sphere. \emph{Stagewise weak OMP} (SWOMP)~\cite{BD09b} uses thresholds based on the
maximal absolute value of entries in $A^\top r^k$ (w.r.t.\ the current
residual $r^k=b-Ax^k$) instead of the residual $\ell_2$-norm, and,
similarly to StOMP, proceeds in stages during which multiple elements are
added to the support estimate rather than one at a time (as in
OMP). Finally, ROMP~\cite{NV09} (\emph{regularized OMP}) groups the
elements of $A^\top r^k$ into sets of similar magnitude, then selecting the
set with largest $\ell_2$-norm and updating the signal estimate on the
corresponding support. Further papers on MP variants include \cite{JP08}
and \cite{WKS12} (generalized OMPs), \cite{CP19} (blended MP),
\cite{ASKAE16} (reduced-set MP), \cite{LKTJ17,LRKRSSJ18} (relating MP to
Frank-Wolfe and coordinate descent, respectively), and \cite{YGWX18}
(sparsity-adaptive MP), to name just a few. The algorithm from~\cite{N95}
can also be interpreted as a reduced-order MP method, cf.~\cite{P06}.

\paragraph{\bf Other Pursuit/Greedy Schemes}\label{par:otherPursuitGreedySchemes}    
The \emph{subspace pursuit} algorithm introduced in~\cite{DM09} (for
\cardmin{$\norm{Ax-b}_2\leq\delta$}, in concept if not directly) borrows
its idea from the so-called $A^*$ order-statistic algorithm known in coding
theory. In essence, it iteratively selects a fixed number of columns from
the measurement matrix $A$ that have high correlation with $b$ as the span
for a candidate subspace to contain the sought solution. The chosen column
subset is then updated/refined based on certain reliability criteria. The
method is similar to, but different in detail from, the matching pursuit
algorithms ROMP~\cite{NV09} and CoSaMP~\cite{NT09}. The paper~\cite{DM09}
also provides recovery and error guarantees based on RIP conditions, as
well as some simulation results comparing against OMP~\cite{PRK93}, ROMP,
and linear programming for BP($\R^n$).
    
The paper~\cite{BRB13} proposes and analyzes the \emph{gradient support
  pursuit (GraSP)} algorithm, which can be seen as a generalization of
CoSaMP~\cite{NT09} to the problem \cardcons{$f(x)$}{$k$}{$\R^n$} of finding
sparse solutions for generic cost functions $f:\R^n\to\R$. The GraSP method
iterates through computing gradients (or certain restricted subgradients,
in case~$f$ is nonsmooth) and thresholding their support, then
minimizing~$f$ over the joint support of the previous iterate and the
thresholded (sub)gradient, and finally retaining the~$k$ largest
absolute-value components of that solution to build the next iterate.
Several variants are discussed, including one that replaces the inner
minimization by a restricted Newton step. Reconstruction guarantees are
obtained w.r.t.\ newly introduced conditions (stable restricted Hessian or
stable restricted linearization, for smooth or nonsmooth~$f$,
respectively), and experimental results are provided for an application of
variable selection in logistic regression.

Finally, a greedy method based on relaxing an exact MIQP formulation of \cardcons{$\tfrac{1}{n}\norm{Ax-b}_2^2+\lambda\norm{x}_2^2$}{$k$}{$\R^n$} was put forth in~\cite{XD20}, along with approximation error bounds that do not require common conditions such as the RIP, and two further randomized variants. The paper also describes how to apply the algorithms to sparse inverse covariance estimation.

\paragraph{\bf Alternating Projections}
The papers \cite{HLN14,BLPW14} consider a reformulation of \cardmin{$Ax=b$}
as the feasibility problem
\[
  \text{find }x\in \{x:Ax=b\}\cap\{x:\norm{x}_0\leq k\}\eqqcolon X\cap \Sigma_k,
\]
noting that solutions coincide for the optimal choice of~$k$.  (In fact,
any solution for the feasibility problem is clearly an optimal solution of
\cardcons{$\norm{Ax-b}$}{$k$}{$\R^n$}, for any norm $\norm{\cdot}$.)  They
analyze the method of alternating projections, iterating by alternatingly
projecting onto $X$, a closed convex set with unique closed-form
(Euclidean) projection, and $\Sigma_k$, a non-convex set onto which one can
nevertheless project in a well-defined manner by means of
$\cH^0_{k}(\cdot)$. Thus, unlike the previously discussed work~\cite{CCG12}
(cf. Section~\ref{sec:inexactopt:surrogates:algorithms}), here, the focus
is on the actual sparsity instead of its $\ell_1$-norm surrogate. Local and
global convergence results are given for the alternating projection
algorithm in \cite{BLPW14} and \cite{HLN14}, respectively; the latter also
proposes a Douglas-Rachford splitting algorithm for the above feasibility
problem, establishing (local) convergence results for that method as well.

\paragraph{\bf Constrained Sparse Phase Retrieval}
For the cardinality-constrained sparse phase retrieval problem
\cardcons{$\norm{\abs{Ax}^2-b}_2^2$}{$k$}{$\R^n$}, a greedy method called
GESPAR is introduced in~\cite{SBE14}. It combines a local search (2-opt)
heuristic with an efficient damped Gauss-Newton method to minimize the
objective when variables are restricted to a candidate support. The GESPAR
algorithm invokes this scheme for different random initializations to
mitigate the impact of the local search getting stuck in local minima.

Other popular algorithms for
\cardcons{$\norm{\abs{Ax}^2-b}_2^2$}{$k$}{$\R^n$} are gradient-descent-like
methods based on the phase retrieval algorithm known as \emph{Wirtinger
  Flow} (WF) \cite{CLS15}; see its various variants such as truncated
WF \cite{CC15} or, for the amplitude-based formulation with $\abs{Ax}$
instead of $\abs{Ax}^2$, reshaped WF \cite{ZL16}. In particular, the
\emph{Thresholded WF}~\cite{CLM16} combines the basic WF iteration with
soft-thresholding w.r.t.\ adaptively defined threshold parameters; see also
the recent \emph{Sparse WF}~\cite{YWW19}, a hard-thresholding WF scheme.
The \emph{sparse truncated amplitude flow (SPARTA)} algorithm
from~\cite{WGCA17} is designed for the formulation
\cardcons{$\norm{\abs{Ax}-b}_2^2$}{$k$}{$\R^n$}, and resembles the method
from~\cite{YWW19} in that its iterations are also of an adaptive hard-thresholded
gradient-descent type (but differ due to the slightly different initial
problem formulation). %The SPARTA method combines these iterations with a
%support-restricted spectral initialization and adaptive threshold
%selection. Recovery guarantees for sufficiently sparse solutions are given
%for Gaussian measurements with high probability.
Typically, these WF-like algorithms require sophisticated (spectral)
initialization procedures to achieve certain convergence/success
guarantees, usually shown to hold with high probability in case $A$ is a
Gaussian random matrix.

Note that both \cite{YWW19} and \cite{WGCA17} write the cardinality
constraint in equality form, i.e., $\norm{x}_0=k$ rather than
$\norm{x}_0\leq k$. Nevertheless, owed to hard-thresholding, the
algorithms effectively do not distinguish between the equality and
inequality versions.
    
\paragraph{\bf Greedy Heuristic for Cardinality Minimization With
  Constant-Modu\-lus Constraints}
Besides the dedicated branch-and-cut MINLP solver briefly mentioned
earlier, the paper \cite{FHMPPT18} also introduces an effective randomized
greedy-like heuristic for the constant-modulus constrained cardinality
minimization problem\linebreak
\cardmin{$\norm{Ax-b}_2\leq\delta$,\,$\abs{x}\in\{0,1\}$,\,$x\in\C^n$}. It
proceeds by iteratively increasing the solution cardinality, randomly
initializing a vector with the current cardinality, and then (for each
sparsity level) evaluating a large but fixed number of random entry
modifications obeying the constant-modulus constraint, updating the
solution if the residual (w.r.t.\ the measurements) decreases, until
eventually the $\ell_2$-norm constraint bound is satisfied or the sparsity
level reaches~$n$. It seems conceivable that this heuristic idea could be
adapted to other cardinality minimization problems such as
\cardmin{$\norm{Ax-b}\leq\delta$} for various norms $\norm{\cdot}$, but to
the best of our knowledge, this has not yet been considered in the
literature.

\section{Scalability of Exact and Heuristic Algorithms}\label{sec:scalability}

A question that is both academically interesting and of practical importance is how well the different algorithms handle larger problem dimensions.
By design, exact solution algorithms as those presented in Section~\ref{sec:exactopt} strive not only to compute a solution but also to prove global optimality for this solution.
This quality assurance typically comes with a longer computation time, which can make exact solution algorithms infeasible for large-scale applications.
However, exact algorithms rooted in mixed-integer programming---like most of those we discussed---provide computational error bounds throughout the solution process.
Since MIP solvers often find very good solutions quickly (and spend most of the longer running time establishing, or proving, optima\-lity), it can thus be a viable strategy to terminate an exact algorithm prematurely, trading time for \emph{quantifiable} suboptimality. 
Heuristics (see Section~\ref{sec:inexactopt}), on the other hand, are \emph{designed} to compute good solutions fast and efficiently, which generally makes them more accessible for large-scale problems, albeit with the downside that the solution quality may fluctuate. The same can be said about the approach to solve (exactly or approximately) an easier surrogate problem or relaxation, e.g., the po\-pu\-lar $\ell_1$-norm methods also discussed in Section~\ref{sec:inexactopt}. Note that several such heuristics or model approximations also come with quality guarantees under certain conditions, but such conditions often may not hold in practice or are hard to verify.

Thus, when deciding which solution algorithm to use, one should not only take the problem dimension into consideration, but also how critical computation time is for the considered application, and how important it is to compute (provably) good solutions.
Note that these aspects are indeed highly application-specific: The same problem, in comparable dimensions, can come with completely different requirements on its solution, which may, in particular, forestall claiming any one method as ``the best'' for some problem. For instance, one may be interested in an actually optimal solution for a feature reduction task (say, \cardmin{$\norm{Ax-b}_2\leq\delta$}) and willing to spend significant computational resources to obtain it. In the context of dictionary learning for sparse coding, the same problem type may be encountered as a subproblem that needs to be solved repeatedly, preferably very quickly, with no strict requirements on the solution accuracy---then, it can already be satisfactory to merely take a single improvement step of some heuristic method or with respect to, e.g., the usual $\ell_1$-relaxation. 

Besides these fairly general observations, there are also several more technical points that can make it tricky to compare the performance and scalability of solution algorithms, especially when purely consulting published numerical results.
To illustrate some of these, let us again consider the well-known sparse regression problem \cardcons{$\norm{Ax-b}_2$}{$k$}{$\R^n$}, which depends on a matrix $A \in \R^{m \times n}$, a vector $b \in \R^m$ and the sparsity level $k \in \N$.
How hard it is to solve an instance of this problem depends not only on the number of variables $n$ (the ``primary'', ambient dimension), but, in fact, on \emph{all} problem size parameters $n,m,k$ and the relations between them. For example, in \cite{MBN19}, the problem is solved with exact algorithms for $n \in \{500, 1000\}$ and $k \in \{5,10,15\}$, and all considered methods show higher computation times and higher failure rates for larger values of $k$; see also \cite{BNCM16,BVP17} for more numerical experiments that also take into account the relation between $n,m,k$, and compare exact and heuristic solution algorithms. Additionally, the noise level encoded in the data $A,b$ (measurement noise or other data uncertainty) can affect the quality of the solutions computed with different solution algorithms, see, e.g., \cite{BKM16} for some empirical insight. The density of~$A$ may also be relevant, though it can only be controlled in certain applications; generally, sparser~$A$ allows for larger problems to be tackled due to enabling numerical speed-ups in, e.g., matrix-vector multiplication (often the computational bottleneck in first-order heuristic iterations) or linear programming (which is the backbone of modern MIP solvers). Moreover, the original problem is sometimes modified in order to be able to solve larger problem instances, e.g., by inserting an additional regularization term, see, e.g., \cite{BKM16,XD20}. For instance, in \cite{BVP17} the problem \cardcons{$\norm{Ax-b}_2^2 + \tfrac{1}{\lambda}\norm{x}_2^2$}{$k$}{$\R^n$} with an added Tikhonov regularization term is solved with an exact algorithm for $n \in \{50\,000, 100\,000, 200\,000\}$ and $k \in \{10, 20, 30\}$. It may be tempting to compare these scales to, e.g., those from~\cite{MBN19} mentioned earlier, but then one must keep in mind that the underlying model has been changed, so that the solutions not necessarily coincide. 

Arguably, a truly fair comparison of different methods also requires that the same test instances are used for the numerical evaluation. In some applications, widely-used benchmark data sets exist (e.g., for classification and other machine learning tasks many can be found in the online repository~\cite{DG17}) and implementation source code is often made publicly available, while in others, such a ``spirit of reproducibility'' may not be as commonly (or possibly, not as easily) adhered to, see, e.g., \cite{MHH18}, and references therein, for a broader discussion touching upon several disciplines. For random synthetic data, as is often encountered in, e.g., compressed sensing, comparison of numerical results across different works is still viable as long as the problem dimension parameters, probability distributions of the data, and noise levels are the same, or at least very comparable. However, regrettably often, numerical experiments employ (random) data with scale and sparsity parameters that may not allow a direct comparison to other works, consider only a selection of a few existing methods that may not reflect the state-of-the-art, and rarely test the scalability limits with respect to any of the relevant parameters or their relations. Moreover, algorithms are often prototyped by their respective authors for a few experiments that demonstrate their potential in some way, but are rarely tuned or implemented in a way that would allow them to reach their true potential. This may further complicate comparison and interpretation of numerical experiments from published literature, especially if the code is not made public and the actual implementation of some algorithm being discussed thus remains opaque. Even with published come, one may occasionally notice that elementary parts could be implemented much more efficiently, and generally has to deal with different programming languages as well. Finally, algorithmic parallelization capabilities should, in principle, also be taken into account (but introduce a host of potential new difficulties for comparisons), and of course, at least w.r.t.\ solution times, any fair comparison would require the respective algorithms to be run on the same machine. 

Thus, there are indeed many reasons for the apparent lack of ``ideal'' comparability of results in the literature, some more of which are the aspects discussed at the beginning of this section. Moreover, scalability may simply not be sufficiently relevant to an application context (e.g., if an application only ever yields problems with up to, say, a hundred variables, it does not really matter in that context whether problems with several thousand variables could also be solved, even though it might for other applications), or the sensitivity of a solution approach to algorithmic parameters might not be properly taken into account either w.r.t.\ different applications or when setting default values (so that performance may be unreliable on new data sets with different problem parameters). Finally, the term ``large-scale'' can also have very different meanings in different contexts. While in high-dimensional statistics or machine learning, the large-scale regime may encompass problems with several hundreds of thousands of variables, where even heuristic approaches may be slow or challenge memory limits on standard computers, in signal processing, a few hundred or a few thousand variables are often already considered large-scale, and even relatively generic exact methods may still work quite well. Similarly to a point made earlier, these contrasting meanings may even pertain to the same underlying problem formulation.

For the reasons laid out above, we do not include a list of ``problem sizes'' that can be solved with exact or heuristic methods for various problem classes in this survey. Generally, one can say that efficiently implemented heuristic or relaxation-based methods can ``often'' handle problem sizes with several thousand variables (see, e.g., \cite{LPT15} for various $\ell_1$-solvers), and up to hundreds of thousands of variables in extreme cases, e.g., \cite{JTD11,DMW19}, and that problem-specific exact mixed-integer programming algorithms are ``often'' efficient for problems with a few hundred variables up to a few thousand variables (e.g., \cite{MBN19,T19}), and can sometimes even be pushed to yield at least near-optimal solutions for problems with up to hundreds of thousands of variables in reasonable time (e.g., \cite{BVP17,HMR21b,MWZ19}). Also, generally, the sparser the solution (the smaller~$k$), the larger the problem size ($n$) that can be solved exactly, and problems that are convex (except for the cardinality part) are typically easier than closely related nonconvex ones. However, we emphasize that for specific problems in specific applications, it is hard to pinpoint any one method as the best, or the most scalable, and that this needs to be determined on a case by case basis, taking all the points mentioned above into consideration---at least as long as there is no truly comprehensive and fair computational study encompassing various applications and considering various problem size parameter combinations (which seems a daunting task to accomplish indeed).

\section{Conclusion and Final Remarks}\label{sec:conclusion}
In this paper, we have surveyed the vast literature that deals with
algorithmic approaches for optimization problems in which the cardinality
of a set of continuous variables has to be limited. This happens in a
variety of domains in the attempt of controlling the sparsity of the
solutions to those optimization problems because, for a number of reasons
that include, e.g., explainability, robustness, and easiness of
realization, sparse solutions are considered especially valuable.

More specifically, the paper attempted to discuss in a unified way
approaches that have been developed (and sometimes rediscovered with
different names) in several domains of applications. We gave particular
attention to three of those domains---namely, statistics and machine
learning, finance, and signal processing---but we also covered several
other connected areas (cf.\ Figure \ref{fig:applications_mindmap}), mainly
led by the types of models and algorithms we discussed.

We consider our effort as an initial but necessary and significant step in
the direction of consolidating and advancing the knowledge on formulations
and algorithms for solving this vast and fundamental class of optimization
problems. A further step in the same direction could come through a
comparison of software implementations of the algorithms surveyed in this
paper, so as to analyze and establish the difference in performances (accuracy and scalability) depending on data and contexts. This is a concrete major research goal,
though, admittedly, difficult to achieve (cf.~Section~\ref{sec:scalability}). Finally, throughout the paper, we
also pointed out several smaller ideas that, to the best of our knowledge,
have not been explored yet but seem worth investigating. We hope they may
provide viable research directions for the interested reader.

%\section*{Acknowledgements}
%We thank the anonymous referees for their useful remarks and suggestions, which helped improve the paper.
    
\bibliographystyle{siamplain}
% \bibliography{cardinality_opt_survey}

\end{document}